\DeclareMathAlphabet{\mathpzc}{OT1}{pzc}{m}{it}
\newcommand{\R}{\mathbb{R}}
\newtheorem{teo}{Theorem}[section]
\newtheorem{Main}{Theorem}
\newtheorem{proposition}[teo]{Proposition}
\newtheorem{example}[teo]{Example}
\newtheorem{Rem}[teo]{Remark}
\newtheorem{definition}[teo]{Definition}
\newtheorem{lemma}[teo]{Lemma}
\def\N{\mathbb N}
\def\SS{\mathbb S}
\def\H{\mathbb H}
\def\1{\mathbf 1}
\def\k{\mathbf k}
\def\j{\mathbf j}
\def\i{\mathbf i}
\begin{document}
\title{Results on the homotopy type of the spaces of locally convex curves on $\SS^3$}
\author{
Emília Alves \and Nicolau C. Saldanha }

\maketitle

\begin{abstract}

A curve $\gamma: [0,1] \rightarrow \SS^n$ of class $C^k$ ($k \geqslant n$)
is locally convex if the vectors
$\gamma(t), \gamma'(t), \gamma''(t), \cdots, \gamma^{(n)}(t)$
are a positive
basis to $\mathbb{R}^{n+1}$ for all $t \in [0,1]$.
Given an integer $n \geq 2$ and $Q \in \mathrm{SO}_{n+1}$,
let $\mathcal{L}\SS^n(Q)$ be
the set of all locally convex curves $\gamma: [0,1] \rightarrow \SS^n$
with fixed initial and final Frenet frame
$\mathcal{F}_\gamma(0)=I$ and $\mathcal{F}_\gamma(1)=Q$.
Saldanha and Shapiro proved that there are just finitely many non-homeomorphic spaces among $\mathcal{L}\mathbb{S}^n(Q)$ when $Q$ varies in $\mathrm{SO}_{n+1}$ (in particular, at most $3$ for $n=3$).
For any $n \geqslant 2$, one of these spaces is proved to be homeomorphic
to the (well understood) space of generic curves (see below),
but very little is known in general about the others.
For $n=2$, Saldanha determined the homotopy type of the spaces $\mathcal{L}\mathbb{S}^2(Q)$. The purpose of this work is to study the case $n=3$.
We will obtain information on the homotopy type
of one of these two other spaces,
allowing us to conclude that
none of the connected components of $\mathcal{L}\mathbb{S}^3(-I)$
is homeomorphic to a connected component of the space of generic curves.

\end{abstract}

\section{Introduction}

A curve $\gamma:[0,1] \rightarrow \SS^3$ of class $C^k$ ($k \geq 3$) is called \emph{locally convex} if its geodesic torsion is always positive, or equivalently, if $ \mathrm{det}(\gamma(t),\gamma'(t),\gamma''(t), \gamma'''(t)) > 0 $ 
for all $t$. For $Q \in \ \mathrm{SO}_4$, let $\mathcal{L}\SS^{3}(Q)$ be the set of all locally convex curves $\gamma$ with $\gamma(0) = e_1$, $\gamma(1) = Qe_1$, $\gamma'(0) = e_2$, $\gamma'(1) = Qe_2$ and $\gamma''(0) = e_3$, $\gamma''(1) = Qe_3$.  
Shapiro and Anisov proved that $\mathcal{L}\SS^3(-I)$
(where $I$ is the identity matrix)
has three connected components, that we denote by $\mathcal{L}\SS^3(\1,-\1)_c$, $\mathcal{L}\SS^3(-\1,\1)$ and $\mathcal{L}\SS^3(\1,-\1)_n$, where $\mathcal{L}\SS^3(\1,-\1)_c$ is the set of convex curves, which is contractible
(this notation will be clarified later).
Our aim is to understand the two other spaces.
Even though we do not have a complete answer yet,
in this work we present new partial results.

The space $\mathcal{L}\SS^3(I)$ has two connected components:
$\mathcal{L}\SS^3(\1,\1)$ and $\mathcal{L}\SS^3(-\1,-\1)$.
For \[Q_0 =
\begin{pmatrix}
-1 & 0 & 0 & 0\\
0 & -1 & 0 & 0 \\
0 & 0 & 1 & 0 \\
0 & 0 & 0 & 1
\end{pmatrix},\]
the space $\mathcal{L}\SS^3(Q_0)$ has two connected components $\mathcal{L}\SS^3(\mathbf{i},-\mathbf{i})$ and $\mathcal{L}\SS^3(-\mathbf{i},\mathbf{i})$.
It follows from \cite{SS12} that for, any $Q \in \mathrm{SO}_4$,
the space $\mathcal{L}\SS^3(Q)$ is homeomorphic to one of these: 
$\mathcal{L}\SS^3(I)$, $\mathcal{L}\SS^3(-I)$ or $\mathcal{L}\SS^3(Q_0)$.
For any $Q \in \mathrm{SO}_4$, there is a natural inclusion $\tilde{\mathcal{F}}$ (to be described below) of each connected component into $\Omega(\SS^3 \times \SS^3)$.
Furthermore, the inclusions $ \mathcal{L}\SS^3(\pm \mathbf{i}, \mp \mathbf{i})  \subset \Omega(\SS^3 \times \SS^3)$ are homotopy equivalences (\cite{SS12}). 
We prove that the same does not hold for the spaces $\mathcal{L}\SS^3(-\1,\1)$ and $\mathcal{L}\SS^3(\1, -\1)_n$:

\begin{Main}\label{th6}
The inclusions 
\[ \mathcal{L}\SS^3(-\mathbf{1},\1) \subset \Omega(\SS^3 \times \SS^3), \quad \mathcal{L}\SS^3(\mathbf{1},-\1)_n \subset \Omega(\SS^3 \times \SS^3) \]
are not homotopy equivalences. Moreover

\begin{eqnarray*}
 \mathrm{dim} \;  \mathrm{H}^2 (\mathcal{L}\SS^3(-\1,\1), \R) \geq 3 \quad and \quad \mathrm{dim} \; \mathrm{H}^4(\mathcal{L}\SS^3(\1,-\1)_n, \R) \geq 4. 
 \end{eqnarray*} 
\end{Main}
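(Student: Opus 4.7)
The plan proceeds by producing, in each of the two target spaces, explicit cohomology classes in sufficient quantity. The numerical fact we exploit is that $\Omega(\SS^3 \times \SS^3) \simeq \Omega\SS^3 \times \Omega\SS^3$ has rational cohomology ring isomorphic to $\R[x,y]$ with $|x|=|y|=2$; consequently $\dim H^2(\Omega(\SS^3 \times \SS^3);\R) = 2$ and $\dim H^4(\Omega(\SS^3 \times \SS^3);\R) = 3$. Hence the claimed lower bounds $\dim H^2 \geq 3$ and $\dim H^4 \geq 4$ immediately force the Frenet inclusions to fail to be homotopy equivalences, and the first assertion of the theorem follows from the second.

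The candidate ``extra'' classes are produced by a \emph{loop-insertion} procedure. Fix a base curve $\gamma_0$ in each non-convex component and choose interior points $0 < t_1 < \cdots < t_k < 1$. Near each $t_i$, a tiny convex loop in $\SS^3$ can be grafted onto $\gamma_0$; after quotienting by the constraint that the loop match the local Frenet frame, the free parameter is a copy of $\SS^2$. This yields a map $(\SS^2)^k \to \mathcal{L}\SS^3(Q)$, and pulling back the top class gives a candidate cohomology class in degree $2k$. The desired bounds would then be realized by taking, in $\mathcal{L}\SS^3(-\1, \1)$, the two classes pulled back from $H^*(\Omega(\SS^3\times\SS^3))$ together with a single-insertion class; and, in $\mathcal{L}\SS^3(\1, -\1)_n$, the three pulled-back classes together with a double-insertion class.

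The mechanism that separates the extra classes from the pulled-back ones is that a small inserted loop is contractible in $\SS^3$, so the $(\SS^2)^k$-family is null-homotopic when composed with the inclusion into $\Omega(\SS^3 \times \SS^3)$. Consequently the insertion classes lie in the kernel of the inclusion-induced map on cohomology, and, provided they are non-zero upstairs, they automatically provide genuinely extra generators beyond the pullback image.

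Verifying that non-vanishing is the main obstacle. I would combine the stratification-by-itinerary viewpoint of \cite{SS12} with a local analysis at the insertion sites: each insertion enlarges a specific Bruhat-type cell, and one computes the associated cellular boundary to rule out unwanted cancellations. An alternative, possibly cleaner route is to construct an explicit dual cochain by a transverse slice in $\mathcal{L}\SS^3(Q)$ meeting the inserted $\SS^2$-family in a single point. The most delicate instance is the $H^4$ bound for $\mathcal{L}\SS^3(\1, -\1)_n$, where one must show that a double insertion produces a class genuinely independent of all cup products of single-insertion classes with each other and with the pulled-back generators; here a careful book-keeping of the relative ordering of the two insertion sites along the curve should be the decisive input.
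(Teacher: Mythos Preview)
Your overall logical skeleton is sound: the cohomological lower bounds do imply that the inclusions are not homotopy equivalences, and the strategy of exhibiting classes that vanish under the Frenet inclusion but survive in $\mathcal{L}\SS^3$ is exactly the right one. However, the proposal as written has a genuine gap, and your construction differs substantially from the paper's.

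The gap is the one you yourself flag: the non-vanishing of your inserted classes is essentially the entire content of the theorem, and you have only gestured at how to establish it. Your proposed ``transverse slice'' amounts to asking for a codimension-$(2k-2)$ submanifold of $\mathcal{L}\SS^3$ with trivial normal bundle, together with a computation that your $(\SS^2)^k$-family meets it with intersection number $\pm 1$; neither ingredient is supplied. There is also a problem earlier in the pipeline: the assertion that ``the free parameter is a copy of $\SS^2$'' when grafting a small convex loop in $\SS^3$ is not justified. The space of small locally convex loops matching a prescribed Frenet frame at both ends is (a translate of) $\mathcal{L}\SS^3(\1,\1)$, whose homotopy type is not that of $\SS^2$, so the very domain of your candidate cycles is unclear.

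The paper avoids any direct construction in $\SS^3$. The key device is the decomposition of Theorems~\ref{th0} and~\ref{th1}: a locally convex curve $\gamma$ in $\SS^3$ corresponds to a pair $(\gamma_l,\gamma_r)$ with $\gamma_l \in \mathcal{L}\SS^2(z_l)$, $\gamma_r \in \mathcal{G}\SS^2(z_r)$, subject to a speed/curvature coupling. One then \emph{imports} the known $\SS^2$ constructions from~\cite{Sal13}. The dual submanifold $\hat{\mathcal{M}}_k \subset \mathcal{L}\SS^3(z_l,z_r)$ is defined by the condition that $\gamma_l$ be multiconvex of multiplicity $k$; its codimension and trivial normal bundle are inherited directly from the $\SS^2$ case. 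The cycles $\hat{h}_{2k-2}:\SS^{2k-2}\to\mathcal{L}\SS^3$ are obtained from Saldanha's maps $h_{2k-2}:\SS^{2k-2}\to\mathcal{L}\SS^2$ by a ``relaxation--reflection'' construction that manufactures a compatible right companion $\gamma_r$ for each $\gamma_l$ in the family. The intersection number $\hat{m}_{2k-2}(\hat{h}_{2k-2})=\pm 1$ and the null-homotopy in $\mathcal{G}\SS^3$ then both reduce, by design, to the already-proved $\SS^2$ statements. Note in particular that the resulting cycles are spheres $\SS^{2k-2}$, not products $(\SS^2)^k$, and that the parity of $k$ governs which of the two target spaces receives the extra class; this is what produces the $4\mid(j+2)$ versus $4\mid j$ dichotomy in Theorem~\ref{th5}, from which Theorem~\ref{th6} is read off.
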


In particular, $\mathcal{L}\SS^3(-\1, \1)$ and $\mathcal{L}\SS^3(\1, -\1)_n$ are not homotopy equivalent to $\mathcal{L}\SS^3(\pm \mathbf{i},\mp \mathbf{i})$.
Recall that $\mathrm{H}^2 (\Omega(\SS^3 \times \SS^3), \R) = \R^2$ and $\mathrm{H}^4 (\Omega(\SS^3 \times \SS^3), \R) = \R^3$.
The methods in this papers do not immediately yield
upper estimates for these dimensions or
results for the other two spaces 
($\mathcal{L}\SS^3(\1,\1)$ and $\mathcal{L}\SS^3(-\1,-\1)$).

\bigskip

We now proceed to construct the inclusion $\tilde{\mathcal{F}}$. We do this in greater generality, for any dimension $n \geq 2$. 

A \emph{locally convex} curve on $\SS^n$ is a curve $\gamma$ of class $C^k$ ($k \geq n$) such that $ \mathrm{det}(\gamma(t),\gamma'(t),\gamma''(t), \cdots, \gamma^{(n)}(t)) > 0 $. Given a locally convex curve $\gamma:[0,1] \rightarrow \SS^n$, we associate a \emph{Frenet frame curve} $ \mathcal{F_{\gamma}}:[0,1] \rightarrow \mathrm{SO}_{n+1}$ by applying the Gram-Schmidt orthonormalization to the $(n+1)$-vectors $(\gamma(t),\gamma'(t),\dots,\gamma^{(n)}(t))$.

\begin{definition}\label{LSnQ}
For $Q \in \mathrm{SO}_{n+1}$, ${\mathcal{L}\SS^{n}}(Q)$ is the set of all locally convex curves $\gamma:[0,1] \rightarrow \SS^n$ such that $\mathcal{F}_\gamma(0)=I$ and $\mathcal{F}_\gamma(1)=Q$.
\end{definition}

For $n \geq 2$,
let $\Pi_{n+1} : \mathrm{Spin}_{n+1} \rightarrow \mathrm{SO}_{n+1}$ be the
universal double cover.
We denote by $\mathbf{1}$ the identity element in $\mathrm{Spin}_{n+1}$, and by $-\mathbf{1}$ the unique non-trivial element in $\mathrm{Spin}_{n+1}$ such that $\Pi_{n+1}(-\mathbf{1})=I$. The Frenet frame curve $\mathcal{F_{\gamma}}:[0,1] \rightarrow \mathrm{SO}_{n+1}$ can be uniquely lifted to a continuous curve $\tilde{\mathcal{F}_{\gamma}}:[0,1] \rightarrow \mathrm{Spin}_{n+1}$ such that $\mathcal{F_{\gamma}}=\Pi_{n+1} \circ \tilde{\mathcal{F}_{\gamma}}$ and $\tilde{\mathcal{F}_{\gamma}}(0)=\mathbf{1}$. 

\begin{definition}\label{LSnz}
For $z \in \mathrm{Spin}_{n+1}$, ${\mathcal{L}\SS^{n}}(z)$ is the subset of ${\mathcal{L}\SS^{n}}(\Pi_{n+1}(z))$ for which $\tilde{\mathcal{F}_{\gamma}}(1)=z$.
\end{definition}

It turns out that $\mathcal{L}\SS^n(z)$ is always non-empty. Clearly, ${\mathcal{L}\SS^{n}}(\Pi_{n+1}(z))$ is the disjoint union of ${\mathcal{L}\SS^{n}}(z)$ and ${\mathcal{L}\SS^{n}}(-z).$

Recall that $\mathrm{Spin}_{4}$ can be identified with $\SS^3 \times \SS^3$
(see Subsection~\ref{s21}).
In particular, given $z=(z_l,z_r) \in \SS^3 \times \SS^3$
(where $l$ and $r$ just stand for \textit{left} and \textit{right})
we will denote by $\mathcal{L}\SS^3(z_l,z_r)$ the space of locally convex curves in $\SS^3$ with the initial and final lifted Frenet frame respectively $(\mathbf{1},\mathbf{1})$ and $(z_l,z_r),$ i.e.,
\[ \mathcal{L}\SS^3(z_l,z_r)=\{\gamma: [0,1] \rightarrow \SS^3 \; | \, \tilde{\mathcal{F}}_\gamma(0)=(\mathbf{1},\mathbf{1}) \; \text{and} \; \tilde{\mathcal{F}}_\gamma(1)=(z_l,z_r) \}. \]  

Though the study of the spaces of locally convex curves may seem a rather specific topic, it has attracted the attention of many researchers both for its topological richness and for its connection with other areas (for example, symplectic geometry \cite{Arn95}, differential equations \cite{BST03}, control theory \cite{RS90} and engineering \cite{Dub57}).

The study of the topology of the spaces of locally convex curves on the $2$-sphere started with Little in $1970$. He proved that the space ${\mathcal{L}\SS^{2}}(I)$ has $3$ connected components (\cite{Lit70}), that we denote by ${\mathcal{L}\SS^{n}}(\mathbf{1}),  {\mathcal{L}\SS^{n}}(-\mathbf{1})_c$ and ${\mathcal{L}\SS^{n}}(-\mathbf{1})_n$. Here ${\mathcal{L}\SS^2}(-\mathbf{1})_c$ is the component of convex curves (\cite{Fen29}) and this component is contractible (\cite{Ani98}) while ${\mathcal{L}\SS^2}(-\mathbf{1})_n$ is the component associated to non-convex curves (see figure \ref{Little} below). 
\begin{figure}[H]
\centering
\includegraphics[scale=0.3]{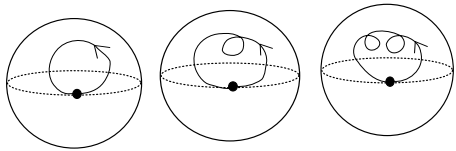}
\caption{Examples of curves in the components $\mathcal{L}\SS^2(-\mathbf{1})_c, \; \mathcal{L}\SS^2(\mathbf{1})$ and $\mathcal{L}\SS^2(\mathbf{-1})_n$, respectively.}
\label{Little}
\end{figure}

The topology of the spaces of locally convex curves on $\SS^n$ and their variations was also studied by others authors.
Among many, we mention the work of M. Z. Shapiro, B. Z. Shapiro and B. A. Khesin (\cite{SS91}, \cite{Sha93}, \cite{KS92} and \cite{KS99}) which in the 1990's determined the number of connected components of the space of locally convex curves on the $n$-sphere, in the Euclidean space, and in the Projective space.
The beautiful paper of V. I. Arnold~\cite{Arn95}
also considers related questions.
More recently, the study of Engel structures also used related methods
(\cite{CPPP15} and \cite{PP16}).
For a longer list of references, see \cite{Zul12}.

Even though the number of connected components of those spaces has been completely understood, little information on the cohomology or higher homotopy groups was available, even on the $2$-sphere. The topology of the spaces $\mathcal{L}\SS^2(\mathbf{1})$ and $\mathcal{L}\SS^2(\mathbf{-1})_n$ remained mysterious
until \cite{Sal09I}, \cite{Sal09II} and \cite{Sal13}: 

\begin{teo}[Saldanha, \cite{Sal13}]\label{thmsal}
We have the following homotopy equivalences
\[ \mathcal{L}\SS^2(\mathbf{1}) \approx (\Omega \SS^3) \vee \SS^2 \vee \SS^6 \vee \SS^{10} \vee \cdots, \quad \mathcal{L}\SS^2(-\mathbf{1})_n \approx (\Omega \SS^3) \vee \SS^4 \vee \SS^8 \vee \cdots.  \]
\end{teo}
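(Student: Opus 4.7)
The plan is to realize the inclusion $\mathcal{L}\SS^2(z) \hookrightarrow \Omega\SS^3$ (obtained from the lifted Frenet frame) as the inclusion of a wedge factor in an explicit decomposition. First I would enlarge $\mathcal{L}\SS^2(z)$ to the space $\mathcal{I}\SS^2(z)$ of $C^k$-immersions $[0,1]\to\SS^2$ with the same boundary frame data; a Smale--Hirsch style argument shows $\mathcal{I}\SS^2(z) \simeq \Omega_z \mathrm{Spin}_3 \simeq \Omega\SS^3$, so the composition $\mathcal{L}\SS^2(z) \hookrightarrow \mathcal{I}\SS^2(z) \simeq \Omega\SS^3$ recovers the Frenet inclusion. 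The question then becomes how much extra topology is introduced by imposing local convexity, i.e.\ by removing the ``forbidden'' set $\mathcal{I}\SS^2(z)\setminus\mathcal{L}\SS^2(z)$.

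Next I would stratify $\mathcal{I}\SS^2(z)$ by the combinatorial type of the flex locus $\{t : \det(\gamma(t),\gamma'(t),\gamma''(t)) = 0\}$ together with the orders of vanishing at its points. Strata of codimension $k$ correspond to prescribed vanishing patterns, and the associated graded pieces of the resulting filtration are Thom spaces of explicit normal bundles over configuration-like base spaces. A parity computation on the endpoint $z\in\mathrm{Spin}_3$ should isolate which strata survive in each target component: one expects $\SS^{4k+2}$ for $k\ge 0$ when $z=\mathbf{1}$, and $\SS^{4k}$ for $k\ge 1$ when $z=-\mathbf{1}$ (in the non-convex component).

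To upgrade this filtration to a genuine wedge decomposition I would construct explicit test maps $\phi_k : \SS^{d_k} \to \mathcal{L}\SS^2(z)$ realizing each summand. Exploiting the contractibility of the convex component $\mathcal{L}\SS^2(-\mathbf{1})_c$, the natural candidate is to begin with a fixed convex curve and insert $k$ pairs of small locally convex ``chops'' whose positions and orientations vary over a sphere, with the global $\SS^{d_k}$ structure arising from a rotational symmetry in $\mathrm{Spin}_3$. Together with the inclusion from $\Omega\SS^3$, these should assemble into a candidate equivalence $\Omega\SS^3 \vee \bigvee_k \SS^{d_k} \to \mathcal{L}\SS^2(z)$.

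The main obstacle, and the heart of the argument, is verifying that this combined map is a homotopy equivalence. The strategy is to compute $H^*(\mathcal{L}\SS^2(z);\R)$ via the spectral sequence of the flex-locus stratification and match it term-by-term with $H^*(\Omega\SS^3) \otimes H^*(\bigvee_k \SS^{d_k})$; Whitehead's theorem then applies, since $\mathcal{L}\SS^2(z)$ has the homotopy type of a CW complex by standard infinite-dimensional manifold arguments. The delicate point is showing that the spectral sequence collapses and that all attaching maps in the stratification are null-homotopic; the four-fold connectivity gap between consecutive wedge summands makes the necessary obstruction-theoretic arguments feasible but technically substantial, and this is where the bulk of the work in \cite{Sal13} should be concentrated.
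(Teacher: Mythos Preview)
This theorem is quoted from \cite{Sal13} and not proved in the present paper, but the paper describes enough of the ingredients (Subsections~\ref{s81} and~\ref{s83}) to allow a comparison. Your stratification-by-flex-locus and spectral-sequence approach is genuinely different from what \cite{Sal13} actually does. There, explicit maps $h_{2k-2}:\SS^{2k-2}\to\mathcal{L}\SS^2((-\mathbf{1})^k)$ are constructed directly and detected via intersection with the submanifolds $\mathcal{M}_k$ of \emph{multiconvex} curves (Definition~\ref{multiconvex}): these are closed, contractible, of codimension $2k-2$, with trivial normal bundle, so counting intersections defines cohomology classes $m_{2k-2}$ satisfying $m_{2k-2}(h_{2k-2})=\pm1$. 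The \emph{loose/tight} dichotomy (Definition~\ref{tight}), based on the operation of adding a pair of loops, organizes the rest: loose maps factor through $\Omega\SS^3$, while the tight maps $h_{2k-2}$ supply the extra sphere summands. No Vassiliev-type stratification or associated spectral sequence appears.

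Two concrete issues with your sketch. First, a slip: for a wedge one has $\tilde H^*(X\vee Y)\cong\tilde H^*(X)\oplus\tilde H^*(Y)$, not the tensor product you wrote (that would be a product). Second, and more seriously, the assertion that the stratification spectral sequence collapses and that all attaching maps are null-homotopic \emph{is} the content of the theorem, and your ``four-fold connectivity gap'' justification is not sufficient: the sphere summands must also split off from $\Omega\SS^3$, which has nontrivial homotopy in every even degree, so obstruction theory alone does not force null-homotopy. The multiconvex-submanifold route in \cite{Sal13} bypasses this difficulty entirely by furnishing explicit dual cohomology classes that pair nontrivially with the $h_{2k-2}$; this geometric detection mechanism is exactly the idea your outline is missing.
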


Now we will introduce a larger space of curves that will have an important role in this work. Let $\gamma$ be a curve in $\SS^n$ of class $C^k$ ($k \geq n$): $\gamma$ is called \emph{generic} if the vectors $\gamma(t), \gamma'(t),\gamma''(t),\dots,\gamma^{(n-1)}(t)$ are linearly independent for all $t \in [0,1]$. One can still define a Frenet frame for generic curves (which are not necessarily locally convex). Indeed, one can apply Gram-Schmidt to the linearly independent vectors $\gamma(t),\gamma'(t),\dots,\gamma^{(n-1)}(t)$ to obtain $n$ orthonormal vectors $u_0(t),u_1(t), \dots, u_{n-1}(t)$. Then, there is a unique vector $u_n(t)$ for which $u_0(t),u_1(t), \dots, u_{n-1}(t), u_n(t)$ is a positive orthonormal basis. So, the continuous curve $\mathcal{F_{\gamma}}:[0,1] \rightarrow \mathrm{SO}_{n+1}$ defined by $\mathcal{F_{\gamma}}(t)=(u_0(t),u_1(t), \dots, u_{n-1}(t), u_n(t))$ is called the \emph{Frenet frame curve} of the generic curve $\gamma:[0,1] \rightarrow \SS^n$. 

\begin{definition}\label{GSnQ}
For $Q \in \mathrm{SO}_{n+1}$, $\mathcal{G}\SS^n(Q)$ is the space of all generic curves $\gamma:[0,1] \rightarrow \SS^{n}$ such that $\mathcal{F_{\gamma}}(0)=I$ and $\mathcal{F_{\gamma}}(1)=Q$. For $z \in \mathrm{Spin}_{n+1}$, ${\mathcal{G}\SS^{n}}(z)$ is the subset of ${\mathcal{G}\SS^{n}}(\Pi_{n+1}(z))$ for which $\tilde{\mathcal{F}_{\gamma}}(1)=z$.
\end{definition}

 We thus have $\mathcal{L}\SS^n(Q) \subset \mathcal{G}\SS^n(Q)$ and $\mathcal{L}\SS^n(z) \subset \mathcal{G}\SS^n(z)$.

\medskip

The homotopy type of the spaces $\mathcal{G}\SS^{n}(z)$, $z \in \mathrm{Spin}_{n+1}$, is well understood. Indeed, let us define $\Omega \mathrm{Spin}_{n+1}(z)$ to be the space of all continuous curves $\alpha : [0,1] \rightarrow \mathrm{Spin}_{n+1}$ with $\alpha(0)=\mathbf{1}$ and $\alpha(1)=z$.  It is well-known that different values of $z \in \mathrm{Spin}_{n+1}$ give rise to homeomorphic spaces $\Omega \mathrm{Spin}_{n+1}(z)$, therefore we can drop $z$ from the notation and write $\Omega \mathrm{Spin}_{n+1}$. Using the Frenet frame, we define the following \emph{Frenet frame injection} $\tilde{\mathcal{F}}: \mathcal{G}\SS^{n}(z) \rightarrow \Omega \mathrm{Spin}_{n+1}$ defined by $(\tilde{\mathcal{F}}(\gamma))(t) = \tilde{\mathcal{F}}_\gamma(t)$. The inclusion $\tilde{\mathcal{F}}:\mathcal{G}\SS^n(z) \rightarrow \Omega \mathrm{Spin}_{n+1}$ is a homotopy equivalence: this follows from the results of Hirsch and Smale (\cite{Hir59} and \cite{Sma59b}) or from the h-principle (\cite{Gro86} and \cite{Eli02}); see Subsection~\ref{s83} for a self-contained explanation.

In \cite{SS12}, Saldanha and Shapiro gave an explicit finite list $z_0, \cdots, z_k$ of elements of $\mathrm{Spin}_{n+1}$ such that, for any $z \in \mathrm{Spin}_{n+1}$, there is $z_j$ in that list such that $\mathcal{L}\SS^{n}(z)$ is homeomorphic to $\mathcal{L}\SS^{n}(z_j)$. Moreover, $\mathcal{L}\SS^{n}(z_0)$ and $\mathcal{G}\SS^{n}(z_0)$ are homeomorphic. Also, the inclusions $\mathcal{L}\SS^n(z_j) \subset \mathcal{G}\SS^n(z_j)$ induce surjective maps between homotopy or homology groups.

For $n = 3$, the result in \cite{SS12} says that given $(z_l,z_r) \in \SS^{3} \times \SS^3$, the space $\mathcal{L}\SS^{3}(z_l,z_r)$ is homeomorphic to
at least one of the five spaces:
\begin{equation*}\label{5space}
\quad \mathcal{L}\SS^3(\mathbf{i},-\mathbf{i}), \quad 
\mathcal{L}\SS^3(\mathbf{1},-\mathbf{1}), \quad \mathcal{L}\SS^3(-\mathbf{1},\mathbf{1}), \quad  
\mathcal{L}\SS^3(\mathbf{1},\mathbf{1}), \quad \mathcal{L}\SS^3(-\mathbf{1},-\mathbf{1}).
\end{equation*} 
The following homeomorphism also holds:
\[ \mathcal{L}\SS^3(\mathbf{i},-\mathbf{i}) \simeq \Omega (\SS^3 \times \SS^3) = \Omega \SS^3 \times \Omega\SS^3.  \]
Recall that
\begin{equation*}\label{loopspace}
H^j(\Omega (\SS^3 \times \SS^3),\R)=
\begin{cases}
0, & j \; \mathrm{odd} \\
\R^{l+1}, & j=2l, \; l\in\N.
\end{cases}
\end{equation*}

We would like to determine which among these $5$ spaces are homeomorphic.
We do not know the complete answer yet but we present some results:

\begin{Main}\label{th5}
For any even integer $j \geq 1$, we have
\[ \mathrm{dim} \; H^j(\mathcal{L}\SS^3(-\mathbf{1},\1),\R) \geq 1 + \mathrm{dim} \; H^j(\mathcal{G}\SS^3 (-\mathbf{1},\1),\R), \quad 4|(j+2),  \]
\[ \mathrm{dim} \; H^j(\mathcal{L}\SS^3(\mathbf{1},-\1),\R) \geq 1 + \mathrm{dim} \; H^j(\mathcal{G}\SS^3 (\mathbf{1},-\1),\R), \quad 4|j .\]

\end{Main}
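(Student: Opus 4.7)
The aim is to exhibit, in each dimension $j$ satisfying the parity hypothesis, one additional cohomology class in $H^j(\mathcal{L}\SS^3(z),\R)$ beyond those inherited from $H^j(\mathcal{G}\SS^3(z),\R)$. The starting point is the fact, cited from \cite{SS12}, that the inclusion $\iota\colon\mathcal{L}\SS^3(z)\hookrightarrow\mathcal{G}\SS^3(z)$ induces a surjection on homology. Working over $\R$ and using that all spaces in sight have finite-dimensional cohomology in each degree, dualizing gives that
\[
\iota^{\ast}\colon H^j\bigl(\mathcal{G}\SS^3(z),\R\bigr)\hookrightarrow H^j\bigl(\mathcal{L}\SS^3(z),\R\bigr)
\]
is injective. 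Since $\mathcal{G}\SS^3(z)\simeq\Omega(\SS^3\times\SS^3)$ by the $h$-principle discussion in Subsection~\ref{s83}, its cohomology is known. Thus it suffices to produce one nontrivial class in $H^j(\mathcal{L}\SS^3(z),\R)$ that is linearly independent from $\mathrm{Im}(\iota^{\ast})$, for each $j$ with the stated parity.

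\textbf{Construction of the extra class.}
The natural tool is a \emph{loop-insertion} map modeled on Saldanha's constructions for $\SS^2$ in \cite{Sal09I,Sal09II,Sal13}, whose output on $\SS^2$ gives precisely the extra spheres $\SS^2\vee\SS^6\vee\SS^{10}\vee\cdots$ and $\SS^4\vee\SS^8\vee\cdots$ seen in Theorem~\ref{thmsal}. Concretely, fix a reference curve $\gamma_0\in\mathcal{L}\SS^3(z)$; at a chosen interior time one inserts a highly convex arc, parametrized by data (axis and number of turns) that naturally sweeps out a sphere $\SS^j$. This yields a continuous map $\phi_j\colon\SS^j\to\mathcal{L}\SS^3(z)$. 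The parity conditions $4\mid(j{+}2)$ (for $z=(-\mathbf{1},\mathbf{1})$) and $4\mid j$ (for $z=(\mathbf{1},-\mathbf{1})$) are forced by the fact that each inserted convex turn modifies the lifted Frenet frame $\tilde{\mathcal{F}}_{\gamma}(1)\in\mathrm{Spin}_4=\SS^3\times\SS^3$ by a definite element; only certain counts of turns preserve the prescribed endpoint $z$, and this matches the dimensions appearing in Theorem~\ref{thmsal} exactly. Dualizing $\phi_j$ then produces the desired extra cohomology class.

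\textbf{Main obstacle.}
The delicate step is to show that the cycle $\phi_j$ is nontrivial in $\mathcal{L}\SS^3(z)$ while its image $\iota\circ\phi_j$ is null-homotopic in $\mathcal{G}\SS^3(z)$. Nullhomotopy on the generic side should follow from the $h$-principle: once local convexity is relaxed, the inserted convex turns can be unwound through curves that briefly become non-locally-convex. Nontriviality on the locally convex side is harder and requires pairing $\phi_j$ against an explicit cocycle. I expect this to be done by constructing a geometric invariant (a convex-filtration or "Bruhat-stratum" counting function, in the spirit of the cellular decompositions Saldanha uses for $\mathcal{L}\SS^n$) that assigns a nonzero pairing to the loop-insertion sphere. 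Ensuring that the chosen insertion lands exactly in the parity class corresponding to the prescribed endpoint $(z_l,z_r)\in\SS^3\times\SS^3$, rather than in the other component of $\mathcal{L}\SS^3(\Pi_4(z))$, is the main combinatorial hurdle, and is what forces the precise dichotomy $4\mid j$ versus $4\mid(j+2)$ stated in the theorem.
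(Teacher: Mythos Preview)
Your high-level strategy is correct and matches the paper's: use injectivity of $\iota^\ast$ from the surjectivity result of \cite{SS12}, then produce a sphere $\phi_j$ that is null-homotopic in $\mathcal{G}\SS^3(z)$ and pair it against a geometric cocycle to show it is nontrivial in $\mathcal{L}\SS^3(z)$. But beyond this outline, the proposal does not contain a proof. The two essential ingredients---the actual construction of $\phi_j$ and the cocycle it pairs with---are left as ``I expect this to be done by\ldots''. In particular, your description of the sphere as being swept by ``axis and number of turns'' of an inserted convex arc is not a construction; and the parity analysis you sketch (tracking how an inserted turn moves $\tilde{\mathcal{F}}_\gamma(1)$ in $\mathrm{Spin}_4$) does not by itself explain why $4\mid(j+2)$ lands in $\mathcal{L}\SS^3(-\mathbf{1},\mathbf{1})$ rather than in $\mathcal{L}\SS^3(\mathbf{1},\mathbf{1})$ or $\mathcal{L}\SS^3(-\mathbf{1},-\mathbf{1})$.

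The key idea you are missing, and which the paper supplies, is that one should \emph{not} attempt to build $\phi_j$ and the detecting cocycle directly in $\SS^3$. Instead, the paper uses the decomposition Theorems~\ref{th0} and~\ref{th1}: every $\gamma\in\mathcal{L}\SS^3(z_l,z_r)$ corresponds to a pair $(\gamma_l,\gamma_r)$ with $\gamma_l\in\mathcal{L}\SS^2(z_l)$. This gives a ``left projection'' from $\mathcal{L}\SS^3(z_l,z_r)$ to $\mathcal{L}\SS^2(z_l)$, and conversely a ``relaxation--reflection'' construction (Subsection~\ref{s84}) builds from any $\beta\colon K\to\mathcal{L}\SS^2(\pm\mathbf{1})$ a map $\hat\beta\colon K\to\mathcal{L}\SS^3(\pm\mathbf{1},\mp\mathbf{k})$ with $\hat\beta_l=\beta$. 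The extra sphere is then $\hat h_{2k-2}$, the lift of Saldanha's map $h_{2k-2}\colon\SS^{2k-2}\to\mathcal{L}\SS^2((-\mathbf{1})^k)$ from \cite{Sal13}; the detecting cocycle $\hat m_{2k-2}$ is the intersection number with the submanifold $\hat{\mathcal{M}}_k=\{\gamma:\gamma_l\in\mathcal{M}_k\}$, pulled back from the multiconvex stratum in $\mathcal{L}\SS^2$. Both the pairing $\hat m_{2k-2}(\hat h_{2k-2})=m_{2k-2}(h_{2k-2})=\pm 1$ and the null-homotopy in $\mathcal{G}\SS^3$ then follow essentially for free from the established $\SS^2$ results, and the dichotomy $4\mid j$ versus $4\mid(j+2)$ comes from whether $k$ is odd or even in $h_{2k-2}\colon\SS^{2k-2}\to\mathcal{L}\SS^2((-\mathbf{1})^k)$. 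Without this reduction to $\SS^2$, you are effectively proposing to redo \cite{Sal13} one dimension up, which is substantially harder and not what the paper does.
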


Moreover, explicit generators will be constructed.
Notice that Theorem~\ref{th6} follows directly from Theorem~\ref{th5}. 

\medskip 

We will prove that any generic curve in $\mathbb{S}^3$
can be decomposed as a pair of related generic curves in $\mathbb{S}^2$
(a generic curve in $\mathbb{S}^2$ is just an immersion);
moreover, if the curve in $\mathbb{S}^3$ is locally convex, then one of the associated curves in $\mathbb{S}^2$ is also locally convex (see Theorems~\ref{th0} and~\ref{th1}). These results are very useful because they enable us to use what is known in the case $n=2$ for the case $n=3$.

Given $\gamma \in \mathcal{G}\SS^2(z)$, let us denote by $\mathbf{t}_\gamma(t)$ the unit tangent vector of $\gamma$ at the point $\gamma(t)$, that is $\mathbf{t}_\gamma(t):=\frac{\gamma'(t)}{||\gamma'(t)||} \in \SS^2.$ 
Let $\mathbf{n}_\gamma(t)$ be the unit normal vector of $\gamma$ at the point $\gamma(t)$, that is 
$ \mathbf{n}_\gamma(t):= \gamma(t) \times \mathbf{t}_\gamma(t) $ where $\times$ is the cross-product in $\R^3$. Recall that the geodesic curvature $\kappa_\gamma(t)$ is given by $ \kappa_\gamma(t):= \frac{\mathbf{t}_\gamma'(t)\cdot\mathbf{n}_\gamma(t)}{||\gamma'(t)||}$ where $\cdot$ is the Euclidean inner product.

\begin{Main}\label{th0}
There is a homeomorphism between the space $ \mathcal{G}\SS^3(z_l,z_r)$ and the space of pairs of curves  $(\gamma_l,\gamma_r) \in \mathcal{G}\SS^2(z_l)\times \mathcal{G}\SS^2(z_r)$
satisfying the condition
\begin{equation*}\label{cond00}\tag{G}
||\gamma_l'(t)||=||\gamma_r'(t)||, \quad \kappa_{\gamma_l}(t)>\kappa_{\gamma_r}(t), \quad t \in [0,1].
\end{equation*}
\end{Main}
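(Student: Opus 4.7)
The plan is to exploit the classical identification $\mathrm{Spin}_4 \cong \SS^3 \times \SS^3$ arising from unit quaternions, under which the double cover $\Pi_4(q_l,q_r)$ acts on $\R^4 \cong \H$ by $v \mapsto q_l v \bar q_r$. Composing each of the two projections $\mathrm{Spin}_4 \to \SS^3 = \mathrm{Spin}_3$ with $\Pi_3$ and the resulting conjugation action of $\SS^3$ on $\SS^2 \subset \mathrm{Im}\,\H$, evaluated at the basepoint $\i$, produces two natural maps $\mathrm{Spin}_4 \to \SS^2$. Given $\gamma \in \mathcal{G}\SS^3(z_l,z_r)$ with lifted Frenet frame $\tilde{\mathcal{F}}_\gamma(t) = (\Gamma_l(t), \Gamma_r(t)) \in \SS^3 \times \SS^3$, the candidate map $\Phi(\gamma) := (\gamma_l, \gamma_r)$ is defined by $\gamma_l(t) = \Gamma_l(t)\, \i\, \overline{\Gamma_l(t)}$ and $\gamma_r(t) = \Gamma_r(t)\, \i\, \overline{\Gamma_r(t)}$. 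The boundary conditions $\tilde{\mathcal{F}}_\gamma(0) = (\1,\1)$, $\tilde{\mathcal{F}}_\gamma(1) = (z_l,z_r)$ immediately translate to the correct boundary Frenet frames for $\gamma_l$ and $\gamma_r$.

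The heart of the argument will be to read off the speeds and geodesic curvatures of $\gamma_l,\gamma_r$ directly from the Frenet data of $\gamma$. Writing $\mathcal{F}_\gamma' = \mathcal{F}_\gamma \Lambda$ with tridiagonal antisymmetric $\Lambda \in \mathfrak{so}_4$ having nonzero off-diagonal entries $v_1 = ||\gamma'||$, $v_2$ and $v_3$ (where $v_1,v_2>0$ exactly characterizes genericity of $\gamma$), the Lie algebra decomposition $\mathfrak{so}_4 \cong \mathfrak{so}_3^+ \oplus \mathfrak{so}_3^- \cong \mathrm{Im}\,\H \oplus \mathrm{Im}\,\H$ into self- and anti-self-dual parts yields, after fixing the quaternion conventions,
\[
\Gamma_l^{-1} \Gamma_l' = \tfrac{1}{2}\bigl((v_1+v_3)\i + v_2 \k\bigr), \qquad \Gamma_r^{-1} \Gamma_r' = \tfrac{1}{2}\bigl((v_3-v_1)\i + v_2 \k\bigr).
\]
A short quaternionic bracket calculation then gives $\gamma_l'(t) = v_2(t)\, \Gamma_l \j \overline{\Gamma_l}$ and $\gamma_r'(t) = v_2(t)\, \Gamma_r \j \overline{\Gamma_r}$, so $||\gamma_l'|| = ||\gamma_r'|| = v_2$. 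Computing $\mathbf{n}_{\gamma_\bullet} = \gamma_\bullet \times \mathbf{t}_{\gamma_\bullet}$ and $\mathbf{t}_{\gamma_\bullet}'$ by the same method yields $\kappa_{\gamma_l} = (v_1+v_3)/v_2$ and $\kappa_{\gamma_r} = (v_3-v_1)/v_2$, hence $\kappa_{\gamma_l} - \kappa_{\gamma_r} = 2v_1/v_2 > 0$. Thus condition (G) holds, and uniqueness of lifts of Frenet frames identifies $\Gamma_l, \Gamma_r$ with $\tilde{\mathcal{F}}_{\gamma_l}, \tilde{\mathcal{F}}_{\gamma_r}$; in particular $\gamma_l \in \mathcal{G}\SS^2(z_l)$ and $\gamma_r \in \mathcal{G}\SS^2(z_r)$.

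For the inverse, given $(\gamma_l,\gamma_r)$ satisfying (G), take their lifted Frenet frames $\Gamma_l, \Gamma_r : [0,1] \to \SS^3$ and set $\gamma(t) := \Gamma_l(t)\, \overline{\Gamma_r(t)} \in \SS^3$, which is the image of $1 \in \H$ under $\Pi_4(\Gamma_l,\Gamma_r)$. The same formulas now run in reverse: $v_2 = ||\gamma_l'|| = ||\gamma_r'||$ and $v_1 = \tfrac{v_2}{2}(\kappa_{\gamma_l} - \kappa_{\gamma_r})$ are both positive by (G), forcing $\gamma$ to be generic with $(\Gamma_l,\Gamma_r)$ as its lifted Frenet frame. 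Both constructions are continuous in the natural $C^k$ topology on curve spaces and are mutually inverse, yielding the desired homeomorphism. The main obstacle throughout will be careful sign bookkeeping: choosing compatible conventions for $\Pi_4$, for the self-dual/anti-self-dual identification $\mathfrak{so}_3^\pm \cong \mathrm{Im}\,\H$, and for the orientation of $\mathbf{n}_\gamma$, so that the inequality in (G) genuinely comes out as $\kappa_l > \kappa_r$ (rather than $\kappa_l < \kappa_r$) precisely when $\gamma$ is generic.
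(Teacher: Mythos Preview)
Your proposal is correct and follows essentially the same route as the paper: both arguments split the lifted Frenet frame $\tilde{\mathcal F}_\gamma=(\Gamma_l,\Gamma_r)\in\SS^3\times\SS^3$, compute the two components of the logarithmic derivative under the isomorphism $\mathfrak{so}_4\cong\mathrm{Im}\,\H\oplus\mathrm{Im}\,\H$, project via $\Pi_3$ to obtain $\gamma_l,\gamma_r\in\SS^2$, and read off $\|\gamma_l'\|=\|\gamma_r'\|$ and $\kappa_{\gamma_l}-\kappa_{\gamma_r}=2v_1/v_2>0$ from the resulting $\mathfrak{so}_3$ data. The only cosmetic difference is that the paper carries out the passage from $\Lambda_\gamma$ to $(\Gamma_l^{-1}\Gamma_l',\Gamma_r^{-1}\Gamma_r')$ by writing down $D_{(\1,\1)}\Pi_4$ and $D_{\1}\Pi_3$ as explicit matrices, whereas you invoke the self-dual/anti-self-dual decomposition directly; the formulas you obtain (with $v_2=\|\gamma'\|\kappa_\gamma$ playing the role of the paper's $2d$, and $v_1\pm v_3$ the role of $2b_l,2b_r$) match the paper's line for line.
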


\begin{Main}\label{th1}
There is a homeomorphism between the space $ \mathcal{L}\SS^3(z_l,z_r)$ and the space of pairs of curves  $(\gamma_l,\gamma_r) \in \mathcal{L}\SS^2(z_l)\times \mathcal{G}\SS^2(z_r)$
satisfying the condition
\begin{equation*}\label{cond}\tag{L}
||\gamma_l'(t)||=||\gamma_r'(t)||, \quad \kappa_{\gamma_l}(t)>|\kappa_{\gamma_r}(t)|, \quad t \in [0,1].
\end{equation*}
\end{Main}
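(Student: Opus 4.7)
The plan is to derive Theorem~\ref{th1} from Theorem~\ref{th0}. Every locally convex curve in $\SS^3$ is generic and uses the same Frenet frame construction, so the inclusion $\mathcal{L}\SS^3(z_l,z_r) \subset \mathcal{G}\SS^3(z_l,z_r)$ is respected by the homeomorphism $\Phi$ of Theorem~\ref{th0} between $\mathcal{G}\SS^3(z_l,z_r)$ and the space of pairs $(\gamma_l,\gamma_r)$ satisfying (G). The whole of Theorem~\ref{th1} therefore reduces to a pointwise characterization: for $\gamma$ with image $(\gamma_l,\gamma_r) = \Phi(\gamma)$, show that $\gamma$ is locally convex if and only if $\gamma_l \in \mathcal{L}\SS^2(z_l)$ and $\kappa_{\gamma_l}(t) > |\kappa_{\gamma_r}(t)|$ for all $t$. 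Since (G) already gives the speed equality and $\kappa_{\gamma_l} > \kappa_{\gamma_r}$, the only new content is the equivalence between local convexity in $\SS^3$ and the inequality $\kappa_{\gamma_l} + \kappa_{\gamma_r} > 0$, which combined with $\kappa_{\gamma_l} > \kappa_{\gamma_r}$ yields $\kappa_{\gamma_l} > |\kappa_{\gamma_r}|$ and in particular forces $\gamma_l$ to be locally convex.

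The core computation is to translate $\det(\gamma,\gamma',\gamma'',\gamma''')>0$ into the language of $(\gamma_l,\gamma_r)$. Writing $\mathcal{F}_\gamma^{-1}\mathcal{F}_\gamma' = \Lambda$, the antisymmetric matrix $\Lambda$ is tridiagonal with super-diagonal entries $v = \|\gamma'\|$, $k_1$ (the first curvature, automatically positive by Gram--Schmidt) and $k_2$ (the second curvature); here $\gamma$ is locally convex if and only if $k_2(t)>0$ for all $t$. Using the identification $\mathrm{Spin}_4 = \SS^3\times\SS^3$ of Subsection~\ref{s21}, the self-dual/anti-self-dual decomposition $\mathfrak{so}_4 = \mathfrak{su}_2\oplus\mathfrak{su}_2$ sends $\Lambda$ to a pair $(\Lambda_l,\Lambda_r)$ which controls the Frenet evolution of $\gamma_l$ and $\gamma_r$. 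A direct computation yields $|\Lambda_l|=|\Lambda_r|$ (reproving the equal-speed statement) together with schematic relations of the form
\[
\kappa_{\gamma_l} - \kappa_{\gamma_r} = \frac{2k_2}{v}, \qquad \kappa_{\gamma_l} + \kappa_{\gamma_r} = \frac{2k_1}{v},
\]
so that $k_2>0$ if and only if $\kappa_{\gamma_l}>\kappa_{\gamma_r}$, in agreement with Theorem~\ref{th0}, while the automatic positivity $k_1>0$ is equivalent to $\kappa_{\gamma_l}+\kappa_{\gamma_r}>0$.

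Both directions of the theorem then follow from this algebraic dictionary: if $\gamma\in\mathcal{L}\SS^3(z_l,z_r)$ then $k_1,k_2>0$ gives $\kappa_{\gamma_l}>|\kappa_{\gamma_r}|$ and $\gamma_l\in\mathcal{L}\SS^2(z_l)$; conversely, given $(\gamma_l,\gamma_r)$ satisfying (L), one solves for strictly positive $v,k_1,k_2$, so that $\Phi^{-1}(\gamma_l,\gamma_r)$ has positive Frenet coefficients and is locally convex. Continuity in both directions is inherited from Theorem~\ref{th0}. The main obstacle I expect is the explicit algebraic bookkeeping: one must fix coordinates on $\mathrm{Spin}_4 = \SS^3\times\SS^3$ and track how the tridiagonal entries of $\Lambda$ redistribute across the two copies of $\mathfrak{su}_2$, so that the torsion $k_2$ measures precisely $\kappa_{\gamma_l}-\kappa_{\gamma_r}$ and the curvature entry $k_1$ measures precisely $\kappa_{\gamma_l}+\kappa_{\gamma_r}$. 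This essentially repeats, with greater care about signs, the bookkeeping behind Theorem~\ref{th0}.
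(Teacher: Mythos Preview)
Your overall strategy is exactly the paper's: Theorem~\ref{th1} is obtained from Theorem~\ref{th0} by restricting the homeomorphism $\Phi$ to the locally convex locus and checking that local convexity (positivity of the torsion entry) corresponds pointwise to $\kappa_{\gamma_l}+\kappa_{\gamma_r}>0$. The paper states this in one line and, as an alternative, suggests redoing the computation with $\mathfrak{J}$ in place of $\mathfrak{Q}$ and $\tilde{\mathfrak{J}}=\{(b_l\i+d\k,b_r\i+d\k):\,b_l>|b_r|,\,d>0\}$ in place of $\tilde{\mathfrak{Q}}$.

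There is, however, a genuine bookkeeping error in your ``schematic relations'': the roles of $k_1$ and $k_2$ are swapped. From the explicit dictionary in the proof of Theorem~\ref{th0} one has
\[
\kappa_{\gamma_l}=\frac{\tau_\gamma+1}{\kappa_\gamma},\qquad
\kappa_{\gamma_r}=\frac{\tau_\gamma-1}{\kappa_\gamma},
\]
hence
\[
\kappa_{\gamma_l}-\kappa_{\gamma_r}=\frac{2}{\kappa_\gamma}=\frac{2v}{k_1},\qquad
\kappa_{\gamma_l}+\kappa_{\gamma_r}=\frac{2\tau_\gamma}{\kappa_\gamma}=\frac{2k_2}{k_1}.
\]
So it is the \emph{torsion} $k_2$ that governs the sum $\kappa_{\gamma_l}+\kappa_{\gamma_r}$, and the difference $\kappa_{\gamma_l}-\kappa_{\gamma_r}$ is automatically positive (this is condition~(G)), not equivalent to $k_2>0$ as you wrote. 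Your displayed formulas, taken at face value, make local convexity ($k_2>0$) equivalent to the generic condition~(G), and make $\kappa_{\gamma_l}+\kappa_{\gamma_r}>0$ automatic, which is internally inconsistent with your own first paragraph. Once you swap the two formulas (and replace $v$ by $k_1$ in the denominators), your argument goes through and matches the paper.
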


We now proceed to give a brief overview of the paper.

In Section~\ref{chapter2} we start with some algebraic preliminaries. There we recall some basic notions on the spin group and on signed permutation matrices which will be necessary to explain the Bruhat decomposition of the special orthogonal group and the lifted decomposition to the spin group. This decomposition was already an important tool in~\cite{SS12}, and it will also be very important for us.

In Section~\ref{chapter3} we present some basic notions on locally convex curves and generic curves. We also define globally convex curves, which are of fundamental importance in the study of locally convex curves. In Subsection~\ref{s35} we introduce another class of curves, the Jacobian (or holonomic) and quasi-Jacobian curves. These are nothing but a different point of view on Frenet frame curves associated to locally convex curves and generic curves. 

In Section~\ref{chapter6} we prove Theorem~\ref{th0} and Theorem~\ref{th1}, which will be crucial in the sequel. Also in this section we give some examples of these results.

Finally, Section~\ref{chapter8} is devoted to the proof of Theorem~\ref{th5}. To do this, in Subsection~\ref{s81} we will introduce a notion of ``adding a pair of spirals'' to a given curve. This notion is a slight modification of the notion of ``adding a pair of loops'' to a given curve in $\SS^2$, introduced in~\cite{Sal13}.
We will do this in order to adapt more easily
the results from~\cite{Sal13} to our case; this is possible thanks to Theorems~\ref{th0} and~\ref{th1}.
This adaptation will be done in the Subsections~\ref{s83} and \ref{s84}, while our main result, Theorem~\ref{th5}, will be proved in Subsection~\ref{s85}.  

Notice that Theorems~\ref{th0} and \ref{th1} still work in the remaining spaces
$\mathcal{L}\SS^3(\1,\1)$ and $\mathcal{L}\SS^3(-\1,-\1)$.
It is the adaptation process of results from $\SS^2$ to $\SS^3$
(explained in Section~\ref{chapter8}) that has limitations
and appears to produce only some examples of tight maps
(see Section~\ref{chapter8}).

This paper is based on the Ph.D. thesis~\cite{Alv16} of the first author,
who was advised by the second.
The authors gratefully acknowledge the financial support
of CAPES, CNPq, FAPERJ and PUC-Rio,
particularly during the first author's graduate studies.
We thank Carlos Tomei, Leonardo Navarro de Carvalho, Paul Schweitzer, Pedro Zühlke and Umberto Hryniewicz, members of the Ph.D. committee, for several valuable suggestions. 
We also thank Boris Shapiro and Victor Goulart for remarks and conversations,
and the referee for a careful and helpful report.

%This paper is based on the Ph.D. thesis~\cite{Alv16} of the first author,
%who was advised by the second.
%The authors gratefully acknowledge the financial support
%of CAPES, CNPq, FAPERJ and PUC-Rio,
%particularly during the first author's graduate studies.
%We thank the members of the Ph.D. committee for several valuable suggestions. 
%We also thank Boris Shapiro and Victor Goulart for remarks and conversations,
%and the referee for a careful and helpful report.

 \section{Basic definitions and properties}

\label{chapter2}

In this section we start with some algebraic preliminaries: first we recall some definitions and basic properties of the special orthogonal groups and the spin groups, and then we explain a decomposition of these groups (the Bruhat decomposition) into finitely many subsets which will play an important role in this work.   This is closely related to but not identical to the classical Bruhat decomposition. 

\subsection{Spin groups}\label{s21}

By definition, $n \geq 2$, the spin group $\mathrm{Spin}_{n+1}$ is the universal cover of $\mathrm{SO}_{n+1}$, and it comes with a natural projection $\Pi_{n+1} : \mathrm{Spin}_{n+1} \rightarrow \mathrm{SO}_{n+1}$ which is a double covering map. Throughout this work, the unit element in the group $\mathrm{Spin}_{n+1}$ will be denoted by $\1 \in \mathrm{Spin}_{n+1}$.  

For our purposes it will be sufficient to recall a description of $\mathrm{Spin}_{n+1}$ in the cases $n=2$ and $n=3$ and it is well known that $\mathrm{Spin}_3 \simeq \SS^3$ and $\mathrm{Spin}_4 \simeq \SS^3 \times \SS^3$.  

Let us start by identifying $\R^4$ with the algebra of quaternions $\mathbb{H}$, the set of quaternions with unit norm  can be naturally identified with $\SS^3$ and the space of imaginary quaternions (i.e., of real part $0$) is naturally identified to $\R^3$.

The canonical projection $\Pi_3: \mathrm{Spin}_3 \rightarrow \mathrm{SO}_3$ is given by $ \Pi_3(z)(h) = zh\bar{z} $ for any $h \in \R^3$. In matrix notations, this map can be defined by

\begin{equation*}
\Pi_3(a+b\i+c\j+d\k)=\begin{pmatrix} 
  a^2+b^2-c^2-d^2    & -2ad+2bc & 2ac+2bd \\ 
   2ad+2bc  &  a^2-b^2+c^2-d^2 & -2ab+2cd \\
   -2ac+2bd & 2ab+2cd & a^2-b^2-c^2+d^2 
\end{pmatrix}.
\end{equation*}

The canonical projection $\Pi_4: \mathrm{Spin}_4 \rightarrow \mathrm{SO}_4$ is given by $\Pi_4 (z_l,z_r)(q) = z_lq\bar{z_r}$ for any $q \in \R^4$. The following rather cumbersome description of $\Pi_4 : \mathrm{Spin}_4 \rightarrow \mathrm{SO}_4$ in matrix notation will be used in Lemma~\ref{Lemma2}.

\begin{equation*}
\Pi_4(a_l+b_l\i+c_l\j+d_l\k,a_r+b_r\i+c_r\j+d_r\k)=
\begin{pmatrix}
C_1 & C_2 & C_3 & C_4
\end{pmatrix}
\end{equation*}
where the columns $C_i$, for $1 \leq i \leq 4$, are given by
\begin{equation*}
C_1=
\begin{pmatrix} 
a_la_r+b_lb_r+c_lc_r+d_ld_r  \\ 
-a_lb_r+b_la_r-c_ld_r+d_lc_r  \\
-a_lc_r+b_ld_r+c_la_r-d_lb_r \\
-a_ld_r-b_lc_r+c_lb_r+d_la_r 
\end{pmatrix}
\quad C_2=
\begin{pmatrix} 
a_lb_r-b_la_r-c_ld_r+d_lc_r  \\ 
a_la_r+b_lb_r-c_lc_r-d_ld_r  \\
a_ld_r+b_lc_r+c_lb_r+d_la_r  \\
-a_lc_r+b_ld_r-c_la_r+d_lb_r  
\end{pmatrix}
\end{equation*}
\begin{equation*}
C_3=
\begin{pmatrix} 
a_lc_r+b_ld_r-c_la_r-d_lb_r  \\ 
-a_ld_r+b_lc_r+c_lb_r-d_la_r  \\
a_la_r-b_lb_r+c_lc_r-d_ld_r   \\
a_lb_r+b_la_r+c_ld_r+d_lc_r  
\end{pmatrix}
\quad C_4=
\begin{pmatrix} 
a_ld_r-b_lc_r+c_lb_r-d_la_r \\ 
a_lc_r+b_ld_r+c_la_r+d_lb_r \\
-a_lb_r-b_la_r+c_ld_r+d_lc_r \\
a_la_r-b_lb_r-c_lc_r+d_ld_r
\end{pmatrix}.
\end{equation*}

\subsection{Signed permutation matrices}\label{s22}

Let $\mathrm{S}_{n+1}$ be the group of permutations on the set of $n+1$ elements $\{1, \dots, n+1\}$. An inversion of a permutation $\pi \in \mathrm{S}_{n+1}$ is a pair $(i,j) \in \{1, \dots, n+1\}^2$ such that $i<j$ and $\pi(i)>\pi(j)$.
The number of inversions of a permutation $\pi \in \mathrm{S}_{n+1}$ is denoted by $\mathrm{inv}(\pi)$. The number of inversions is at most $n(n+1)/2$, and this number is only reached by the permutation $\rho \in \mathrm{S}_{n+1}$ defined by $\rho(i)=n+2-i$ for all $i \in \{1, \dots, n+1\}$. In other words, $\rho$ is the product of transpositions $\rho=(1\;n+1)(2\; n) ... \in \mathrm{S}_{n+1}.$

A matrix $P$ is a permutation matrix if each column and each row of $P$ contains exactly one entry equal to $1$, and the others entries are zero. Permutation matrices form a finite sub-group of $\mathrm{O}_{n+1}$. There is an obvious isomorphism between the group of permutation matrices and $\mathrm{S}_{n+1}$: to a permutation $\pi \in \mathrm{S}_{n+1}$ we can associate a permutation matrix $P_\pi=(p_{i,j})$ where $ P_\pi(e_i)=e_{\pi(i)}, $ where $e_i$ denotes the $i$-th vector of the canonical basis of $\R^{n+1}$. We also write $\mathrm{inv}(P_\pi) = \mathrm{inv}(\pi).$

More generally, a signed permutation matrix is a matrix for which each column and each row contains exactly one entry equal to $1$ or $-1$, and the others entries are zero. In the notation of Coxeter groups, the set of signed permutation matrices is $\mathrm{B}_{n+1} \subset \mathrm{O}_{n+1}$, $|\mathrm{B}_{n+1}| = 2^{n+1}(n+1)!$. Given a signed permutation matrix $P$, let $\mathrm{abs}(P)$ be the associated permutation matrix obtained by dropping the signs (put differently, the entries of $\mathrm{abs}(P)$ are the absolute values of the entries of $P$). This defines a homomorphism from $\mathrm{B}_{n+1}$ to $\mathrm{S}_{n+1}$, and we set $\mathrm{inv}(P) = \mathrm{inv}(\mathrm{abs}(P))$. 

The group of signed permutation matrices of determinant one is $\mathrm{B}_{n+1}^+=\mathrm{B}_{n+1} \cap \mathrm{SO}_{n+1}$, and it has a cardinal equal to $2^{n}(n+1)!$. 

\subsection{Bruhat decomposition}\label{s23}

Let us denote by $\mathrm{Up}^+_{n+1}$ the group of upper triangular matrices with positive diagonal entries.

\begin{definition}
Given $Q \in \mathrm{SO}_{n+1}$, we define the \emph{Bruhat cell} $\mathrm{Bru}_Q$ as the set of matrices $UQU' \in \mathrm{SO}_{n+1}$, where $U$ and $U'$ belong to $\mathrm{Up}^+_{n+1}$. 
\end{definition}

Each Bruhat cell contains a unique signed permutation matrix $P \in \mathrm{B}_{n+1}^+$, hence two Bruhat cells associated to two different signed permutation matrices are disjoint. We summarize this in the following result.

\begin{proposition}[Bruhat decomposition for $\mathrm{SO}_{n+1}$]\label{Bruhat1}
We have the decomposition
\[ \mathrm{SO}_{n+1}=\bigsqcup_{P \in \mathrm{B}_{n+1}^+}\mathrm{Bru}_P. \]
\end{proposition}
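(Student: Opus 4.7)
The plan is to verify two assertions: every $Q \in \mathrm{SO}_{n+1}$ lies in some $\mathrm{Bru}_P$ (existence), and the cells are pairwise disjoint for distinct $P \in \mathrm{B}_{n+1}^+$ (uniqueness).

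For existence, I would run a column-then-row reduction on $Q$, exploiting the fact that left multiplication by a matrix in $\mathrm{Up}^+_{n+1}$ realizes the row operations ``add a multiple of a lower row to a higher one'' together with positive row rescaling, while right multiplication by $\mathrm{Up}^+_{n+1}$ realizes the analogous column operations (adding earlier columns to later ones, with positive column rescaling). Given $Q$, I locate the rightmost nonzero entry in row $n+1$, say at column $j_1$, use right multiplications to zero out the entries to its right in row $n+1$ and positive rescaling of column $j_1$ to normalize the $(n+1, j_1)$-entry to $\epsilon_1 \in \{\pm 1\}$, and then use left multiplications with the new bottom row (which has a single nonzero entry) to zero out the rest of column $j_1$. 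The complementary $n \times n$ submatrix is invertible because determinants are preserved up to positive factors, so I induct to obtain $Q = UPU'$ with $P$ a signed permutation; the same determinant computation forces $\det P = +1$, so $P \in \mathrm{B}^+_{n+1}$.

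For uniqueness, I would use the ranks $r_{i,j}(Q)$ of the submatrices of $Q$ formed by rows $i, \ldots, n+1$ and columns $1, \ldots, j$. These are invariants of the action $Q \mapsto UQU'$, because right multiplication by $U' \in \mathrm{Up}^+_{n+1}$ only mixes columns $1, \ldots, j$ via the invertible block $U'[1..j;\, 1..j]$, and left multiplication by $U \in \mathrm{Up}^+_{n+1}$ only mixes rows $i, \ldots, n+1$ via the invertible block $U[i..n+1;\, i..n+1]$. For a signed permutation $P$ with underlying permutation $\sigma \in \mathrm{S}_{n+1}$, one computes $r_{i,j}(P) = \#\{b : 1 \leq b \leq j,\ \sigma(b) \geq i\}$, and this bivariate array determines $\sigma$ uniquely, so whenever $Q$ lies in two cells the underlying permutations coincide. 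To match the signs, I would write $UPU' = U_1 P_1 U_1'$, set $V := U_1^{-1}U$ and $V' := U_1' U'^{-1}$ in $\mathrm{Up}^+_{n+1}$, so that $P_1^{-1} V P = V'$; a direct computation gives $(P_1^{-1}VP)_{k,k} = \epsilon_k \epsilon'_k V_{\sigma(k), \sigma(k)}$, where $\epsilon_k, \epsilon'_k$ are the signs of $P, P_1$ in column $k$, and positivity of the diagonal of $V'$ combined with $V_{\sigma(k), \sigma(k)} > 0$ forces $\epsilon_k = \epsilon'_k$, hence $P = P_1$.

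The main obstacle is the bookkeeping in the existence step: at each inductive stage one must check that the surviving submatrix is invertible (so the next pivot exists) and that the elementary operations performed on it lift to admissible $\mathrm{Up}^+_{n+1}$ operations on the ambient $(n+1)\times(n+1)$ matrix without disturbing the pivots already placed. The uniqueness portion is, by contrast, essentially a formal identification of invariants once the rank functions and the sign computation are in hand.
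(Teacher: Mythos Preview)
The paper does not actually prove this proposition; it records it as a standard fact, remarking only that each Bruhat cell contains a unique signed permutation matrix. Your proposal therefore supplies a genuine proof where the paper has none, and your overall strategy---Gaussian-type reduction for existence, rank invariants plus a diagonal sign computation for uniqueness---is the standard and correct one.

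There is, however, a directional slip in your existence step. You say you take the \emph{rightmost} nonzero entry of row $n+1$ and then ``zero out the entries to its right''; but if $j_1$ is already the rightmost nonzero entry, there is nothing to its right to clear, and the bottom row will in general still have nonzero entries to the \emph{left} of $j_1$. Your next sentence then asserts that ``the new bottom row has a single nonzero entry'', which is false in this situation. Right multiplication by $\mathrm{Up}^+_{n+1}$ lets you add earlier columns into later ones, so the pivot you can use to clear the rest of row $n+1$ must be the \emph{leftmost} nonzero entry: take $j_1$ to be the leftmost nonzero position in row $n+1$, clear all entries $(n+1,j)$ with $j>j_1$ using column $j_1$, and only then does the bottom row reduce to a single nonzero. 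With this one-word correction the induction goes through exactly as you describe, including the invertibility of the complementary block and the determinant check forcing $\det P = 1$.

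Your uniqueness argument is correct as written: the rank functions $r_{i,j}$ are indeed invariant under the two-sided $\mathrm{Up}^+_{n+1}$ action and pin down the underlying permutation, and the diagonal computation $(P_1^{-1}VP)_{k,k}=\epsilon_k\epsilon'_k V_{\sigma(k),\sigma(k)}$ together with positivity of the diagonals of $V$ and $V'$ forces the signs to match.
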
   

Therefore there are $2^n(n+1)!$ different Bruhat cells. Each Bruhat cell $\mathrm{Bru}_P$ is diffeomorphic to $\R^{\mathrm{inv}(P)}$, hence they are open if and only if they have maximal dimension, that is, if they correspond to the permutation $\rho$ we previously defined by $\rho=(1\;n+1)(2\; n) ... $. 

\medskip

The Bruhat decomposition of $\mathrm{SO}_{n+1}$ can be lifted to the universal double cover $\Pi_{n+1} : \mathrm{Spin}_{n+1} \rightarrow \mathrm{SO}_{n+1}$. Let us define the following sub-group of $\mathrm{Spin}_{n+1}$: 
\[ \mathrm{\tilde{B}}_{n+1}^+:=\Pi_{n+1}^{-1}(\mathrm{B}_{n+1}^+). \]
The cardinal of $\mathrm{\tilde{B}}_{n+1}^+$ is twice the cardinal of $\mathrm{B}_{n+1}^+$, that is $2^{n+1}(n+1)!$. 

\begin{definition}
Given $z \in \mathrm{Spin_{n+1}}$ we define the \emph{Bruhat cell} $\mathrm{Bru}_z$ as the connected component of $\Pi_{n+1}^{-1}(\mathrm{Bru}_{\Pi_{n+1}(z)})$ which contains $z$. 
\end{definition}

It is clear,  from the definition of $\Pi_{n+1}$, that $\Pi_{n+1}^{-1}(\mathrm{Bru}_{\Pi_{n+1}(z)})$ is the disjoint union of $\mathrm{Bru}_z$ and $\mathrm{Bru}_{-z}$, where each set $\mathrm{Bru}_{z}$, $\mathrm{Bru}_{-z}$ is contractible and non-empty.

From Proposition~\ref{Bruhat1} we have the following result.

\begin{proposition}[Bruhat decomposition for $\mathrm{Spin}_{n+1}$]\label{Bruhat2}
We have the decomposition
\[ \mathrm{Spin}_{n+1}=\bigsqcup_{\tilde{P} \in \mathrm{\tilde{B}}_{n+1}^+}\mathrm{Bru}_{\tilde{P}}. \]
\end{proposition}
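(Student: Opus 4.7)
The plan is to derive the Spin decomposition directly from the $\mathrm{SO}_{n+1}$ version (Proposition~\ref{Bruhat1}) by analyzing the double cover $\Pi_{n+1}$ cell-by-cell. Since $\mathrm{Bru}_P$ is diffeomorphic to $\mathbb{R}^{\mathrm{inv}(P)}$, each Bruhat cell in $\mathrm{SO}_{n+1}$ is contractible and in particular simply connected; this is the one structural fact that makes the lifting behave well.

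First I would establish the following local picture for a fixed $P \in \mathrm{B}_{n+1}^+$. Because $\Pi_{n+1}$ is a (trivial) covering map of degree $2$ when restricted to a simply connected, path-connected subset, the preimage $\Pi_{n+1}^{-1}(\mathrm{Bru}_P)$ decomposes into exactly two connected components, each mapping homeomorphically onto $\mathrm{Bru}_P$. The two lifts $\tilde{P}$ and $-\tilde{P}$ of $P$ lie in $\tilde{\mathrm{B}}_{n+1}^+$; since they differ by the nontrivial deck transformation $z \mapsto -z$, they must lie in the two different components. By the definition given in the excerpt, these components are precisely $\mathrm{Bru}_{\tilde{P}}$ and $\mathrm{Bru}_{-\tilde{P}}$, and this agrees with the remark already recorded after the definition.

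Next I would conclude the global decomposition in two short steps. For the covering property: any $z \in \mathrm{Spin}_{n+1}$ satisfies $\Pi_{n+1}(z) \in \mathrm{Bru}_P$ for a unique $P \in \mathrm{B}_{n+1}^+$ by Proposition~\ref{Bruhat1}, so $z$ belongs to one of the two components $\mathrm{Bru}_{\tilde{P}}$ or $\mathrm{Bru}_{-\tilde{P}}$, both of which are among the sets indexed by $\tilde{\mathrm{B}}_{n+1}^+$. For disjointness, suppose $\mathrm{Bru}_{\tilde{P}_1} \cap \mathrm{Bru}_{\tilde{P}_2} \neq \emptyset$ for some $\tilde{P}_1,\tilde{P}_2 \in \tilde{\mathrm{B}}_{n+1}^+$. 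Applying $\Pi_{n+1}$ and using disjointness in Proposition~\ref{Bruhat1} forces $\Pi_{n+1}(\tilde{P}_1)=\Pi_{n+1}(\tilde{P}_2)=:P$, so both cells lie inside $\Pi_{n+1}^{-1}(\mathrm{Bru}_P)$; being connected components of this set, they must coincide, and since each cell contains a unique element of $\tilde{\mathrm{B}}_{n+1}^+$ (namely its ``center'' lift), we conclude $\tilde{P}_1 = \tilde{P}_2$.

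I do not expect a real obstacle here; the only point requiring a bit of care is the cardinality bookkeeping: one needs to observe that $|\tilde{\mathrm{B}}_{n+1}^+| = 2|\mathrm{B}_{n+1}^+|$ exactly matches the doubling of components produced by the cover, so no index is omitted and no cell is counted twice. The rest is a standard covering-space argument that leverages the contractibility of Bruhat cells, a fact already used in~\cite{SS12}.
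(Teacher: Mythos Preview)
Your argument is correct and is exactly the covering-space lift of Proposition~\ref{Bruhat1} that the paper has in mind; the paper itself gives no detailed proof, merely recording that $\Pi_{n+1}^{-1}(\mathrm{Bru}_{\Pi_{n+1}(z)})$ is the disjoint union of the contractible nonempty sets $\mathrm{Bru}_z$ and $\mathrm{Bru}_{-z}$ and then stating that the result follows from Proposition~\ref{Bruhat1}. You have simply written out the details of that implication.
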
   

In $\mathrm{Spin}_{n+1}$, there are $2^{n+1}(n+1)!$ disjoint Bruhat cells. Each lifted Bruhat cell $\mathrm{Bru}_{\tilde{P}}$ is still diffeomorphic to $\R^{\mathrm{inv}(P)}$, where $P=\Pi_{n+1}(\tilde{P}) \in \mathrm{B}_{n+1}^+$. 

Two matrices $Q \in \mathrm{SO}_{n+1}$ and $Q' \in \mathrm{SO}_{n+1}$ (respectively two spins $z \in \mathrm{Spin}_{n+1}$ and $z' \in \mathrm{Spin}_{n+1}$) are said to be \emph{Bruhat-equivalent} if they belong to the same Bruhat cell. 

Let us conclude by quoting Lemma~$3.1$ in~\cite{SS12}, which will be very important in this work.

\begin{proposition}\label{bruhathomeo}
If $Q \in \mathrm{SO}_{n+1}$ and $Q' \in \mathrm{SO}_{n+1}$ (respectively $z \in \mathrm{Spin}_{n+1}$ and $z' \in \mathrm{Spin}_{n+1}$) are Bruhat-equivalent, then the spaces $\mathcal{L}\SS^{n}(Q)$ and $\mathcal{L}\SS^{n}(Q')$ (respectively $\mathcal{L}\SS^{n}(z)$ and $\mathcal{L}\SS^{n}(z')$) are homeomorphic. 
\end{proposition}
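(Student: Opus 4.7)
The plan is to construct an explicit homeomorphism between $\mathcal{L}\SS^n(Q)$ and $\mathcal{L}\SS^n(Q')$ when $Q' = U_1 Q U_2$ with $U_1, U_2 \in \mathrm{Up}^+_{n+1}$ (and the product landing in $\mathrm{SO}_{n+1}$). By composition, it suffices to treat separately the case of pure right multiplication $Q' = Q U_2$ (modifying $\gamma$ only near $t = 1$) and pure left multiplication $Q' = U_1 Q$ (modifying $\gamma$ only near $t = 0$). The $\mathrm{Spin}_{n+1}$ version will then follow from the $\mathrm{SO}_{n+1}$ version, because a homeomorphism between the $\mathrm{SO}_{n+1}$-level spaces maps each connected component to a connected component, and the two components of $\mathcal{L}\SS^n(\Pi_{n+1}(z))$ are distinguished precisely by the sign of the lift $\tilde{\mathcal{F}}_\gamma(1) \in \{z, -z\}$, a parity preserved by continuous deformation.

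For the right-multiplication case I would phrase the modification in the Jacobian/holonomic picture of Subsection~\ref{s35}: a locally convex curve is encoded by its Frenet ODE $\mathcal{F}' = \mathcal{F} \Lambda$ with $\Lambda$ a skew-symmetric Jacobi-type matrix with positive subdiagonal. Fix a small $\epsilon > 0$, keep $\gamma|_{[0,1-\epsilon]}$ and hence $\Lambda|_{[0,1-\epsilon]}$ unchanged, and replace $\Lambda$ on $[1-\epsilon,1]$ by another admissible Jacobi matrix arranged so that the final Frenet frame becomes $QU_2$ in place of $Q$. The essential content then becomes a controllability statement: the image of the endpoint map sending the space of admissible $\Lambda$ on $[0,\epsilon]$ (with prescribed initial frame $\mathcal{F}_\gamma(1-\epsilon)$) to the endpoint frame in $\mathrm{SO}_{n+1}$ contains the entire Bruhat cell of any frame in its image; in particular, once $Q$ is reached, so is $QU_2$. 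Continuous dependence of the chosen replacement arc on $\gamma$, together with the symmetric construction going from $\mathcal{L}\SS^n(QU_2)$ back to $\mathcal{L}\SS^n(Q)$, yields the homeomorphism.

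The left-multiplication case is handled symmetrically by reparameterizing $\gamma$ to the interval $[\epsilon,1]$ and pre-concatenating a short locally convex arc on $[0,\epsilon]$ whose terminal frame is engineered so that the combined curve has the correct final Frenet frame. The main obstacle is the controllability claim above: showing that short Jacobian arcs from a fixed frame realize an \emph{entire} Bruhat cell of endpoints. I would establish this by combining the Bruhat decomposition from Subsections~\ref{s22}--\ref{s23} with a transversality/openness argument, showing that infinitesimal perturbations of $\Lambda$ supported near the endpoint, once integrated, span a subspace of $T_Q\mathrm{SO}_{n+1}$ tangent to the Bruhat cell of $Q$; hence the endpoint map is open into each Bruhat cell it meets. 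The contractibility of each Bruhat cell (diffeomorphic to $\R^{\mathrm{inv}(P)}$) together with its connectedness then upgrades this openness to surjectivity onto the full cell, completing the argument.
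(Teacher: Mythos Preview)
The paper itself does not prove this proposition; it simply quotes it as Lemma~3.1 of \cite{SS12}. Your proposal is therefore an independent attempt, and the overall idea---modify the curve near the endpoints so as to slide the terminal Frenet frame within its Bruhat cell---is reasonable in spirit and related to the chopping constructions used elsewhere in the paper. However, the argument as written has two genuine gaps.

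First, the reduction ``by composition'' to pure right and pure left multiplication is not justified as stated: if $Q' = U_1 Q U_2$ with $Q,Q' \in \mathrm{SO}_{n+1}$ and $U_1,U_2 \in \mathrm{Up}^+_{n+1}$, there is no reason for the intermediate matrix $QU_2$ (or $U_1 Q$) to lie in $\mathrm{SO}_{n+1}$, so the space $\mathcal{L}\SS^n(QU_2)$ need not make sense. This could be repaired by performing the two endpoint modifications simultaneously rather than sequentially, but you would have to say so.

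Second, and more seriously, the controllability step is incorrect. You argue that once the endpoint map (from short admissible Jacobi arcs to terminal frames) is shown to be \emph{open} into the Bruhat cell, the connectedness and contractibility of the cell ``upgrade this openness to surjectivity onto the full cell.'' This inference is false: an open map into a connected (even contractible) space need not be surjective---the inclusion of an open ball into $\R^m$ is already a counterexample. To get surjectivity you would additionally need the image to be closed in the cell, or a properness argument, or an explicit construction hitting every point of the cell; none of these is supplied, and this is exactly the nontrivial content of the statement.

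By contrast, the proof in \cite{SS12} avoids endpoint surgery and controllability altogether. It uses a global projective-type transformation: for $U \in \mathrm{Up}^+_{n+1}$ one sends a Jacobian curve $\Gamma$ to the curve obtained by applying the QR (Gram--Schmidt) orthogonalization to $U\Gamma(t)$ pointwise. A direct computation shows this preserves the Jacobian property, keeps the initial frame equal to $I$, and moves the terminal frame exactly as required within its Bruhat cell; the inverse is given by the same construction with $U^{-1}$. This yields the homeomorphism by an explicit formula, with no transversality or surjectivity issues to resolve.
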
 

\section{Spaces of curves}

\label{chapter3}

In this section we start with some definitions and then 
we characterize locally convex curves on $\SS^2$ and on $\SS^3$ (Subsection~\ref{s33}). Finally, we will characterize the Frenet frame curve associated to a locally convex curve on $\SS^2$ and on $\SS^3$ (Subsection~\ref{s35}).

\subsection{Preliminaries}\label{s33}

In this subsection we give some new definitions about locally convex and generic curves. We will deduce some fundamental properties about these curves.

\begin{definition}\label{LSn}
We define ${\mathcal{L}\SS^{n}}$ to be the set of all locally convex curves $\gamma : [0,1] \rightarrow \SS^{n}$ such that $\mathcal{F}_\gamma(0)=I$. We define ${\mathcal{G}\SS^{n}}$ to be the set of all generic curves $\gamma : [0,1] \rightarrow \SS^{n}$ such that $\mathcal{F}_\gamma(0)=I$.
\end{definition}

Clearly, $\mathcal{L}\SS^n(Q) \subset \mathcal{L}\SS^n$ and $\mathcal{G}\SS^n(Q) \subset \mathcal{G}\SS^n$.

\medskip

We will consider that our curves are smooth, but in the construction, we will not be bothered by the loss of smoothness due to juxtaposition of curves. The class of differentiability is not important: see~\cite{SS12},~\cite{Sal13} or~\cite{Alv16} for a discussion of this technical point.

\medskip

\begin{definition}\label{defc}
A curve $\gamma : [0,1] \rightarrow \SS^n$ is called \emph{globally convex} if any hyperplane $H \subseteq \mathbb{R}^{n+1}$ intersects the image of $\gamma$ in at most $n$ points, counting with multiplicity. 
\end{definition}

We need to clarify the notion of multiplicity in this definition. First, endpoints of the curve are not counted as intersections. Then, if $\gamma(t) \in H$ for some $t \in (0,1)$, the multiplicity of the intersection point $\gamma(t)$ is the smallest integer $k \geq 1$ such that
\[ \gamma^{(j)}(t) \in H, \quad 0 \leq j \leq k-1. \] 
So the multiplicity is one if $\gamma(t) \in H$ but $\gamma'(t) \notin H$, it is two if $\gamma(t) \in H$, $\gamma'(t) \in H$ but $\gamma''(t) \notin H$, and so on. Obviously, all globally convex curves are locally convex.

\medskip

Consider a curve $\gamma \in \mathcal{G}\SS^{2}(z)$. Recall that  \[ \mathbf{t}_\gamma(t):=\frac{\gamma'(t)}{||\gamma'(t)||}, \quad \mathbf{n}_\gamma(t):= \gamma(t) \times \mathbf{t}_\gamma(t) \quad \mathrm{and} \quad \kappa_\gamma(t):= \frac{\mathbf{t}_\gamma'(t)\cdot\mathbf{n}_\gamma(t)}{||\gamma'(t)||}. \]

We then define $ \mathcal{F_{\gamma}}(t)=(\gamma(t),\mathbf{t}_\gamma(t),\mathbf{n}_\gamma(t)) \in \mathrm{SO}_3$. A generic curve $\gamma:[0,1] \rightarrow \SS^2$ is locally convex if and only if $\kappa_\gamma(t)> 0$ for all $t \in (0,1)$; for a proof, see Proposition~$18$ in~\cite{Alv16}.

\medskip

Next we will consider $\gamma$ a generic curve on $\SS^3$, that is, $\gamma(t),\gamma'(t),\gamma''(t)$ are linearly independent, so that its Frenet frame $\mathcal{F}_\gamma(t)$ can be defined: $ \mathcal{F}_\gamma(t)e_1=\gamma(t), \quad \mathcal{F}_\gamma(t)e_2=\mathbf{t}_\gamma(t)=\frac{\gamma'(t)}{||\gamma'(t)||}.  $
The unit normal $\mathbf{n}_\gamma(t)$ and binormal $\mathbf{b}_\gamma(t)$ are defined by 
\[ \mathbf{n}_\gamma(t)=\mathcal{F}_\gamma(t)e_3, \quad \mathbf{b}_\gamma(t)=\mathcal{F}_\gamma(t)e_4  \]
so that  $ \mathcal{F_{\gamma}}(t)=(\gamma(t),\mathbf{t}_\gamma(t),\mathbf{n}_\gamma(t),\mathbf{b}_\gamma(t)) \in \mathrm{SO}_4. $
The geodesic curvature $\kappa_\gamma(t)$ and the geodesic torsion $\tau_\gamma(t)$ are given by:
\[ \kappa_\gamma(t):=\frac{\mathbf{t}_\gamma'(t)\cdot\mathbf{n}_\gamma(t)}{||\gamma'(t)||},  \quad \tau_\gamma(t):=\frac{-\mathbf{b}_\gamma'(t)\cdot\mathbf{n}_\gamma(t)}{||\gamma'(t)||}. \]
The geodesic curvature is never zero for generic curves. We can then characterize locally convex curves in $\SS^3$: a generic curve $\gamma:[0,1] \rightarrow \SS^3$ is locally convex if and only if $\tau_\gamma(t)> 0$ for all $t \in (0,1)$; for a proof, see Proposition~$19$ in~\cite{Alv16}.

\begin{example}\cite{SS12}\label{ex2}
Consider the curve $\xi :[0,1] \rightarrow \SS^{n}$ defined as follows. \\ For $n+1=2k$, take positive numbers $c_1,\dots, c_k$ such that $c_1^2+\cdots + c_k^2=1$ and $a_1, \dots, a_k>0$ mutually distinct, and set  
\[ \xi(t)=(c_1\cos(a_1t),c_1\sin(a_1t),\dots,c_k\cos(a_kt),c_k\sin(a_kt)).  \]
Similarly, for $n+1=2k+1$, set  
\[ \xi(t)=(c_0,c_1\cos(a_1t),c_1\sin(a_1t),\dots,c_k\cos(a_kt),c_k\sin(a_kt)). \]
In both cases, the fact that the curve $\xi$ is locally convex follows from a simple computation. 
\end{example}

In the case $n=3$, a locally convex curve looks like an ancient phone wire (see the Figure~\ref{fig:a} below).

\begin{figure}[h]
\centering
\includegraphics[scale=0.3]{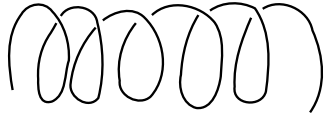}
\caption{An ancient phone wire is locally convex in $\SS^3$.}
\label{fig:a}
\end{figure}

\subsection{Holonomic and quasi-holonomic curves}\label{s35}

We will be interested in characterizing the Frenet frame curve associated to a locally convex curve. Consider a curve $ \Gamma : [0,1] \rightarrow \mathrm{SO}_{n+1} $
and define its logarithmic derivative $\Lambda(t)$ by
$ \Lambda(t)=(\Gamma(t))^{-1}\Gamma'(t) $, that is,  $\Gamma'(t)=\Gamma(t)\Lambda(t). $ Since $\Gamma$ takes values in $\mathrm{SO}_{n+1}$, $\Lambda$ takes values in its Lie algebra, that is, $\Lambda(t)$ is a skew-symmetric matrix for all $t \in [0,1]$. 

When $\Gamma=\mathcal{F}_\gamma$ is the Frenet frame curve of a locally convex curve, its logarithmic derivative $\Lambda(t)$ is not an arbitrary skew-symmetric matrix. For instance, if $\gamma : [0,1] \rightarrow \SS^2$ is locally convex, then 
\[ \mathcal{F_{\gamma}}(t)=(\gamma(t),\mathbf{t}_\gamma(t),\mathbf{n}_\gamma(t)) \in \mathrm{SO}_3 \]
and by simple computations one obtains
\begin{equation}\label{logderives2}
\Lambda_\gamma(t)=(\mathcal{F}_\gamma(t))^{-1}\mathcal{F}_\gamma'(t)=
\begin{pmatrix}
0 & -||\gamma'(t)|| & 0 \\
||\gamma'(t)|| & 0 & -||\gamma'(t)||\kappa_\gamma(t) \\
0 & ||\gamma'(t)||\kappa_\gamma(t) & 0
\end{pmatrix}.
\end{equation}
In the same way, if $\gamma : [0,1] \rightarrow \SS^3$ is locally convex, then 
\[ \mathcal{F_{\gamma}}(t)=(\gamma(t),\mathbf{t}_\gamma(t),\mathbf{n}_\gamma(t),\mathbf{b}_\gamma(t)) \in \mathrm{SO}_4 \]
and one gets
\begin{equation}\label{logderives3}
\Lambda_\gamma(t)=
\begin{pmatrix}
0 & -||\gamma'(t)|| & 0  & 0 \\
||\gamma'(t)|| & 0 & -||\gamma'(t)||\kappa_\gamma(t) & 0  \\
0 & ||\gamma'(t)||\kappa_\gamma(t) & 0 & -||\gamma'(t)||\tau_\gamma(t) \\
0 & 0 & ||\gamma'(t)||\tau_\gamma(t) & 0
\end{pmatrix}.
\end{equation}
This is in fact a general phenomenon. Let us define the set $\mathfrak{J} \subset \mathfrak{so}_{n+1}$ of Jacobi matrices, i.e., tridiagonal skew-symmetric matrices with positive subdiagonal entries, in other words, matrices of the form
\[ \begin{pmatrix}
0 & -c_1 & 0 & \ldots & 0 \\
c_1 & 0 & -c_2 &  & 0 \\
 & \ddots & \ddots & \ddots &  \\
0 &  & c_{n-1} & 0 & -c_n  \\
0 &  &  0 & c_n & 0
\end{pmatrix}, \quad c_1>0, \dots, c_n>0. \]

\begin{definition}\label{jacobian}
A curve $\Gamma : [0,1] \rightarrow \mathrm{SO}_{n+1}$ is \emph{Jacobian} if its logarithmic derivative $\Lambda(t)=(\Gamma(t))^{-1}\Gamma'(t)$ belongs to $\mathfrak{J}$ for all $t \in [0,1]$. 
\end{definition}

The interest of this definition is that Jacobian curves characterize Frenet frame curves of locally convex curves. Indeed, we have the following proposition.

\begin{proposition}\label{propjacobian}
Let $\Gamma : [0,1] \rightarrow \mathrm{SO}_{n+1}$ be a smooth curve with $\Gamma(0)=I$. Then $\Gamma$ is Jacobian if and only if there exists $\gamma \in \mathcal{L}\SS^n$ such that $\mathcal{F}_\gamma=\Gamma$.
\end{proposition}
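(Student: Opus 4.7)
The proof is an equivalence between a differential condition on $\Gamma$ and a geometric condition on $\gamma$, and the natural bridge is the identity $\Lambda(t) = \Gamma(t)^T \Gamma'(t)$, valid because $\Gamma$ takes values in $\mathrm{SO}_{n+1}$. Writing $v_i(t) = \Gamma(t) e_i$ for the columns of $\Gamma$, this identity gives the pointwise formula $\Lambda_{ij} = v_i \cdot v_j'$, and the plan is to exploit this formula in both directions, combined with standard properties of the Gram-Schmidt decomposition.

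For the direction $(\Rightarrow)$, I would assume $\gamma \in \mathcal{L}\SS^n$ with $\Gamma = \mathcal{F}_\gamma$ and exploit the Gram-Schmidt construction of $\Gamma$. The key observation is that $v_j \in \mathrm{span}(\gamma, \gamma', \dots, \gamma^{(j-1)})$ for each $j$, so differentiating gives $v_j' \in \mathrm{span}(\gamma, \gamma', \dots, \gamma^{(j)}) = \mathrm{span}(v_1, \dots, v_{j+1})$. Orthonormality of the $v_i$ then forces $\Lambda_{ij} = v_i \cdot v_j' = 0$ whenever $i > j+1$, and combined with skew-symmetry of $\Lambda$ this gives tridiagonality. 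To handle the subdiagonal entry $\Lambda_{i+1,i}$, I would write the Gram-Schmidt relation $r_{ii} v_i = \gamma^{(i-1)} - \sum_{j<i} (\gamma^{(i-1)} \cdot v_j)\, v_j$ with $r_{ii} > 0$, differentiate, and take the inner product with $v_{i+1}$; using $v_j' \perp v_{i+1}$ for $j < i$ (by the span argument above) and the fact that $\gamma^{(i)} \cdot v_{i+1} = r_{i+1,i+1} > 0$ by local convexity, I obtain $\Lambda_{i+1,i} = r_{i+1,i+1}/r_{ii} > 0$, so $\Lambda \in \mathfrak{J}$.

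For the direction $(\Leftarrow)$, I would set $\gamma(t) = \Gamma(t) e_1 = v_1(t)$ and verify that $\gamma$ is locally convex with $\mathcal{F}_\gamma = \Gamma$. Denote the (positive) subdiagonal entries of $\Lambda$ by $c_1, \dots, c_n$, with $c_0 = c_{n+1} = 0$; then the tridiagonal form of $\Lambda$ translates pointwise into $v_j' = \Gamma \Lambda e_j = c_j v_{j+1} - c_{j-1} v_{j-1}$. I would then prove by induction on $k \in \{0, 1, \dots, n\}$ the statement $\gamma^{(k)} = \sum_{j=1}^{k+1} \beta_{k,j} v_j$ with $\beta_{k,k+1} > 0$: the base case $k = 0$ is $\gamma = v_1$, and in the inductive step $\gamma^{(k+1)} = \sum_j \beta_{k,j}' v_j + \sum_j \beta_{k,j}(c_j v_{j+1} - c_{j-1} v_{j-1})$, where only the single term $\beta_{k,k+1}\, c_{k+1}\, v_{k+2}$ contributes to the coefficient of $v_{k+2}$, which is therefore a product of two positive numbers. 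Reading the induction at $k = 0, 1, \dots, n$, I obtain an upper-triangular change-of-basis matrix with positive diagonal from $(v_1, \dots, v_{n+1})$ to $(\gamma, \gamma', \dots, \gamma^{(n)})$; its positive determinant, multiplied by $\det \Gamma = 1$, certifies local convexity, and the uniqueness of the $QR$-decomposition with positive diagonal forces $\mathcal{F}_\gamma = \Gamma$, whence also $\mathcal{F}_\gamma(0) = \Gamma(0) = I$ and $\gamma \in \mathcal{L}\SS^n$.

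I do not anticipate substantive obstacles: the entire argument is bookkeeping with the Gram-Schmidt decomposition and the tridiagonal sparsity pattern of $\Lambda$. The one spot that requires real care is the inductive step in $(\Leftarrow)$, where it is essential that exactly one term lifts the top index to $v_{k+2}$ and that its coefficient is a product of two strictly positive numbers — this is the point where the strict positivity of every subdiagonal entry of $\Lambda$ (i.e., $\Lambda \in \mathfrak{J}$ rather than merely tridiagonal and skew-symmetric) is genuinely used.
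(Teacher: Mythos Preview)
Your argument is correct in both directions: the Gram--Schmidt span argument cleanly gives tridiagonality and subdiagonal positivity for the Frenet-frame logarithmic derivative, and the induction on $k$ together with uniqueness of the $QR$ decomposition recovers $\gamma$ and certifies $\mathcal{F}_\gamma = \Gamma$ from the Jacobian hypothesis. (One cosmetic point: you have the labels $(\Rightarrow)$ and $(\Leftarrow)$ swapped relative to the statement as written, and the phrase ``by local convexity'' for $r_{i+1,i+1}>0$ is really ``by linear independence,'' which local convexity guarantees; neither affects the mathematics.)

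As for comparison: the paper does not give its own proof of this proposition at all --- it simply cites Lemma~2.1 of \cite{SS12}. Your write-up is therefore a self-contained argument where the paper offers only a reference, and the approach you take (exploiting the $QR$ structure and the pointwise identity $\Lambda_{ij} = v_i \cdot v_j'$) is the standard and natural one, essentially what one would find in \cite{SS12}.
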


This is exactly the content of Lemma $2.1$ in~\cite{SS12}, to which we refer for a proof. Hence there is a one-to-one correspondence between locally convex curves in $\mathcal{L}\SS^n$ and Jacobian curves starting at the identity: if $\gamma \in \mathcal{L}\SS^n$, its Frenet frame curve is such a Jacobian curve, and conversely, if $\Gamma$ is a Jacobian curve with $\Gamma(0)=I$, then if we define $\gamma_\Gamma$ by setting $\gamma_\Gamma(t)=\Gamma(t)e_1$ then $\gamma_\Gamma \in \mathcal{L}\SS^n$.

Now consider a smooth curve $\Lambda : [0,1] \rightarrow \mathfrak{J}$. Then $\Lambda$ is the logarithmic derivative of a Jacobian curve $\Gamma : [0,1] \rightarrow \mathrm{SO}_{n+1}$ if and only if $\Gamma$ solves 
\[ \Gamma'(t)=\Gamma(t)\Lambda(t). \]
If $\Gamma$ solves the above equation, then so does $Q\Gamma$, for $Q \in \mathrm{SO}_{n+1}$, since the logarithmic derivative of $\Gamma$ and $Q\Gamma$ are equal. But the initial value problem 
\[ \Gamma'(t)=\Gamma(t)\Lambda(t), \quad \Gamma(0)=I \]
has a unique solution. Thus, given a curve $\Lambda : [0,1] \rightarrow \mathfrak{J}$, there is a unique curve $\gamma \in \mathcal{L}\SS^n$ such that $\Lambda_\gamma(t)=\mathcal{F}_\gamma(t)^{-1}\mathcal{F}'_\gamma(t)=\Lambda(t)$.

Consider the locally convex curve $\xi : [0,1] \rightarrow \SS^n$ defined in Example~\ref{ex2}. It is easy to see that the logarithmic derivative $\Lambda_\xi(t)$ is constant. From what we explained, any other curve which has constant logarithmic derivative has to be of the form $Q\xi$, for some $Q \in \mathrm{SO}_{n+1}$. More precisely, given any matrix $\Lambda \in \mathfrak{J}$, the map
\[ \Gamma_\Lambda(t)=\exp(t\Lambda) \in \mathrm{SO}_{n+1} \]
is a Jacobian curve whose logarithmic derivative is constant equal to $\Lambda$. The curve $\gamma_\Lambda$ defined by $\gamma_\Lambda(t)=\Gamma_\Lambda(t)e_1$ is then locally convex, and there exists $Q \in \mathrm{SO}_{n+1}$ such that $\gamma_\Lambda=Q\xi$.

\medskip

Now the Frenet frame curve $\mathcal{F}_\gamma : [0,1] \rightarrow \mathrm{SO}_{n+1}$ of $\gamma \in \mathcal{L}\SS^n$ can be lifted to a curve
\[ \tilde{\mathcal{F}}_\gamma : [0,1] \rightarrow \mathrm{Spin}_{n+1}, \]
that is $\mathcal{F}_\gamma = \tilde{\mathcal{F}}_\gamma \circ \Pi_{n+1}$ where $\Pi_{n+1} : \mathrm{Spin}_{n+1}\rightarrow \mathrm{SO}_{n+1}$ is the universal cover projection. Such a lifted Frenet frame curve $\tilde{\mathcal{F}}_\gamma$ is thus characterized by the following definition.

\begin{definition}\label{holonomic}
A curve $\tilde{\Gamma} : [0,1] \rightarrow \mathrm{Spin}_{n+1}$ is \emph{holonomic} if the projected curve $\Gamma=\tilde{\Gamma} \circ \Pi_{n+1}$ is a Jacobian curve.
\end{definition}    

To conclude, we can also characterize the Frenet frame curve associated to a generic curve. Let us define the set $\mathfrak{Q}$ of tridiagonal skew-symmetric matrices of the form
\[ \begin{pmatrix}
0 & -c_1 & 0 & \ldots & 0 \\
c_1 & 0 & -c_2 &  & 0 \\
 & \ddots & \ddots & \ddots &  \\
0 &  & c_{n-1} & 0 & -c_n  \\
0 &  &  0 & c_n & 0
\end{pmatrix}, \quad c_1>0, \dots,c_{n-1}>0, c_n \in \R. \]
Clearly, $\mathfrak{J}$ is contained in $\mathfrak{Q}$ and we have the following definition and proposition:

\begin{definition}\label{jacobian2}
A curve $\Gamma : [0,1] \rightarrow \mathrm{SO}_{n+1}$ is \emph{quasi-Jacobian} if its logarithmic derivative $\Lambda(t)=(\Gamma(t))^{-1}\Gamma'(t)$ belongs to $\mathfrak{Q}$ for all $t \in [0,1]$. Let $\Gamma : [0,1] \rightarrow \mathrm{SO}_{n+1}$ be a smooth curve with $\Gamma(0)=I$. Then $\Gamma$ is quasi-Jacobian if and only if there exists $\gamma \in \mathcal{G}\SS^n$ such that $\mathcal{F}_\gamma=\Gamma$.
\end{definition}

\begin{proposition} \label{propjacobian2} A curve $\tilde{\Gamma} : [0,1] \rightarrow \mathrm{Spin}_{n+1}$ is \emph{quasi-holonomic} if the projected curve $\Gamma=\tilde{\Gamma} \circ \Pi_{n+1}$ is a quasi-Jacobian curve.
\end{proposition}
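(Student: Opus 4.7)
The proposition should be read as the spin-level counterpart of Proposition~\ref{propjacobian}, bundled with the characterization inside Definition~\ref{jacobian2}: namely, a smooth curve $\tilde{\Gamma}: [0,1] \to \mathrm{Spin}_{n+1}$ with $\tilde{\Gamma}(0)=\mathbf{1}$ is quasi-holonomic if and only if there exists a generic curve $\gamma \in \mathcal{G}\SS^n$ with $\tilde{\mathcal{F}}_\gamma = \tilde{\Gamma}$. Since $\Pi_{n+1}$ is a covering map with unique path-lifting starting at $\mathbf{1}$, and since quasi-holonomic is \emph{defined} by $\Pi_{n+1}\circ\tilde{\Gamma}$ being quasi-Jacobian, the argument reduces to the analogous statement at the $\mathrm{SO}_{n+1}$ level, which I now verify by adapting the proof of Proposition~\ref{propjacobian} (Lemma~2.1 in~\cite{SS12}).

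For the forward implication, take $\gamma \in \mathcal{G}\SS^n$ and write $\Gamma = \mathcal{F}_\gamma = (u_0, \ldots, u_n)$. Because $u_0,\ldots,u_{n-1}$ are obtained by Gram-Schmidt from $\gamma, \gamma', \ldots, \gamma^{(n-1)}$, one has $u_j'(t) \in \mathrm{span}(\gamma(t),\ldots,\gamma^{(j+1)}(t)) = \mathrm{span}(u_0(t),\ldots,u_{j+1}(t))$ for every $j<n$. Hence $\Lambda_{i,j}(t) = u_i(t)\cdot u_j'(t) = 0$ whenever $i > j+1$, and skew-symmetry of $\Lambda$ (from $\Gamma \in \mathrm{SO}_{n+1}$) makes $\Lambda$ tridiagonal. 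An explicit computation of the subdiagonal $c_j = u_j\cdot u_{j-1}'$ shows $c_j > 0$ for $1\leq j \leq n-1$ (the $\gamma^{(j)}$-component of $u_{j-1}'$ projects onto $u_j$ with a positive scalar proportional to $\|\gamma'(t)\|$, inherited from the Gram-Schmidt normalization), while $c_n = u_n\cdot u_{n-1}'$ is a real number of arbitrary sign because $u_n$ is fixed only by orientation. Thus $\Lambda(t) \in \mathfrak{Q}$ for all $t$.

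Conversely, given a smooth $\Lambda : [0,1] \to \mathfrak{Q}$, the linear ODE $\Gamma' = \Gamma\Lambda$ with $\Gamma(0) = I$ has a unique global solution taking values in $\mathrm{SO}_{n+1}$ (as $\mathfrak{Q} \subset \mathfrak{so}_{n+1}$). Setting $\gamma(t) = \Gamma(t) e_1 = u_0(t)$, an induction on $k$ using the tridiagonal form of $\Lambda$ yields $\gamma^{(k)}(t) \in \mathrm{span}(u_0(t),\ldots,u_k(t))$ with $u_k$-component equal to $c_1(t)c_2(t)\cdots c_k(t)$. Since $c_1,\ldots,c_{n-1}>0$, this coefficient is nonzero for $0 \leq k \leq n-1$, so $\gamma(t), \gamma'(t), \ldots, \gamma^{(n-1)}(t)$ are pointwise linearly independent, i.e.\ $\gamma \in \mathcal{G}\SS^n$. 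Uniqueness of the Gram-Schmidt procedure then gives $\mathcal{F}_\gamma = \Gamma$, and the unique path-lifting property of $\Pi_{n+1}$ yields $\tilde{\mathcal{F}}_\gamma = \tilde{\Gamma}$.

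The main (and essentially only) technical obstacle is the Gram-Schmidt-to-Jacobi bookkeeping: computing the entries $c_j$ explicitly in terms of $\|\gamma'\|$ and the intrinsic geometric data (the explicit formulas~\eqref{logderives2} and~\eqref{logderives3} show exactly what to expect in dimensions two and three), and verifying that only the very last subdiagonal entry has unrestricted sign. This is precisely where the generic condition diverges from the locally convex one: demanding $\det(\gamma,\gamma',\ldots,\gamma^{(n)}) > 0$ would force $c_n>0$ and land $\Lambda$ in $\mathfrak{J}$, whereas the generic condition imposes no constraint on $\gamma^{(n)}$ and hence leaves $c_n$ free. No genuinely new technique is needed beyond the argument already used for Proposition~\ref{propjacobian}.
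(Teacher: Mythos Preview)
The paper gives no proof at all for Proposition~\ref{propjacobian2}: as written it is purely a definition of the term \emph{quasi-holonomic} (parallel to Definition~\ref{holonomic}), and the substantive characterization you actually establish---that a curve $\Gamma$ in $\mathrm{SO}_{n+1}$ starting at $I$ is quasi-Jacobian if and only if it is the Frenet frame of some $\gamma\in\mathcal{G}\SS^n$---is tucked into Definition~\ref{jacobian2} without proof, presumably because it follows from the same argument as Proposition~\ref{propjacobian} (Lemma~2.1 of~\cite{SS12}).

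Your proof is correct and is exactly that adaptation. The Gram--Schmidt bookkeeping showing the logarithmic derivative is tridiagonal with $c_1,\dots,c_{n-1}>0$ and $c_n$ unconstrained is right, and the converse via the ODE $\Gamma'=\Gamma\Lambda$ together with the inductive computation of the $u_k$-component of $\gamma^{(k)}$ as $c_1\cdots c_k$ is the standard and correct argument. The reduction from $\mathrm{Spin}_{n+1}$ to $\mathrm{SO}_{n+1}$ via unique path-lifting is immediate, as you note. So there is nothing to compare: you have supplied the proof the paper omits, by the route the paper implicitly has in mind.
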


\section{Decomposition of locally convex curves on $\SS^3$}

\label{chapter6}

The goal of this section is to prove Theorem~\ref{th0}, which states that a generic curve in $\SS^3$ can be decomposed as a pair of immersions in $\SS^2$. When restricted to locally convex curves, this gives Theorem~\ref{th1} which states that a locally convex curve in $\SS^3$ can be decomposed as a pair of curves in $\SS^2$, one of which is locally convex and the other is an immersion. This theorem will be proved in Subsection~\ref{s61}. We then give some examples (Subsection~\ref{s62}) illustrating this general procedure for locally convex curves.

\subsection{Proof of Theorem~\ref{th0} and Theorem~\ref{th1}}\label{s61}

Consider $\gamma \in \mathcal{G}\SS^3$ and its associated Frenet and lifted Frenet frame curve
\[ \mathcal{F}_\gamma : [0,1] \rightarrow \mathrm{SO}_4, \quad \tilde{\mathcal{F}}_\gamma : [0,1] \rightarrow \SS^3 \times \SS^3. \]
These are respectively quasi-Jacobian and quasi-holonomic curves, and we recall that any quasi-Jacobian or quasi-holonomic curve is of this form. Hence, characterizing generic curves in $\SS^3$ is the same as characterizing quasi-holonomic curves $ \tilde{\Gamma} : [0,1] \rightarrow \SS^3 \times \SS^3. $
Recall that the Lie algebra of $\SS^3$, viewed as the group of unit quaternions, is the vector space of imaginary quaternions 
\[ \mathrm{Im}\mathbb{H}:=\{b\i+c\j+d\k \; | \; (b,c,d) \in \R^3\} \]
and hence the Lie algebra of $\SS^3 \times \SS^3$ is the product $\mathrm{Im}\mathbb{H} \times \mathrm{Im}\mathbb{H}$. The logarithmic derivative of $\tilde{\Gamma}$ belongs to the Lie algebra of $\SS^3 \times \SS^3$, that is
\[ \Lambda_{\tilde{\Gamma}}(t)=\tilde{\Gamma}(t)^{-1}\tilde{\Gamma}'(t) \in \mathrm{Im}\mathbb{H} \times \mathrm{Im}\mathbb{H}, \quad t \in [0,1].  \]   
In the proposition below, we characterize the subset of $\mathrm{Im}\mathbb{H} \times \mathrm{Im}\mathbb{H}$ to which the logarithmic derivative of a quasi-holonomic curve belongs. Let \[ \tilde{\mathfrak{Q}}:=\{(b_l\i+d\k,b_r\i+d\k) \in \mathrm{Im}\mathbb{H} \times \mathrm{Im}\mathbb{H} \; | \; (b_l,b_r,d) \in \R^3, \; b_l>b_r, \; d>0 \}. \]

\begin{proposition}\label{propth1}
Let $\tilde{\Gamma} : [0,1] \rightarrow \SS^3 \times \SS^3$ be a smooth curve with $\tilde{\Gamma}(0)=(\1,\1)$. Then $\tilde{\Gamma}$ is quasi-holonomic if and only if its logarithmic derivative satisfies 
\[ \Lambda_{\tilde{\Gamma}}(t) \in \tilde{\mathfrak{Q}}, \quad t \in [0,1]. \]
Moreover, if $ \Lambda_{\tilde{\Gamma}}(t)=(b_l(t)\i+d(t)\k,b_r(t)\i+d(t)\k) \in \tilde{\mathfrak{Q}}, \quad t \in [0,1], $ then 
\[ b_l(t)-b_r(t)=||\gamma'(t)||, \quad 2d(t)=||\gamma'(t)||\kappa_\gamma(t), \quad b_l(t)+b_r(t)=||\gamma'(t)||\tau_\gamma(t)\]
where the curve $\gamma : [0,1] \rightarrow \SS^3$ is defined by $ \gamma(t)=(\Pi_4 \circ \tilde{\Gamma}(t))e_1. $
\end{proposition}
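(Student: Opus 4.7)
The plan is to translate the quasi-Jacobian condition on $\Gamma := \Pi_4 \circ \tilde\Gamma$ into a condition on the Lie-algebra-valued curve $\Lambda_{\tilde\Gamma}$. Since $\Pi_4 : \SS^3 \times \SS^3 \to \mathrm{SO}_4$ is a Lie group homomorphism, the chain rule gives $\Lambda_\Gamma(t) = (d\Pi_4)_{(\1,\1)}\bigl(\Lambda_{\tilde\Gamma}(t)\bigr)$, where the differential is a Lie algebra isomorphism from $\mathrm{Im}\,\H \times \mathrm{Im}\,\H$ onto $\mathfrak{so}_4$. Hence $\tilde\Gamma$ is quasi-holonomic if and only if $(d\Pi_4)(\Lambda_{\tilde\Gamma}(t)) \in \mathfrak{Q}$ for every $t$, and everything reduces to identifying the preimage $(d\Pi_4)^{-1}(\mathfrak{Q})$ and reading off the three nonzero subdiagonal entries.

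Rather than differentiate the cumbersome $4\times 4$ matrix formula for $\Pi_4$ entry by entry, I would work with the intrinsic description $\Pi_4(z_l,z_r)(q)=z_l q \bar z_r$. Differentiating at $(\1,\1)$ in the direction $(v_l,v_r)$ with $v_l,v_r \in \mathrm{Im}\,\H$ produces the linear map $q \mapsto v_l q - q v_r$ on $\H \cong \R^4$. Writing $v_l = b_l\i + c_l\j + d_l\k$ and $v_r = b_r\i + c_r\j + d_r\k$ and evaluating this map on the basis $(1,\i,\j,\k)$ is routine quaternion multiplication; it produces a $4 \times 4$ skew-symmetric matrix whose off-diagonal entries are built from the six combinations $b_l \pm b_r$, $c_l \pm c_r$, $d_l \pm d_r$, each appearing in a specific pattern.

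To impose the tridiagonal condition I would then set the $(1,3)$, $(1,4)$ and $(2,4)$ entries of this matrix to zero, which forces $c_l=c_r$, $d_l=d_r$ and $c_l+c_r=0$, whence $c_l=c_r=0$ and a common value $d:=d_l=d_r$. The three surviving subdiagonal entries are then $b_l-b_r$, $2d$ and $b_l+b_r$, and membership in $\mathfrak{Q}$ demands the first two to be strictly positive with no constraint on the third, which is precisely the defining condition of $\tilde{\mathfrak{Q}}$. Finally, since $\Gamma = \mathcal{F}_\gamma$ for the generic curve $\gamma(t)=\Gamma(t)e_1$, comparing the three nonzero subdiagonal entries of the resulting tridiagonal matrix with formula (\ref{logderives3}) for $\Lambda_\gamma$ yields the identifications $b_l-b_r=||\gamma'(t)||$, $2d(t)=||\gamma'(t)||\kappa_\gamma(t)$ and $b_l+b_r=||\gamma'(t)||\tau_\gamma(t)$. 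No single step is a serious obstacle; the only care required is keeping the correspondence between the basis $(1,\i,\j,\k)$ of $\H$ and the standard basis $(e_1,e_2,e_3,e_4)$ of $\R^4$ straight throughout the quaternion computation.
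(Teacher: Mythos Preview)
Your proposal is correct and follows essentially the same route as the paper: both arguments reduce the question to computing $(d\Pi_4)_{(\1,\1)}$ via the intrinsic description $q \mapsto v_l q - q v_r$, write out the resulting $4\times 4$ skew-symmetric matrix, impose the tridiagonal condition to obtain $c_l=c_r=0$ and $d_l=d_r$, and then compare the surviving subdiagonal entries $b_l-b_r$, $2d$, $b_l+b_r$ with formula~(\ref{logderives3}). The only cosmetic difference is that the paper records the six basis matrices $\i_l,\j_l,\k_l,\i_r,\j_r,\k_r$ explicitly before assembling the differential, whereas you compress this into ``routine quaternion multiplication''.
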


\begin{proof}
By definition, $\tilde{\Gamma}$ is quasi-holonomic if and only if the projected curve
\[ \Gamma=\Pi_4 \circ \tilde{\Gamma} : [0,1] \rightarrow \mathrm{SO}_4 \]
is quasi-Jacobian, and by definition, $\Gamma$ is quasi-Jacobian if only if its logarithmic derivative belongs to the subset $\mathfrak{Q}$ of matrices of the form
\[ \begin{pmatrix}
0 & -c_1 & 0 & 0  \\
c_1 & 0 & -c_2 & 0 \\
0 & c_2 & 0 & -c_3  \\
0 & 0 & c_3 & 0 
\end{pmatrix}, \quad c_1>0, c_2>0, c_3 \in \R. \]
By the chain rule we have $ \Gamma'(t)=(D_{\tilde{\Gamma}(t)}\Pi_4) \tilde{\Gamma}'(t) $
hence 
\[ \Lambda_{\Gamma}(t)=\Gamma(t)^{-1}\Gamma'(t)=\Gamma(t)^{-1}(D_{\tilde{\Gamma}(t)}\Pi_4) \tilde{\Gamma}'(t)=\Gamma(t)^{-1}(D_{\tilde{\Gamma}(t)}\Pi_4)\tilde{\Gamma}(t)\Lambda_{\tilde{\Gamma}}(t). \]
But since $\Gamma(t)^{-1}(D_{\tilde{\Gamma}(t)}\Pi_4)\tilde{\Gamma}(t)$ is the differential of $\Pi_4$ at the identity $(\1,\1)$, we obtain $ \Lambda_{\Gamma}(t)=(D_{(\1,\1)}\Pi_4) \Lambda_{\tilde{\Gamma}}(t) $
hence to prove the first part of the proposition, one needs to prove that $\mathfrak{Q}=D_{(\1,\1)}\Pi_4(\tilde{\mathfrak{Q}})$. The differential 
\[ D_{(\1,\1)}\Pi_4 : \mathrm{Im}\mathbb{H} \times \mathrm{Im}\mathbb{H} \rightarrow \mathfrak{so}_4 \]
is given by $ D_{(\1,\1)}\Pi_4(h_l,h_r) : z \in \H \mapsto h_lz-zh_r \in \H $ for $(h_l,h_r) \in \mathrm{Im}\mathbb{H} \times \mathrm{Im}\mathbb{H}$. If we let $ h_l=b_l\i+c_l\j+d_l\k, \quad h_r=b_r\i+c_r\j+d_r\k  $
then 
\[ D_{(\1,\1)}\Pi_4(h_l,h_r)z=b_l\i z+c_l\j z+d_l\k z-(b_rz\i+c_rz\j+d_rz\k). \]
Let us denote by $\i_l$, $\j_l$ and $\k_l$ the matrices in $\mathfrak{so}_4$ that correspond to left multiplication by respectively $\i$, $\j$ and $\k$; similarly we define $\i_r$, $\j_r$ and $\k_r$ the matrices in $\mathfrak{so}_4$ that correspond to right multiplication by respectively $\bar{\i}$, $\bar{\j}$ and $\bar{\k}$. These matrices are given by 
\[      
\i_l=
\begin{pmatrix}
0 & -1 & 0 & 0 \\
+1 & 0 & 0 & 0 \\
0& 0 & 0  & -1 \\
0 & 0 & +1 & 0
\end{pmatrix},
\quad
\i_r=
\begin{pmatrix}
0 & +1 & 0 & 0 \\
-1 & 0 & 0 & 0 \\
0 & 0 & 0  & -1 \\
0 & 0 & +1 & 0
\end{pmatrix},
\]

\[
\j_l=
\begin{pmatrix}
0 & 0 & -1 & 0 \\
0 & 0 & 0 & +1 \\
+1 & 0 & 0  & 0 \\
0 & -1 & 0 & 0
\end{pmatrix},
\quad 
\j_r=
\begin{pmatrix}
0 & 0 & +1 & 0 \\
0 & 0 & 0 & +1 \\
-1 & 0 & 0  & 0 \\
0 & -1 & 0 & 0
\end{pmatrix},
\]
\[
\k_l=
\begin{pmatrix}
0 & 0 & 0 & -1 \\
0 & 0 & -1 & 0 \\
0 & +1 & 0  & 0 \\
+1 & 0 & 0 & 0
\end{pmatrix},
\quad
\k_r=
\begin{pmatrix}
0 & 0 & 0 & +1 \\
0 & 0 & -1 & 0 \\
0 & +1 & 0  & 0 \\
-1 & 0 & 0 & 0
\end{pmatrix}.
\]
We can then express $D_{(\1,\1)}\Pi_4(h_l,h_r)$ in matrix notation:
\begin{equation*} 
D_{(\1,\1)}\Pi_4(h_l,h_r)=
\begin{pmatrix}
0 & -(b_l-b_r)& -(c_l-c_r) & -(d_l-d_r) \\
b_l-b_r & 0 & -(d_l+d_r) &  -(-c_l-c_r) \\
c_l-c_r& d_l+d_r  & 0 & -(b_l+b_r)\\
d_l-d_r & -c_l-c_r & b_l+b_r  & 0
\end{pmatrix}.
\end{equation*}
From this expression, it is clear that $(h_l,h_r) \in \tilde{\mathfrak{Q}}$ if and only if $D_{(\1,\1)}\Pi_4(h_l,h_r) \in \mathfrak{Q}$. This proves the equality $\mathfrak{Q}=D_{(\1,\1)}\Pi_4(\tilde{\mathfrak{Q}})$, and hence the first part of the proposition.

Concerning the second part of the proposition, if
\[ \Lambda_{\tilde{\Gamma}}(t)=(b_l(t)\i+d(t)\k,b_r(t)\i+d(t)\k) \in \tilde{\mathfrak{Q}}, \quad t \in [0,1], \] 
then $\Lambda_{\Gamma}(t)=D_{(\1,\1)}\Pi_4(\Lambda_{\tilde{\Gamma}}(t))$ is equal to
\begin{equation*} 
\begin{pmatrix}
0 & -(b_l(t)-b_r(t))& 0 & 0 \\
b_l(t)-b_r(t) & 0 & -2d(t) &  0 \\
0 & 2d(t)  & 0 & -(b_l(t)+b_r(t))\\
0 & 0 & b_l(t)+b_r(t)  & 0
\end{pmatrix}.
\end{equation*}  
But recall (see~\eqref{logderives3}, Subsection~\ref{s35}) that we also have
\begin{equation*}
\Lambda_\Gamma(t)=\Lambda_\gamma(t)=
\begin{pmatrix}
0 & -||\gamma'(t)|| & 0  & 0 \\
||\gamma'(t)|| & 0 & -||\gamma'(t)||\kappa_\gamma(t) & 0  \\
0 & ||\gamma'(t)||\kappa_\gamma(t) & 0 & -||\gamma'(t)||\tau_\gamma(t) \\
0 & 0 & ||\gamma'(t)||\tau_\gamma(t) & 0
\end{pmatrix}
\end{equation*}
where $ \gamma(t)=\Gamma(t)e_1=(\Pi_4 \circ \tilde{\Gamma}(t))e_1. $
So a simple comparison between the two expressions of $\Lambda_\Gamma(t)$ proves the second part of the proposition.
\end{proof}

\medskip

This proposition will allow us to prove Theorem~\ref{th0}.

\begin{proof}[Proof of Theorem~\ref{th0}]
Let $\gamma \in \mathcal{G}\SS^3(z_l,z_r)$. Consider its Frenet frame curve $\mathcal{F}_\gamma(t)$, its lifted Frenet frame curve $\tilde{\Gamma}(t)=\tilde{\mathcal{F}}_\gamma(t)$ and the logarithmic derivative
\[ \Lambda_{\tilde{\Gamma}}(t)=\tilde{\Gamma}(t)^{-1}\tilde{\Gamma}'(t). \]
From Subsection~\ref{s35}, we know that $\mathcal{F}_\gamma$ is quasi-Jacobian, hence $\tilde{\Gamma}=\tilde{\mathcal{F}}_\gamma$ is quasi-holonomic. Thus we can apply Proposition~\ref{propth1} and we can uniquely write
$ \Lambda_{\tilde{\Gamma}}(t)=(b_l(t)\i+d(t)\k,b_r(t)\i+d(t)\k) $
with 
\[ b_l(t)-b_r(t)=||\gamma'(t)||, \quad 2d(t)=||\gamma'(t)||\kappa_\gamma(t), \quad b_l(t)+b_r(t)=||\gamma'(t)||\tau_\gamma(t). \]
Equivalently,
\begin{equation}\label{bbd}
\begin{cases}
d(t)=||\gamma'(t)||\kappa_\gamma(t)/2, \\
b_l(t)=||\gamma'(t)||(\tau_\gamma(t)+1)/2, \\ 
b_r(t)=||\gamma'(t)||(\tau_\gamma(t)-1)/2.
\end{cases}  
\end{equation}
Let us then define the curves $ \tilde{\Gamma}_l : [0,1] \rightarrow \SS^3, \quad \tilde{\Gamma}_r : [0,1] \rightarrow \SS^3 $ by 
\[ \tilde{\Gamma}_l(0)=\mathbf{1}, \quad \tilde{\Gamma}_l(1)=z_l,\quad \Lambda_{\tilde{\Gamma}_l}(t)=b_l(t)\i+d(t)\k \in \mathrm{Im}\H, \quad \mathrm{and} \]  
\[ \tilde{\Gamma}_r(0)=\mathbf{1}, \quad \tilde{\Gamma}_r(1)=z_r,\quad \Lambda_{\tilde{\Gamma}_r}(t)=b_r(t)\i+d(t)\k \in \mathrm{Im}\H.  \]
The curves $\tilde{\Gamma}_l$ and $\tilde{\Gamma}_r$ are uniquely defined. Let us further define
\[ \Gamma_l:=\Pi_3 \circ \tilde{\Gamma}_l : [0,1] \rightarrow \mathrm{SO}_3, \quad \Gamma_r:=\Pi_3 \circ \tilde{\Gamma}_r : [0,1] \rightarrow \mathrm{SO}_3. \]
We want to compute the logarithmic derivative of $\Gamma_l$ and $\Gamma_r$. The differential of $\Pi_3$ at $\mathbf{1}$ can be computed exactly as we computed the differential of $\Pi_4$ at $(\mathbf{1},\mathbf{1})$ (in the proof of Proposition~\ref{propth1});
we have $ D_{\1}\Pi_3 : \mathrm{Im}\H \rightarrow \mathfrak{so}_3 $ and for $h=(b\i+c\j+d\k) \in \mathrm{Im}\H$, we can write in matrix notation
\begin{equation*} 
D_{\1}\Pi_3(h)=
\begin{pmatrix}
0 & -2d & -2c  \\
2d & 0 & -2b  \\
2c & 2b  & 0 \\
\end{pmatrix}.
\end{equation*}
From this expression we obtain
\begin{equation}\label{exp1}
\Lambda_{\Gamma_l}(t)=D_{\1}\Pi_3(\Lambda_{\tilde{\Gamma}_l}(t))=
\begin{pmatrix}
0 & -2d(t) & 0  \\
2d(t) & 0 & -2b_l(t)  \\
0 & 2b_l(t)  & 0 \\
\end{pmatrix}
\end{equation} 
and
\begin{equation*}\label{exp2}
\Lambda_{\Gamma_r}(t)=D_{\1}\Pi_3(\Lambda_{\tilde{\Gamma}_r}(t))=
\begin{pmatrix}
0 & -2d(t) & 0  \\
2d(t) & 0 & -2b_r(t)  \\
0 & 2b_r(t)  & 0 \\
\end{pmatrix}. 
\end{equation*}
From~\eqref{bbd}, we see that $d(t)>0$ and $b_l(t) \in \R$, hence $\Gamma_l$ is a quasi-Jacobian curve, and therefore if we define $ \gamma_l(t):=\Gamma_l(t)e_1 $
then $\gamma_l \in \mathcal{G}\SS^2(z_l)$. Moreover, recall from~\eqref{logderives2}, Subsection~\ref{s35}, that
\begin{equation*}
\Lambda_{\Gamma_l}(t)=\Lambda_{\gamma_l}(t)=
\begin{pmatrix}
0 & -||\gamma_l'(t)|| & 0  \\
||\gamma_l'(t)|| & 0 & -||\gamma_l'(t)||\kappa_{\gamma_l}(t)  \\
0 & ||\gamma_l'(t)||\kappa_{\gamma_l}(t)  & 0 \\
\end{pmatrix}
\end{equation*}
so that comparing this with~\eqref{exp1} and recalling~\eqref{bbd}, we find
\[ ||\gamma_l'(t)||=2d(t)=||\gamma'(t)||\kappa_{\gamma}(t) \quad \mathrm{and} \]
\[ \kappa_{\gamma_l}(t)=\frac{2b_l(t)}{ ||\gamma_l'(t)||}=\frac{||\gamma_l'(t)||(\tau_\gamma(t)+1)}{ ||\gamma_l'(t)||\kappa_\gamma(t)}=\frac{\tau_\gamma(t)+1}{\kappa_\gamma(t)}. \]
Now $\Gamma_r$ is also a quasi-Jacobian curve, hence if we define $ \gamma_r(t):=\Gamma_r(t)e_1, $ then $\gamma_r \in \mathcal{G}\SS^2(z_r)$, and as before, we have
\begin{equation*}
\Lambda_{\Gamma_r}(t)=\Lambda_{\gamma_r}(t)=
\begin{pmatrix}
0 & -||\gamma_r'(t)|| & 0  \\
||\gamma_r'(t)|| & 0 & -||\gamma_r'(t)||\kappa_{\gamma_r}(t)  \\
0 & ||\gamma_r'(t)||\kappa_{\gamma_r}(t)  & 0 \\
\end{pmatrix}
\end{equation*} 
\[ \mathrm{and} \quad ||\gamma_r'(t)||=||\gamma'(t)||\kappa_{\gamma}(t), \quad \kappa_{\gamma_r}(t)=\frac{\tau_\gamma(t)-1}{\kappa_\gamma(t)}. \]
This shows that given $\gamma \in \mathcal{L}\SS^3(z_l,z_r)$, there exists a unique pair of curves $(\gamma_l,\gamma_r)$, with $\gamma_l \in \mathcal{G}\SS^2(z_l)$ and $\mathcal{G}\SS^2(z_r)$ such that $ ||\gamma_l'(t)||=||\gamma_r'(t)||, \quad \kappa_{\gamma_l}(t)>\kappa_{\gamma_r}(t) $
and moreover $ ||\gamma_l'(t)||=||\gamma_r'(t)||=||\gamma'(t)||\kappa_{\gamma}(t), \quad \kappa_{\gamma_l}(t)=\frac{\tau_\gamma(t)+1}{\kappa_\gamma(t)}, \quad \kappa_{\gamma_r}(t)=\frac{\tau_\gamma(t)-1}{\kappa_\gamma(t)}.  $ This defines a map $\gamma \mapsto (\gamma_l,\gamma_r)$, which, by construction is continuous. Conversely, given a pair of curves $(\gamma_l,\gamma_r)$, with $\gamma_l \in \mathcal{G}\SS^2(z_l)$ and $\mathcal{G}\SS^2(z_r)$ such that
$ ||\gamma_l'(t)||=||\gamma_r'(t)||, \quad \kappa_{\gamma_l}(t)>\kappa_{\gamma_r}(t), $ by simply reversing the construction above, we can find a unique curve $\gamma \in \mathcal{G}\SS^3(z_l,z_r)$ such that \[ \kappa_\gamma(t)=\frac{2}{\kappa_{\gamma_l}(t)-\kappa_{\gamma_r}(t)}, \] 
\[ \tau_\gamma(t)=\frac{\kappa_\gamma(t)(\kappa_{\gamma_l}(t)+\kappa_{\gamma_r}(t))}{2}=\frac{\kappa_{\gamma_l}(t)+\kappa_{\gamma_r}(t)}{\kappa_{\gamma_l}(t)-\kappa_{\gamma_r}(t)}, \] 
\[ ||\gamma'(t)||=\frac{||\gamma_l'(t)||}{\kappa_\gamma(t)}=\frac{||\gamma_l'(t)||(\kappa_{\gamma_l}(t)-\kappa_{\gamma_r}(t))}{2}.  \]
This also defines a map $(\gamma_l,\gamma_r) \mapsto \gamma$, which is also clearly continuous, and this completes the proof of the theorem.
\end{proof}

The proof of Theorem~\ref{th1} follows directly from the statement of Theorem~\ref{th0}. Alternatively, one can proceed exactly as in the proof of Theorem~\ref{th0}, replacing quasi-holonomic curves (respectively quasi-Jacobian curves) by holonomic curves (respectively Jacobian curves), replacing $\mathfrak{Q}$ and $\tilde{\mathfrak{Q}}$ by respectively $\mathfrak{J}$ and  
\[ \tilde{\mathfrak{J}}:=\{(b_l\i+d\k,b_r\i+d\k) \in \mathrm{Im}\mathbb{H} \times \mathrm{Im}\mathbb{H} \; | \; (b_l,b_r,d) \in \R^3, \; b_l>|b_r|, \; d>0 \}. \]

A locally convex curve in $\SS^3$ is rather hard to understand from a geometrical point of view; Theorem~\ref{th1} allows us to see such a curve as a pair of curves in $\SS^2$, a situation where one can use geometrical intuition.

\subsection{Examples} \label{s62}

Before the examples let's introduce some notation that is going to be useful for what follows.

For a real number $0<c \leq 2\pi$, let $\sigma_c : [0,1] \rightarrow \SS^2$ be the unique circle of length $c$, that is $||\sigma_c'(t)||=c$, with fixed initial and final Frenet frame equals to the identity (see Figure~\ref{fig:b}). Setting $c=2\pi\sin\rho$ (where $\rho \in (0,\pi/2]$ is the radius of curvature), this curve can be given by the following formula
\[ \sigma_c(t)=\cos\rho(\cos\rho,0,\sin\rho)+\sin\rho(\sin\rho\cos(2\pi t), \sin(2\pi t), -\cos\rho\cos(2\pi t)). \] 
The geodesic curvature of this curve is given by $\cot(\rho) \in [0,+\infty)$. Given $m>0$, let us define the curve $\sigma_c^m$ as the curve $\sigma_c$ iterated $m$ times, that is 
\[ \sigma_c^m(t)=\sigma_c(mt), \quad t \in [0,1]. \]   

\begin{figure}[H]
\centering
\includegraphics[scale=0.3]{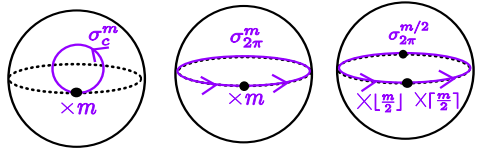}
\caption{The curves $\sigma_c^m$, $\sigma_{2\pi}^m$ and $\sigma_{2\pi}^{m/2}$.}
\label{fig:b}
\end{figure}

\begin{example}\label{family1.1}

This first example (see Figure~\ref{fig:c}) is a convex curve $\gamma_{1}^{1} \in \mathcal{L}\SS^3(-\1,\k).$ Consider $\Gamma_{1}^{1}:[0,1] \rightarrow \mathrm{SO}_{4}, \; t \mapsto  \exp(t \Lambda_{\Gamma_{1}^{1}}(t)),$ where 
\[  
\Lambda_{\Gamma_1^1}(t)=\frac{\pi}{2}
\begin{pmatrix}
0 & -\sqrt{3} & 0  & 0 \\
\sqrt{3} & 0 & -2 & 0  \\
0 & 2 & 0 & -\sqrt{3} \\
0 & 0 & \sqrt{3} & 0
\end{pmatrix}.
\]

Define $\gamma_{1}^{1}(t) := \Gamma_1^1(t)(e_1)$. 

\begin{figure}[H]
\centering
\includegraphics[scale=0.3]{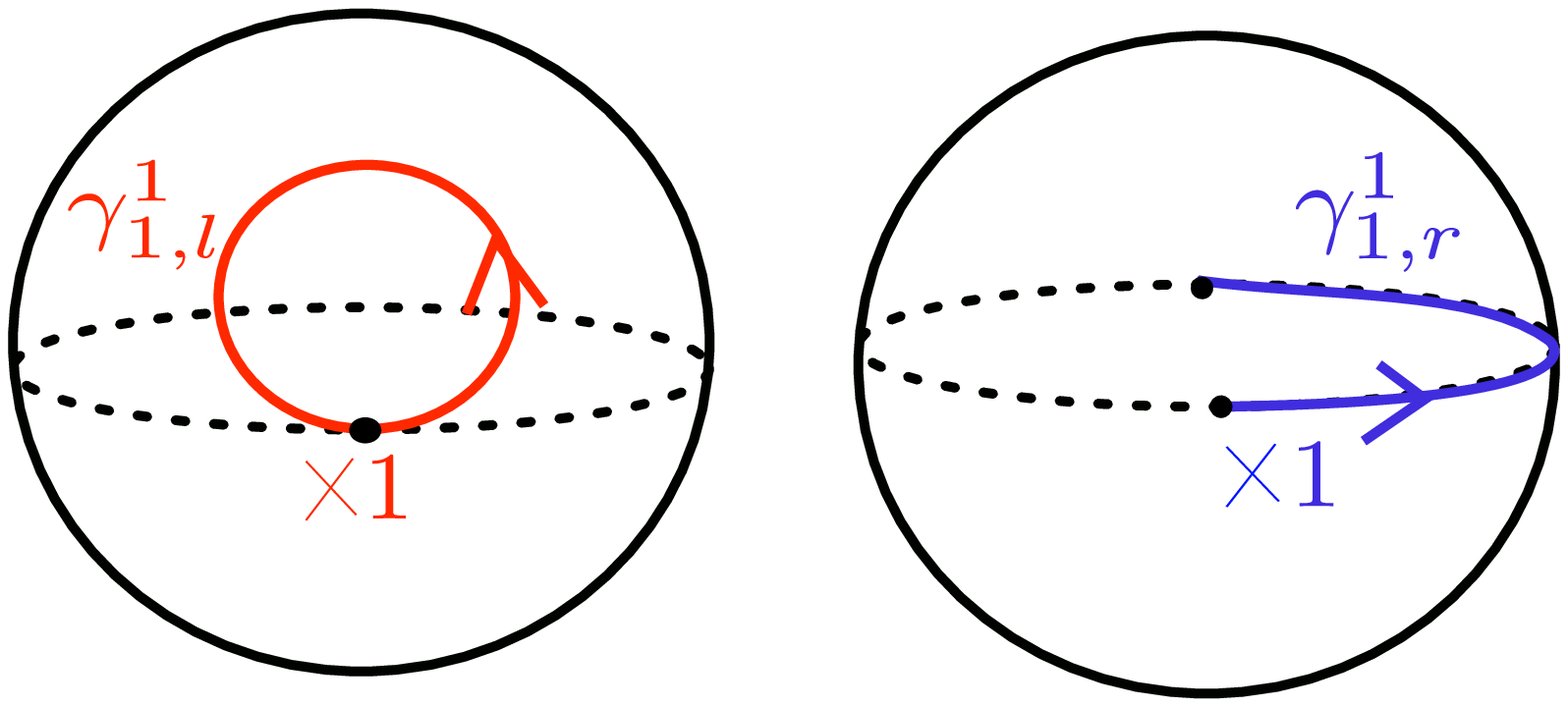}
\caption{The curve $\gamma_1^1$, where $\gamma_{1,l}^1 = \sigma_\pi^1$ and $\gamma_{1,r}^1 = \sigma_{2\pi}^{1/2}$.}
\label{fig:c}
\end{figure}

\end{example}

\begin{example}\label{family1.2}

This second example (see Figure~\ref{fig:d}) also is a convex curve; denoted by $\gamma_{1}^{2} \in \mathcal{L}\SS^3(\1,-\1).$ Consider $\Gamma_{1}^{2}:[0,1] \rightarrow \mathrm{SO}_{4}, \; t \mapsto  \exp(t \Lambda_{\Gamma_{1}^{2}}(t)),$ where 
\[  
\Lambda_{\Gamma_1^2}(t)=\frac{\pi}{2}
\begin{pmatrix}
0 & -2\sqrt{3} & 0  & 0 \\
2\sqrt{3} & 0 & -4 & 0  \\
0 & 4 & 0 & -2\sqrt{3} \\
0 & 0 & 2\sqrt{3} & 0
\end{pmatrix}.
\]

Define $\gamma_{1}^{2}(t) := \Gamma_1^2(t)(e_1)$.

\begin{figure}[H]
\centering
\includegraphics[scale=0.3]{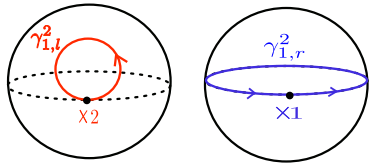}
\caption{The curve $\gamma_1^2$, where $\gamma_{1,l}^2 = \sigma_\pi^2$ and $\gamma_{1,r}^2 = \sigma_{2\pi}^{1}$.}
\label{fig:d}
\end{figure}
\end{example}

\section{Spaces $\mathcal{L}\SS^3(\1,-\1)$ and $\mathcal{L}\SS^3(-\1,\1)$}

\label{chapter8}

Recall that the spaces we are interested in are 
$ \mathcal{L}\SS^3(\1,-\1)$ and $\mathcal{L}\SS^3(-\1,\1). $
In each case, the final lifted Frenet frame does not belong to an open Bruhat cell.

Using the chopping operation, we can replace these spaces by other equivalent spaces where the final lifted Frenet frame does belong to an open Bruhat cell.

\begin{proposition}\label{spaces}
We have homeomorphisms

\medskip

$ \quad \quad \mathcal{L}\SS^3(\1,-\1) \simeq \mathcal{L}\SS^3(-\1,\k) \quad $ and $ \quad \mathcal{L}\SS^3(-\1,\1) \simeq \mathcal{L}\SS^3(\1,-\k). $

\end{proposition}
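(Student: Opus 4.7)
The plan is to use the chopping operation to shift the final lifted Frenet frame out of its (non-open) Bruhat cell into an open one. First I will pin down the algebraic setup. From the formula $\Pi_4(z_l, z_r)(q) = z_l q \overline{z_r}$ recalled in Subsection~\ref{s21}, one has $\Pi_4(\1, -\1) = \Pi_4(-\1, \1) = -I$. Since $-I$ is the diagonal signed permutation whose underlying permutation is the identity, $\mathrm{inv}(-I) = 0$, and because $\mathrm{Up}^+_4 \cap \mathrm{SO}_4 = \{I\}$ one obtains $\mathrm{Bru}_{-I} = \{-I\}$; correspondingly the lifted Bruhat cell $\mathrm{Bru}_{(\1,-\1)}$ in $\mathrm{Spin}_4$ is the singleton $\{(\1,-\1)\}$, and in particular is not open. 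By contrast, a direct computation shows that $\Pi_4(-\1, \k)$ and $\Pi_4(\1, -\k)$ are both the signed permutation $q \mapsto q\k$, whose underlying permutation is $\rho(i) = 5 - i$ with maximal $\mathrm{inv} = 6$; hence both target spins lie in open Bruhat cells of $\mathrm{SO}_4$.

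Next, a short quaternion computation identifies a single right multiplier realizing both shifts:
\[ (\1, -\1) \cdot (-\1, -\k) = (-\1, \k), \qquad (-\1, \1) \cdot (-\1, -\k) = (\1, -\k). \]
So right multiplication by $w := (-\1, -\k)$ moves each source spin into the desired open cell.

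Finally, I apply the chopping operation. Fix a locally convex curve $\eta \in \mathcal{L}\SS^3(-\1, -\k)$ (which is non-empty by Section~\ref{chapter6}; one may take it convex, analogously to Example~\ref{family1.1}). Given $\gamma \in \mathcal{L}\SS^3(\1, -\1)$, I attach to the endpoint of $\gamma$ a left-translate of $\eta$ so that the Frenet frames agree at the junction, and then reparametrize to $[0,1]$. Because $\tilde{\mathcal{F}}_\gamma(1) \cdot \tilde{\mathcal{F}}_\eta(1) = (\1,-\1)\cdot(-\1,-\k) = (-\1,\k)$, the resulting curve lies in $\mathcal{L}\SS^3(-\1, \k)$. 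The analogous construction with $(-\1, \1)$ in place of $(\1, -\1)$ produces a map into $\mathcal{L}\SS^3(\1, -\k)$. The chopping operation is designed precisely so that both this concatenation and its inverse (obtained by truncating the attached tail) are continuous, so both are homeomorphisms.

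The principal obstacle is the homeomorphism property of chopping itself: one must verify that every curve in the target space admits a unique, continuously-varying decomposition as a locally convex initial segment ending at the original source spin followed by the chosen tail $\eta$, so that truncation is well-defined and continuous. This structural property is precisely the content of the chopping lemma (as developed in~\cite{SS12}); once granted, the rest of the argument is a direct verification.
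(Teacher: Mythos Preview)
Your Bruhat-cell computations are correct and you rightly identify the chopping lemma from~\cite{SS12} as the tool behind the statement. The paper's own proof is in fact just a one-line citation of that lemma, so on the level of strategy you are aligned with it.

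The gap is in your description of the mechanism. Concatenation with a \emph{fixed} curve $\eta \in \mathcal{L}\SS^3(-\1,-\k)$ does not produce a homeomorphism: the map $\gamma \mapsto \gamma \ast (\mathcal{F}_\gamma(1)\,\eta)$ is continuous and injective, but its image consists only of those curves in $\mathcal{L}\SS^3(-\1,\k)$ whose terminal segment is the specific (translated) arc $\eta$. A generic curve in $\mathcal{L}\SS^3(-\1,\k)$ has no such decomposition, so the ``truncation inverse'' you describe is not defined. Your claim that ``every curve in the target space admits a unique \ldots\ decomposition \ldots\ followed by the chosen tail $\eta$'' is exactly what fails, and the chopping lemma does not assert this.

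What the chopping lemma in~\cite{SS12} actually does is quite different: one removes (or appends) an \emph{arbitrarily short} arc at the endpoint of $\gamma$. For a locally convex curve the lifted Frenet frame crosses Bruhat strata in a prescribed order, so for all sufficiently small $\epsilon$ the frame $\tilde{\mathcal{F}}_\gamma(1\mp\epsilon)$ lies in a single open Bruhat cell independent of $\gamma$; one then composes with the homeomorphism of Proposition~\ref{bruhathomeo} to land on the standard representative. The inverse is built the same way, and both directions depend on the curve only through a short germ at the endpoint, not through a globally fixed tail. Once you replace your concatenation construction with this short-arc chopping, the argument goes through; the algebraic identification of the target cells $(\mp\1,\pm\k)$ that you carried out is exactly the verification one needs.
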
 

\begin{proof}
This is an application of the chopping lemma (Proposition~$6.4$ in \cite{SS12}); see also Proposition~$70$ in~\cite{Alv16}.
\end{proof}

In the sequel, when convenient, we will look at the spaces
$ \mathcal{L}\SS^3(-\1,\k) $ and $ \mathcal{L}\SS^3(\1,-\k). $
The spins (or pair of quaternions) $(\1,-\k)$ and $(-\1,\k)$ belong to open Bruhat cells. 

In this section we prove our main result: Theorem~\ref{th5} (see Subsection~\ref{s85}). In particular, the spaces $\mathcal{L}\SS^3(-\1,\1) \simeq \mathcal{L}\SS^3(\1,-\k)$ and $\mathcal{L}\SS^3(\1,-\1) \simeq \mathcal{L}\SS^3(-\1,\k)$ are not homotopically equivalent to the space of generic curves.

\subsection{Adding loops and spirals}\label{s81}

In this subsection, we describe an operation which geometrically consists in adding a pair of loops to a generic curve in $\SS^2$, and adding a closed spiral to a generic curve in $\SS^3$. In order to avoid repeating definitions, we will describe many constructions in $\SS^n$ but we are interested in $n=2$ and $n=3$. We will study in more detail the case $n=3$ in Subsection~\ref{s83}.

For $n=2$ or $n=3$, let us fix an element $\omega_n \in \mathcal{L}\SS^n(\1)$. For $n=2$, we choose $ \omega_2=\sigma_c^2 \in \mathcal{L}\SS^2(\1) $ where $0<c<2\pi$. For $n=3$, we choose $\omega_3=\gamma_1^4 \in \mathcal{L}\SS^3(\1,\1)$,
with

\begin{eqnarray*}
\gamma_1^4(t) & = & \left(\frac{1}{4}\cos\left(6t\pi \right)+\frac{3}{4}\cos\left(2t\pi \right)\right., \;   \frac{\sqrt{3}}{4}\sin\left(6t\pi \right)+\frac{\sqrt{3}}{4}\sin\left(2t\pi \right), \\
&  &  \quad \frac{\sqrt{3}}{4}\cos\left(2t\pi \right)-\frac{\sqrt{3}}{4}\cos\left(6t\pi \right), \;
\left.\frac{3}{4}\sin\left(2t\pi\right)-\frac{1}{4}\sin\left(6t\pi\right)\right).
\end{eqnarray*}

Also the left and right part of this curve are given by
\[  \gamma_{1,l}^4=\sigma_{\pi}^4 \in \mathcal{L}\SS^2(\1), \quad \gamma_{1,r}^4 =\sigma_{2\pi}^{2} \in \mathcal{G}\SS^2(\1). \]

Coming back to the general case let us now define the operation of adding the closed curve $\omega_n$ to some curve $\gamma \in \mathcal{G}\SS^n(z)$ at some time $t_0 \in [0,1]$.

\begin{definition}\label{adding}
Take $\gamma \in \mathcal{G}\SS^n(z)$, and choose some point $t_0 \in [0,1]$. We define the curve $\gamma \ast_{t_0} \omega_n \in \mathcal{G}\SS^n(z)$ as follows. Given $\varepsilon>0$ sufficiently small, for $t_0 \in (0,1)$ we set
\begin{equation*}
\gamma \ast_{t_0} \omega_n(t)=
\begin{cases}
\gamma(t), & 0 \leq t \leq t_0-2\varepsilon \\
\gamma(2t-t_0+2\varepsilon), & t_0-2\varepsilon \leq t \leq t_0-\varepsilon \\
\mathcal{F}_\gamma(t_0)\omega_n\left(\frac{t-t_0+\varepsilon}{2\varepsilon}\right), & t_0-\varepsilon \leq t \leq t_0+\varepsilon \\
\gamma(2t-t_0-2\varepsilon), & t_0+\varepsilon \leq t \leq t_0+2\varepsilon \\
\gamma(t), & t_0+2\varepsilon \leq t \leq 1.
\end{cases}
\end{equation*}
For $t_0=0$, we set
\begin{equation*}
\gamma \ast_{0} \omega_n(t)=
\begin{cases}
\omega_n\left(\frac{t}{\varepsilon}\right), & 0 \leq t \leq \varepsilon\\
\gamma(2t-2\varepsilon), & \varepsilon \leq t \leq 2\varepsilon \\
\gamma(t), & 2\varepsilon \leq t \leq 1,
\end{cases}
\end{equation*}
and for $t_0=1$, we set
\begin{equation*}
\gamma \ast_{1} \omega_n(t)=
\begin{cases}
\gamma(t), & 0 \leq t \leq 1-2\varepsilon \\
\gamma(2t-1+2\varepsilon), & 1-2\varepsilon \leq t \leq 1-\varepsilon \\
\omega_n\left(\frac{t-1+\varepsilon}{\varepsilon}\right), & 1-\varepsilon \leq t \leq 1.
\end{cases}
\end{equation*}
\end{definition}

\begin{figure}[h]
\centering
\includegraphics[scale=0.5]{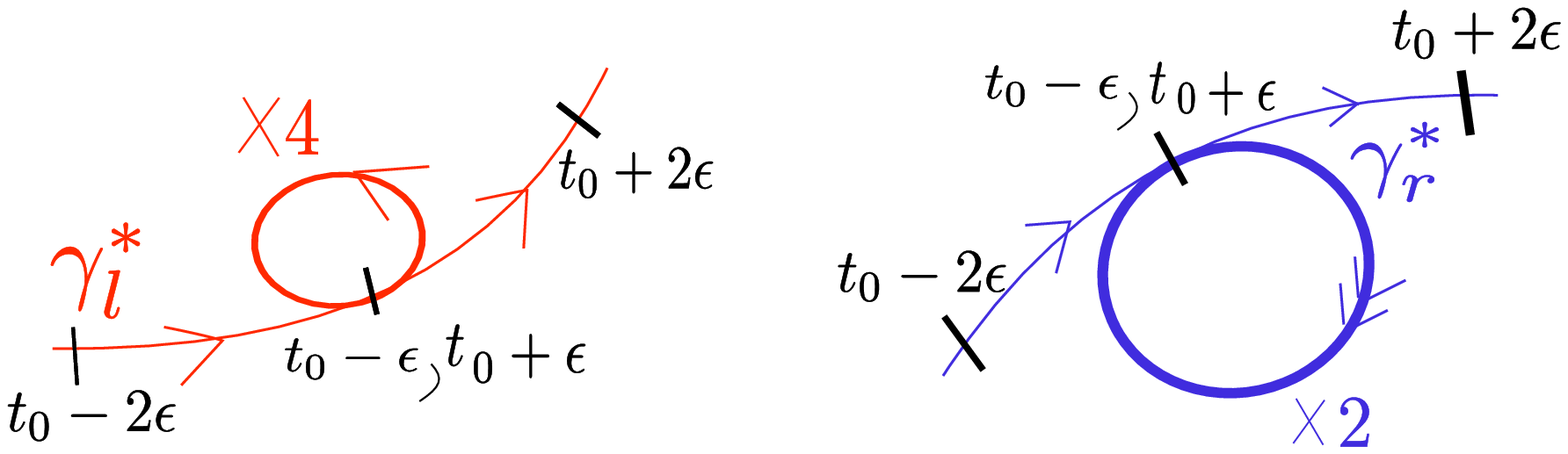}
\caption{Definition of the curve $\gamma^\ast=(\gamma_l^\ast,\gamma_r^\ast) \in \mathcal{L}\SS^3(z_l,z_r)$.}
\label{fig:m}
\end{figure}

This operation can be understood as follows (see Figure~\ref{fig:m} for an illustration in the case $n=3$). For $t_0 \in (0,1)$, we start by following the curve $\gamma$ as usual, then we speed a little slightly before $t_0$ in order to have time to insert $\omega_n$ at time $t_0$ ($\omega_n$ was moved to the correct position by a multiplication with $\mathcal{F}_\gamma(t_0)$), we speed again a little and finally at the end we follow $\gamma$ as usual. For $t_0=0$ or $t_0=1$, we have a similar interpretation. The precise value of $\varepsilon$ is not important; a different value will yield a different parametrization but the same curve. 

The precise choice of $\omega_n$ will not be important either. Indeed, the space $\mathcal{L}\SS^n(\1)$ is path-connected for $n=2$ and $n=3$ hence if we choose any other element $\omega_n' \in \mathcal{L}\SS^n(\1)$, a homotopy between $\omega_n$ and $\omega_n'$ in $\mathcal{L}\SS^n(\1)$ will give a homotopy between the curves $\gamma \ast_{t_0} \omega_n $ and $\gamma \ast_{t_0} \omega_n'$ in $\mathcal{L}\SS^n(z)$. We will see later that the homotopy class of $\gamma \ast_{t_0} \omega_n $ is the only information we will be interested in. Therefore, to simplify notations, in the sequel we will write $\gamma_{t_0}^\ast$ instead of $\gamma \ast_{t_0} \omega_n$. 

It is clear from Definition~\ref{adding} that if $\gamma \in \mathcal{L}\SS^n(z)$, then $\gamma_{t_0}^\ast \in \mathcal{L}\SS^n(z)$.  

\begin{definition}\label{tight}
Let $K$ be a compact set. A continuous map $\alpha : K \rightarrow \mathcal{L}\SS^n(z)$ is \emph{loose} if there exist a continuous map $t_0 : K \rightarrow [0,1]$ and a homotopy $A : K \times [0,1] \rightarrow \mathcal{L}\SS^n(z)$ such that for all $s \in K$: $ A(s,0)=\alpha(s), \quad A(s,1)=\alpha(s)_{t_0(s)}^\ast. $
If the map $\alpha : K \rightarrow \mathcal{L}\SS^n(z)$ is not loose, then we call it \emph{tight}.
\end{definition}

If we identify $\alpha$ with a continuous (and hence uniform) family of curves $\alpha(s) \in \mathcal{L}\SS^n(z)$, $s \in K$, then $\alpha$ is loose if each curve $\alpha(s)$ is homotopic (with a homotopy depending continuously on $s \in K$) to the curve $\alpha(s)_{t_0(s)}^\ast$, where the time $t_0(s)$ also depends continuously on $s$. Since the definition of being loose or tight just depends on the homotopy class of $\alpha(s)_{t_0(s)}^*$, it is independent of the choice of $\omega_n \in \mathcal{L}\SS^n(\1)$ when $n=2$ or $n=3$. To further simplify notation, we will often write $\gamma^\ast$ instead of $\gamma^\ast_{t_0}$ for a curve, and $\alpha^\ast$ for the family of curves $\alpha(s)_{t_0(s)}^*$ where $s$ varies in a compact set $K$.  

We are interested in finding tight maps in order to find some extra homotopy in $\mathcal{L}\SS^3(z)$ with respect to the space of generic curves. This will be explained in more detail in Subsection~\ref{s83}.
 
We have the following proposition.

\begin{proposition}\label{loosehom}
Consider two continuous maps $\alpha, \beta : K \rightarrow \mathcal{L}\SS^n(z)$, and assume that they are homotopic. Then $\alpha$ is loose if and only if $\beta$ is loose.  
\end{proposition}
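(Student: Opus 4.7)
The plan is to show that looseness is preserved under homotopy by transporting the witness data for $\alpha$ through the given homotopy to obtain witness data for $\beta$, exploiting the naturality of the ``add-a-spiral'' operation with respect to homotopies in $\mathcal{L}\SS^n(z)$.

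Suppose $\alpha$ is loose, witnessed by a continuous $t_0 : K \to [0,1]$ and a homotopy $A : K \times [0,1] \to \mathcal{L}\SS^n(z)$ with $A(s,0) = \alpha(s)$ and $A(s,1) = \alpha(s)^\ast_{t_0(s)}$; let $H : K \times [0,1] \to \mathcal{L}\SS^n(z)$ be a homotopy from $\alpha$ to $\beta$. The claim is that $\beta$ is loose with the \emph{same} time function $t_0$. The required homotopy $B : K \times [0,1] \to \mathcal{L}\SS^n(z)$ from $\beta$ to $\beta^\ast_{t_0}$ is built as a concatenation of three pieces: first the reverse of $H$, giving $\beta \simeq \alpha$; next $A$, giving $\alpha \simeq \alpha^\ast_{t_0}$; finally a homotopy $H^\ast$ obtained by applying the add-a-spiral operation pointwise to $H$, namely $H^\ast(s,r) := H(s,r)^\ast_{t_0(s)}$, which connects $\alpha^\ast_{t_0}$ to $\beta^\ast_{t_0}$. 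Reversing the roles of $\alpha$ and $\beta$ then yields the converse implication.

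The substantive step is the continuity of $H^\ast$, which reduces to checking that the assignment $(\gamma,s) \mapsto \gamma^\ast_{t_0(s)}$ defines a continuous map $\mathcal{L}\SS^n(z) \times K \to \mathcal{L}\SS^n(z)$. This is where the main obstacle lies: Definition~\ref{adding} depends on a parameter $\varepsilon > 0$ that must be ``sufficiently small'', and the formulas are stated piecewise according to whether $t_0(s) = 0$, $t_0(s) \in (0,1)$, or $t_0(s) = 1$. Compactness of $K$ together with compactness of the image $H(K \times [0,1])$ permits a single uniform choice of $\varepsilon$, addressing the first issue. For the second, one uses the remark immediately following Definition~\ref{adding}: different values of $\varepsilon$ and different choices of $\omega_n \in \mathcal{L}\SS^n(\1)$ yield the same curve up to reparametrization and up to homotopy (path-connectedness of $\mathcal{L}\SS^n(\1)$ for $n = 2, 3$ is used here). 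After possibly absorbing such a reparametrization/homotopy, $(\gamma,s) \mapsto \gamma^\ast_{t_0(s)}$ is genuinely continuous, so $H^\ast$ is an honest homotopy in $\mathcal{L}\SS^n(z)$ and the concatenation above produces the desired $B$.
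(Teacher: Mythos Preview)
Your argument is correct and follows essentially the same route as the paper: define $H^\ast(s,t) = (H(s,t))^\ast_{t_0(s)}$ to get a homotopy $\alpha^\ast \simeq \beta^\ast$, then concatenate $\beta \simeq \alpha \simeq \alpha^\ast \simeq \beta^\ast$. The only difference is that you are more explicit about the continuity of $H^\ast$ (the dependence on $\varepsilon$ and the piecewise nature of Definition~\ref{adding}), which the paper handles with the blanket remark that only the homotopy class of $\gamma^\ast_{t_0}$ matters.
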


\begin{proof}
Since $\alpha$ and $\beta$ are homotopic, there exists a continuous map $ H : K \times [0,1] \rightarrow \mathcal{L}\SS^n(z) $ such that for all $s \in K$:
$ H(s,0)=\alpha(s), \quad H(s,1)=\beta(s). $ Let us define  $ H^\ast : K \times [0,1] \rightarrow \mathcal{L}\SS^n(z) $ by setting, for all $(s,t) \in K \times [0,1]$: $ H^*(s,t)=(H(s,t))^*. $ This is clearly a homotopy between $\alpha^\ast$ and $\beta^\ast$. Assume that $\alpha$ is loose; we have a homotopy between $\alpha$ and $\alpha^\ast$, but since we also have a homotopy between $\beta$ and $\alpha$ and a homotopy between $\alpha^\ast$ and $\beta^\ast$, we obtain a homotopy between $\beta$ and $\beta^\ast$, hence $\beta$ is loose. Assuming that $\beta$ is loose, the exact same argument shows that $\alpha$ is loose.   
\end{proof}

Note that $\alpha^\ast$ is always loose. 

\medskip

A curve $\gamma \in \mathcal{L}\SS^n(z)$ can be identified with the image of a continuous map $\alpha : K \rightarrow \mathcal{L}\SS^n(z)$, where $K$ is a set with one element. In this way, a curve $\gamma \in \mathcal{L}\SS^n(z)$ can be either loose or tight. The following proposition is well-known (from the works of Shapiro~\cite{Sha93} and Anisov~\cite{Ani98}).

\begin{proposition}\label{tightconvex}
A curve $\gamma \in \mathcal{L}\SS^n(z)$ is tight if and only if it is convex.
\end{proposition}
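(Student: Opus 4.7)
The strategy is to prove both directions separately, using the key fact that convex curves, when they exist, form an isolated connected component of $\mathcal{L}\SS^n(z)$.

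For \emph{convex implies tight}, let $\gamma \in \mathcal{L}\SS^n(z)$ be convex. The inserted curve $\omega_n$ is, in both cases $n=2$ and $n=3$, a locally convex curve from $\mathbf{1}$ to $\mathbf{1}$ with nontrivial winding, and in particular is itself not convex. I would argue that $\gamma^\ast = \gamma \ast_{t_0} \omega_n$ is also non-convex: a convex curve in $\SS^n$ meets each hyperplane in at most $n$ points counted with multiplicity (Definition~\ref{defc}), but the transported copy of $\omega_n$ embedded inside $\gamma^\ast$ forces additional intersections with hyperplanes transverse to the image of $\omega_n$. Then, invoking the theorems of Shapiro and Anisov, the subset of convex curves is a full connected component of $\mathcal{L}\SS^n(z)$, so $\gamma$ and $\gamma^\ast$ lie in different connected components. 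No homotopy in $\mathcal{L}\SS^n(z)$ can join them, so $\gamma$ is tight.

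The converse direction, \emph{non-convex implies loose}, is the main work. Given a non-convex $\gamma$, the aim is to construct a homotopy in $\mathcal{L}\SS^n(z)$ from $\gamma$ to $\gamma^\ast$. My plan is first to use the non-convexity of $\gamma$ to produce, by a deformation inside $\mathcal{L}\SS^n(z)$, a curve containing a pair of small loops near some chosen time $t_1$; this is essentially the content of the Shapiro--Anisov classification of non-convex components of $\mathcal{L}\SS^2(z)$, and for $\mathcal{L}\SS^3(z)$ it follows by combining Theorem~\ref{th1} with that classification applied to the left component $\gamma_l$. Second, I would slide this pair of small loops along the curve to the position $t_0$, using the classical sliding argument reproduced in~\cite{Sal13}. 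Third, I would homotope the pair of small loops into the inserted copy of $\omega_n$ using the path-connectedness of $\mathcal{L}\SS^n(\mathbf{1})$ for $n=2, 3$. Concatenating these three homotopies yields the desired homotopy from $\gamma$ to $\gamma^\ast$, showing $\gamma$ is loose.

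The hard part will be the first step of the converse, namely rigorously extracting a pair of small loops from a general non-convex curve while remaining inside $\mathcal{L}\SS^n(z)$. For $n=2$ this is classical (Little, Anisov); for $n=3$ it requires combining the decomposition of Theorem~\ref{th1} with careful simultaneous control of the left curve $\gamma_l \in \mathcal{L}\SS^2$ and the right curve $\gamma_r \in \mathcal{G}\SS^2$, ensuring that the inequality $\kappa_{\gamma_l} > |\kappa_{\gamma_r}|$ is preserved throughout the deformation. Once this geometric step is settled, the remaining sliding and insertion steps are comparatively mechanical, reducing to results already cited from~\cite{Sal13} and to the path-connectedness of $\mathcal{L}\SS^n(\mathbf{1})$.
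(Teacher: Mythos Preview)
The paper does not supply its own proof of this proposition; it simply attributes the result to Shapiro~\cite{Sha93} and Anisov~\cite{Ani98}. So there is no in-paper argument to match, and your outline for $n=2$ is essentially the classical one those references contain.

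Your forward direction is fine: $\gamma^\ast$ is manifestly non-convex, and since the convex curves form a full connected component of $\mathcal{L}\SS^n(z)$ (Shapiro, Anisov), $\gamma$ and $\gamma^\ast$ cannot be joined.

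Your converse for $n=3$, however, has a genuine gap. You argue: $\gamma$ non-convex $\Rightarrow$ $\gamma_l$ non-convex (contrapositive of Proposition~\ref{leftconvex}) $\Rightarrow$ $\gamma_l$ loose (the $n=2$ case), and then you want to lift this to $\gamma$ loose. But Proposition~\ref{leftight} only gives the implication in the \emph{other} direction, and in fact the reverse implication is false: the paper itself exhibits (right after Proposition~\ref{leftconvex}) a convex, hence tight, curve $\gamma_1^2 \in \mathcal{L}\SS^3$ whose left part $\gamma_l = \sigma_c^2$ is non-convex, hence loose. So the $n=2$ homotopy on $\gamma_l$ cannot in general be promoted to a homotopy of the pair $(\gamma_l,\gamma_r)$ satisfying condition~(L); your proposed extension step would have to fail exactly for such curves. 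Since your plan feeds only the non-convexity of $\gamma_l$ (not of $\gamma$ itself) into the construction of the homotopy, it cannot distinguish these cases.

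The arguments in \cite{Sha93} and \cite{Ani98} avoid this by working intrinsically in $\SS^n$: non-convexity of $\gamma$ is witnessed by a flattening (inflection) configuration of $\gamma$ itself, near which one can nucleate a closed piece in $\mathcal{L}\SS^n(\mathbf{1})$ by a purely local deformation. That local step, using the non-convexity of $\gamma$ directly rather than of $\gamma_l$, is what your sketch is missing.
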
 

Now let us look at the case where $n=3$. Given a continuous map $\alpha : K \rightarrow \mathcal{L}\SS^3(z_l,z_r)$, one can define its left part, $\alpha_l : K \rightarrow \mathcal{L}\SS^2(z_l)$ simply by setting $\alpha_l(s)=(\alpha(s))_l$, for $s \in K$. The following proposition gives us the relation between the tightness of $\alpha$ and the tightness of its left part $\alpha_l$. 

\begin{proposition}\label{leftight}
If $\alpha : K \rightarrow \mathcal{L}\SS^3(z_l,z_r)$ is loose, then $\alpha_l : K \rightarrow \mathcal{L}\SS^2(z_l)$ is loose. As a consequence, if $\alpha_l : K \rightarrow \mathcal{L}\SS^2(z_l)$ is tight, then $\alpha : K \rightarrow \mathcal{L}\SS^3(z_l,z_r)$ is tight.   
\end{proposition}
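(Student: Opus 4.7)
The plan is to push a looseness homotopy in $\mathcal{L}\SS^3(z_l,z_r)$ down to one in $\mathcal{L}\SS^2(z_l)$ via the left-projection coming from Theorem~\ref{th1}, and then correct for the fact that the left decomposition of $\omega_3=\gamma_1^4$ is not literally the $\omega_2$ used to define the operation $(\cdot)^\ast$ in $\SS^2$, but only homotopic to it inside $\mathcal{L}\SS^2(\1)$.

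First I would invoke Theorem~\ref{th1}: the homeomorphism between $\mathcal{L}\SS^3(z_l,z_r)$ and the subspace of pairs $(\gamma_l,\gamma_r)$ satisfying condition~\eqref{cond} yields a continuous projection $\pi_l:\mathcal{L}\SS^3(z_l,z_r)\to\mathcal{L}\SS^2(z_l)$, $\gamma\mapsto\gamma_l$. Assume $\alpha$ is loose, witnessed by a continuous $t_0:K\to[0,1]$ and a homotopy $A:K\times[0,1]\to\mathcal{L}\SS^3(z_l,z_r)$ with $A(s,0)=\alpha(s)$ and $A(s,1)=\alpha(s)\ast_{t_0(s)}\omega_3$. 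Set $A_l:=\pi_l\circ A$; then $A_l(s,0)=\alpha_l(s)$, and it remains to identify $A_l(s,1)$ up to a further homotopy in $\mathcal{L}\SS^2(z_l)$.

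The central identity is $(\gamma\ast_{t_0}\omega_3)_l=\gamma_l\ast_{t_0}\omega_{3,l}$, with $\omega_{3,l}=\sigma_\pi^4\in\mathcal{L}\SS^2(\1)$. To prove it I would inspect the piecewise definition (Definition~\ref{adding}): outside the insertion window the curve is a reparametrization of $\gamma$, and since condition~\eqref{cond} is invariant under rescaling $\|\gamma_l'\|$ and $\|\gamma_r'\|$ by a common factor, Theorem~\ref{th1} forces its left part to be the corresponding reparametrization of $\gamma_l$. Inside the window the curve is $\mathcal{F}_\gamma(t_0)\,\omega_3(\cdot)$, whose lifted Frenet frame is $\tilde{\mathcal{F}}_\gamma(t_0)\cdot\tilde{\mathcal{F}}_{\omega_3}$. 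Under the identification $\mathrm{Spin}_4=\SS^3\times\SS^3$ left multiplication acts componentwise, so projecting onto the left factor gives $\tilde{\mathcal{F}}_{\gamma_l}(t_0)\cdot\tilde{\mathcal{F}}_{\omega_{3,l}}$, which is precisely the lifted Frenet frame of $\gamma_l\ast_{t_0}\omega_{3,l}$ on that window. Matching the two lifted Frenet frames pointwise identifies the two curves in $\SS^2$.

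Finally, both $\omega_{3,l}=\sigma_\pi^4$ and the chosen $\omega_2=\sigma_c^2$ lie in the path-connected space $\mathcal{L}\SS^2(\1)$; a path between them inside $\mathcal{L}\SS^2(\1)$ induces, via the insertion construction, a continuous family of homotopies from $\alpha_l(s)\ast_{t_0(s)}\omega_{3,l}$ to $\alpha_l(s)\ast_{t_0(s)}\omega_2=\alpha_l(s)^\ast_{t_0(s)}$ inside $\mathcal{L}\SS^2(z_l)$. Concatenating this with $A_l$ gives a homotopy in $\mathcal{L}\SS^2(z_l)$ between $\alpha_l$ and $\alpha_l^\ast$, so $\alpha_l$ is loose. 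The second assertion is the contrapositive of the first. The main technical obstacle is the key identity $(\gamma\ast_{t_0}\omega_3)_l=\gamma_l\ast_{t_0}\omega_{3,l}$: once one recognises the left decomposition at the level of lifted Frenet frames as the componentwise projection $\mathrm{Spin}_4=\SS^3\times\SS^3\to\SS^3$, the content is routine, but the bookkeeping must be carried out across all the pieces of Definition~\ref{adding}, including the boundary cases $t_0\in\{0,1\}$.
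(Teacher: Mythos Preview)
Your proposal is correct and follows essentially the same route as the paper: project the looseness homotopy $A$ via the continuous left-part map $\gamma\mapsto\gamma_l$ coming from Theorem~\ref{th1}, and identify $A_l(\cdot,1)$ with $\alpha_l^\ast$. The paper's proof is considerably terser---it simply asserts that ``it is easy to observe that $A_l(s,1)=\alpha_l(s)^\ast$''---whereas you spell out the key identity $(\gamma\ast_{t_0}\omega_3)_l=\gamma_l\ast_{t_0}\omega_{3,l}$ via the componentwise structure of $\mathrm{Spin}_4=\SS^3\times\SS^3$ and then correct for $\omega_{3,l}=\sigma_\pi^4\ne\omega_2$ using the path-connectedness of $\mathcal{L}\SS^2(\1)$; this extra care is justified and matches the paper's earlier remark that the notion of looseness is independent of the choice of $\omega_n$.
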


\begin{proof}
We assume that $\alpha$ is loose. Then there exists a continuous map $ A : K \times [0,1] \rightarrow \mathcal{L}\SS^3(z_l,z_r) $ such that for all $ s \in K$:  $ A(s,0)=\alpha(s), \quad A(s,1)=\alpha(s)^\ast. $
Let us define the map 
$ A_l : K \times [0,1] \rightarrow \mathcal{L}\SS^2(z_l) $ simply by setting $A_l(s,t)=(A(s,t))_l$. Since the map giving the left part of a curve is a continuous map, $A_l$ is continuous. But now it is easy to observe that $ A_l(s,1)=\alpha_l(s)^\ast $ which proves that $\alpha_l$ is loose.
\end{proof}

\medskip

Using Propositions~\ref{tightconvex} and~\ref{leftight}, one immediately obtains the following:

\begin{proposition}\label{leftconvex}
Take $\gamma \in \mathcal{L}\SS^3(z_l,z_r)$. If $\gamma_l $ is convex, then $\gamma$ is convex.
\end{proposition}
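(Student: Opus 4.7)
The plan is to deduce this proposition directly by combining Propositions~\ref{tightconvex} and~\ref{leftight}, viewing the single curve $\gamma$ as a continuous map from a one-point set $K = \{\mathrm{pt}\}$ into $\mathcal{L}\SS^3(z_l,z_r)$, in which case its ``left part'' is exactly the curve $\gamma_l$ regarded as a map from $K$ into $\mathcal{L}\SS^2(z_l)$ (note that Theorem~\ref{th1} guarantees $\gamma_l \in \mathcal{L}\SS^2(z_l)$, so this makes sense).

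First I would assume that $\gamma_l$ is convex. By Proposition~\ref{tightconvex} (applied in the case $n=2$), the constant map sending the point of $K$ to $\gamma_l \in \mathcal{L}\SS^2(z_l)$ is tight. Next I would apply the contrapositive form of Proposition~\ref{leftight} stated in the second sentence: since $\alpha_l = \gamma_l$ is tight as a map $K \to \mathcal{L}\SS^2(z_l)$, the map $\alpha = \gamma : K \to \mathcal{L}\SS^3(z_l,z_r)$ must itself be tight. Finally, invoking Proposition~\ref{tightconvex} once more, this time in the case $n=3$, I conclude that $\gamma$ is convex.

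I do not anticipate any real obstacle: the whole content is encoded in the two preceding propositions, and the only thing to check is that a single curve qualifies as the data of a continuous map from a singleton, which is immediate. The only conceptual point worth mentioning in the write-up is that the decomposition $\gamma \mapsto (\gamma_l,\gamma_r)$ furnished by Theorem~\ref{th1} is the one being used to define $\alpha_l$, so that the notion of ``left part'' appearing in Proposition~\ref{leftight} coincides with the $\gamma_l$ of the statement; this is built into the construction of the decomposition, so no additional verification is required.
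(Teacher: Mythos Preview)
Your proposal is correct and follows exactly the approach indicated in the paper: the result is obtained by combining Propositions~\ref{tightconvex} and~\ref{leftight}, viewing a single curve as a map from a one-point compact set. The paper presents this as an immediate consequence of those two propositions, and your write-up simply spells out that deduction.
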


The converse is not true in general. The curve $\gamma_1^2$ defined in Example~\ref{family1.2} is convex (\cite{Alv16}), but its left part, which is of the form $\sigma_c^2$ for some $0<c<2\pi$, is clearly not convex.

\subsection{Generalizations for the case $n=3$}\label{s83}

In order to understand the difference between the homotopy types of $\mathcal{L}\SS^3(z)$ and $\mathcal{G}\SS^3(z)$, as in \cite{Sal13}, one would like to find maps, say defined on $K=\SS^p$ for some $p \geq 1$, which are homotopic to a constant in $\mathcal{G}\SS^3(z)$ but not homotopic to a constant in $\mathcal{L}\SS^3(z)$. Indeed, if one finds such a map, this would give a non-zero element in $\pi_p(\mathcal{L}\SS^3(z))$ which is mapped to zero in $\pi_p(\mathcal{G}\SS^3(z))$. Notice that such a map is tight.

In~\cite{Sal13}, it is proven that a map $\alpha : K \rightarrow \mathcal{L}\SS^2(z)$ is always homotopic to $\alpha^\ast$ inside the space $\mathcal{G}\SS^2(z)$ (Lemma $6.1$). A similar result holds in the case $n=3$, but in order to state and prove it we need to take a small detour. Using the result in the case $n=2$, we will prove below that a map $\alpha : K \rightarrow \mathcal{L}\SS^3(z_l,z_r)$ is always homotopic, in $\mathcal{G}\SS^3(z_l,z_r)$,
to the map $\alpha$ to which we attached (curve by curve)
a pair of loops with zero geodesic torsion, that is an element in $\mathcal{G}\SS^3(\1,\1)$ with zero geodesic torsion. One could then change the definition of $\alpha^*$ so that instead of attaching an element in $\mathcal{L}\SS^3(\1,\1)$, one attaches an element in $\mathcal{G}\SS^3(\1,\1)$ with zero geodesic torsion. The obvious problem is that if $\alpha$ takes values in $\mathcal{L}\SS^3(z_l,z_r)$, this would no longer be the case of $\alpha^*$. 

To resolve this issue, recall that to an element $g \in \mathcal{G}\SS^3(\1,\1)$ with zero geodesic torsion is associated a pair of curves $(g_l,g_r) \in \mathcal{L}\SS^2(\1) \times \mathcal{G}\SS^2(\1)$     such that $\kappa_{g_l}=-\kappa_{g_r}>0$ (which follows from Theorem~\ref{th0}). Given a curve $\gamma \in \mathcal{G}\SS^3(z_l,z_r)$, let us decompose it into its left and right parts $\gamma=(\gamma_l,\gamma_r)$, and let $\gamma \ast g$ be the curve $\gamma$ to which we attached the curve $g$ at some point. Then it is easy to see that $\gamma \ast g=(\gamma_l \ast g_l,\gamma_r \ast g_r)$, that is the left (respectively right) part of $\gamma \ast g$ is obtained by attaching the left (respectively right) part of $g$ to the left (respectively right) part of $\gamma$. As we already explained, if $\gamma$ is locally convex, then $\gamma \ast g$ is not locally convex because it does not satisfy the condition on the geodesic curvature. A first attempt would be to slightly modify $g_l$ (or $g_r$) into $\tilde{g}_l$ so that the geodesic curvature condition is met; but then the condition on the norm of the speed would not be satisfied, that is $||(\gamma_l \ast \tilde{g}_l)'(t)|| \neq ||(\gamma_r \ast g_r)'(t)||$. Hence in order to satisfy both conditions at the same time, we will have to modify the whole curve in a rather subtle way. 

At the end we should obtain a curve, that we shall call $\gamma^\#$ (to distinguish from the curve $\gamma^\ast$ which we previously defined);
$\gamma^\#$ has the property that if $\gamma$ is locally convex, then so is $\gamma^\#$. Then of course one has to know how this procedure is related to the procedure of adding loops we defined. The curve $\gamma^\#$ is of course different from the curve $\gamma^\ast$, but we will see later that $\gamma$ is loose (meaning that $\gamma$ is homotopic to $\gamma^\ast$) if and only if $\gamma$ is homotopic to $\gamma^\#$; hence defining loose and tight with respect to $\gamma^\ast$ or $\gamma^\#$ is just a matter of convenience.  

We will use the Lemma below to construct the curve $\gamma^\#$.

\begin{lemma}\label{finallemma}
Consider a convex arc $\gamma: [t_{0}-2\varepsilon,t_{0}+2\varepsilon] \rightarrow \SS^2$ and positive numbers $K_0$, $K_1$, with $K_1 > \kappa_{\gamma}(t) > K_0$,
for all $t \in [t_0-2\varepsilon,t_0+2\varepsilon]$. Then given $t_{---} \in [t_0-2\varepsilon, t_{0})$ and $t_{+++} \in (t_0,t_0+2\varepsilon]$ there exist a unique arc $\nu:[t_0-2\varepsilon,t_0+2\varepsilon] \rightarrow \SS^2$ (up to reparametrization) and times $t_{--}, \; t_{++}$ with $t_{--} \in (t_{---},t_{0})$ and $t_{++} \in (t_{0},t_{+++})$ such that

\begin{equation}\label{cond0}
\nu(t)=\gamma(t), \quad t \notin [t_{---},t_{+++}], 
\end{equation}

\begin{equation}\label{cond1}
\kappa_{\nu}(t)= K_0, \quad t \in [t_{---},t_{--}] \cup  [t_{++},t_{+++}], 
\end{equation} 

\begin{equation}\label{cond2}
\kappa_{\nu}(t)= K_1, \quad t \in [t_{--},t_{++}] \quad \mathrm{and} 
\end{equation} 

\begin{equation}\label{cond3}
\int_{t_{---}}^{t_{+++}}||\gamma'(t)||dt < \int_{t_{---}}^{t_{+++}}||\nu'(t)||dt
\end{equation}

Futhermore, $t_{---}$ and $t_{+++}$ can be chosen so that there exist $t_{-}$, $t_{+}$, with $t_{-} \in (t_{--},t_{0})$ and $t_{+} \in (t_{0},t_{++})$
and

\begin{equation}\label{cond4}
\int_{t_{---}}^{t_{0}}||\gamma'(t)||dt = \int_{t_{---}}^{t_{-}}||\nu'(t)||dt,
\end{equation}

\begin{equation}\label{cond5}
\int_{t_{0}}^{t_{+++}}||\gamma'(t)||dt = \int_{t_{+}}^{t_{+++}}||\nu'(t)||dt.
\end{equation}

\end{lemma}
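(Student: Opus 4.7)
My plan is to construct $\nu$ by replacing $\gamma|_{[t_{---},t_{+++}]}$ with a concatenation of three circular arcs in $\SS^2$ of constant geodesic curvatures $K_0$, $K_1$, $K_0$, with arc-lengths $L_1$, $L_2$, $L_3$ and junction times $t_{--}, t_{++}$ to be determined; outside this interval set $\nu=\gamma$. Conditions \eqref{cond0}--\eqref{cond2} are then built into the construction, and what remains is to (i) show existence and uniqueness of positive $(L_1,L_2,L_3)$ making the concatenation match $\gamma$ in a $C^1$ fashion at $t_{---}$ and $t_{+++}$; (ii) verify the strict length inequality \eqref{cond3}; (iii) arrange the splitting conditions \eqref{cond4}, \eqref{cond5} with $t_-, t_+$ strictly inside the middle arc.

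\textbf{Step 1 (Frame matching).} In arc-length parametrization, a $K$-arc of arc-length $s$ advances the Frenet frame by $\exp(s\Lambda_K)$, where $\Lambda_K \in \mathfrak{so}_3$ is the Jacobi matrix of \eqref{logderives2} with unit speed and curvature $K$. The $C^1$-matching requirement at the right endpoint reduces to
\[
\exp(L_1\Lambda_{K_0})\,\exp(L_2\Lambda_{K_1})\,\exp(L_3\Lambda_{K_0}) \;=\; \mathcal{F}_\gamma(t_{---})^{-1}\,\mathcal{F}_\gamma(t_{+++}) \;\in\; \mathrm{SO}_3.
\]
Since $K_0 \neq K_1$, the generators $\Lambda_{K_0}, \Lambda_{K_1}$ together with their bracket $[\Lambda_{K_0},\Lambda_{K_1}]$ span $\mathfrak{so}_3$; a standard Euler-angle decomposition combined with the inverse function theorem then yields a unique positive triple $(L_1,L_2,L_3)$ when the right-hand side is close to $I$, and for wider separations $t_{+++}-t_{---}$ the solution extends by continuation as long as the $L_i$ remain positive. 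This produces $\nu$ and the junction times $t_{--}, t_{++}$, with uniqueness selected by the positivity constraint and the near-identity branch.

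\textbf{Step 2 (Length inequality).} Both $\gamma$ and $\nu$ realize the same frame change $A \in \mathrm{SO}_3$ on $[t_{---},t_{+++}]$, but the curvature of $\gamma$ lies strictly in $(K_0,K_1)$ while $\nu$ uses only the extremes. Geometrically, the middle $K_1$-arc of $\nu$ turns more sharply than $\gamma$, bulging away from $\gamma$'s path, and the outer $K_0$-arcs must then flatten out to re-meet $\gamma$'s endpoints with matching tangents; this detour produces strictly greater arc length, giving $L_1+L_2+L_3 > \int_{t_{---}}^{t_{+++}}\|\gamma'(t)\|\,dt$. A rigorous version interpolates between the curvature profile of $\gamma$ and that of $\nu$ (preserving the frame-match $A$ via the implicit function theorem applied to Step~1) and tracks the variation of arc length, with a second-order Baker--Campbell--Hausdorff expansion in $\mathfrak{so}_3$ yielding the required strict monotonicity.

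\textbf{Step 3 and main obstacle.} Conditions \eqref{cond4}, \eqref{cond5} are equivalent to the strict inequalities $L_1 < \int_{t_{---}}^{t_0}\|\gamma'\|\,dt$ and $L_3 < \int_{t_0}^{t_{+++}}\|\gamma'\|\,dt$, which force the arc-length-defined points $t_\pm$ to lie inside the middle ($K_1$-)piece of $\nu$. Choosing $t_{---}, t_{+++}$ symmetrically about $t_0$ makes the construction symmetric so that both $L_1$ and $L_3$ scale linearly in $t_{+++}-t_{---}$ with a strictly smaller leading coefficient than the right-hand sides (the strict gap coming from the length excess of Step~2); hence symmetric choices sufficiently close to $t_0$ satisfy the required inequalities. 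The hardest step is Step~2: turning the detour intuition into a rigorous strict quantitative inequality on arc lengths is the main technical obstacle, and will require either a careful convexity argument on the length functional over admissible curvature profiles realizing $A$, or an explicit second-order $\mathfrak{so}_3$-computation to verify that the length is strictly monotone under curvature-profile interpolation.
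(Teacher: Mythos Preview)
Your approach is genuinely different from the paper's, and considerably harder. The paper gives a purely synthetic construction: at each of the endpoints $\gamma(t_{---})$ and $\gamma(t_{+++})$ it draws the tangent circle of curvature $K_0$ (these are larger than $\gamma$'s osculating circles since $K_0<\kappa_\gamma$, hence lie on the outside of the convex arc), then draws the unique circle of curvature $K_1$ tangent to both of these; $\nu$ is the concatenation of three arcs of these circles. Existence and uniqueness of $(L_1,L_2,L_3)$ are thus immediate from elementary geometry of tangent circles, with no need for the inverse function theorem, Euler-angle decompositions, or a continuation argument. Your Step~1 is not wrong, but the continuation claim (``extends \ldots\ as long as the $L_i$ remain positive'') is not verified, and in fact the direct geometric picture makes it unnecessary.

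The more important difference concerns your Step~2, which you correctly flag as the main obstacle. In the paper's picture $\nu$ is by construction the \emph{outer} curve and $\gamma$ the \emph{inner} curve of a convex configuration, and the length inequality~\eqref{cond3} follows in one line from the classical fact that for convex curves the outside arc is longer than the inside arc. Your proposed route via curvature-profile interpolation and a second-order BCH computation in $\mathfrak{so}_3$ might be made to work, but it is an entirely avoidable difficulty: the convexity argument replaces it wholesale. For Step~3 the paper also argues differently: rather than exploiting symmetry and a scaling limit, it simply defines $t_\pm$ by \eqref{cond4}--\eqref{cond5} and then adjusts $t_{---},t_{+++}$ so that $\nu(t_\pm)$ land on the middle $K_1$-circle. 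In short, your algebraic framework is coherent but leaves the key inequality as an open technical problem; the paper's geometric construction dissolves that problem.
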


\begin{proof}
Construct large tangent circles of curvature $K_0$ at $\gamma(t_{+++})$ and $\gamma(t_{---})$, as in Figure~\ref{fig:lemma}. Notice that they are external to the arc. Construct a (small) circle of curvature $K_1$ tangent to the first two circles. The curve $\nu$ is obtained by following arcs of these three circles as in Figure~\ref{fig:lemma}. Convexity implies that the outside curve $\nu$ is longer than the inside curve $\gamma$. This takes care of conditions~(\ref{cond0}), (\ref{cond1}), (\ref{cond2}) and (\ref{cond3}).
Define $t_{+}$ and $t_{-}$ by equations~(\ref{cond4}) and~(\ref{cond5}). By choosing $t_{+++}$ and $t_{---}$, we can guarantee that $\nu(t_{+})$ and $\nu(t_{-})$ fall on the (smaller) circle of curvature $K_1$. We then choose $t_{++}$, $t_0$ and $t_{--}$ in the appropriate order.
\end{proof}

\begin{figure}[H]
\centering
\includegraphics[scale=0.5]{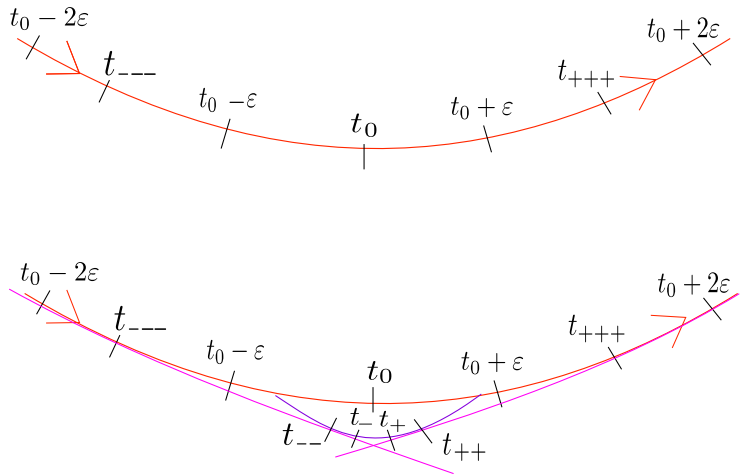}
\caption{How we modify a curve $\gamma \in \mathcal{L}\SS^2(z)$.}
\label{fig:lemma}
\end{figure}

Given a curve $\gamma \in \mathcal{L}\SS^3(z_l,z_r)$, let $\gamma_l \in \mathcal{L}\SS^2(z_l)$ and $\gamma_r \in \mathcal{G}\SS^2(z_r)$. To define the curve $\gamma^\# \in \mathcal{L}\SS^3(z_l,z_r)$, we will define its pair of curves $\gamma_l^\# \in \mathcal{L}\SS^2(z_l)$ and $\gamma_r^\# \in \mathcal{G}\SS^2(z_r)$, using the Lemma \ref{finallemma}. (The reader should follow the construction in Figure~\ref{fig:n}.) Fix $t_0 \in (0,1)$ (the cases $t_0=0$ and $t_0=1$ can be treated in a similar way). The curve we are going to define depends of course on $t_0$, but as before, we will simply write $\gamma_{t_0}^{\#}=\gamma^\#$.   

\begin{figure}[ht]
\centering
\includegraphics[scale=0.5]{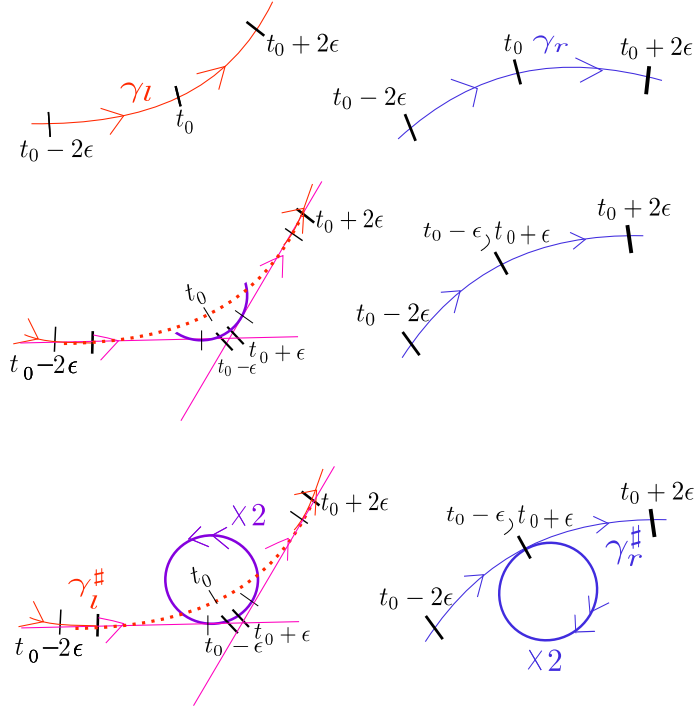}
\caption{Definition of the curve $\gamma^\#=(\gamma_l^\#,\gamma_r^\#) \in \mathcal{L}\SS^3(z_l,z_r)$.}
\label{fig:n}
\end{figure}

The curvatures of $\gamma_l$ and $\gamma_r$ at the point $t_0$ satisfy $\kappa_{\gamma_l}(t_0)>|\kappa_{\gamma_r}(t_0)|$. Since $\kappa_{\gamma_l}(t)$ and $|\kappa_{\gamma_r}(t)|$ can be assumed to be continuous, there exist $\varepsilon>0$ and $K_0>0$, $K_1>0$ such that for all $t_l \in [t_0-2\varepsilon,t_0+2\varepsilon]$ and $t_r \in [t_0-2\varepsilon,t_0+2\varepsilon]$, one has
\begin{equation}\label{curvature}
K_1> \kappa_{\gamma_l}(t_l) > K_0 > |\kappa_{\gamma_r}(t_r)|.
\end{equation} 

Now we are in the situation of the Lemma \ref{finallemma}, which we will use to construct $\gamma^{\#}=(\gamma_l^\#,\gamma_r^\#).$

Outside the interval $[t_{0}-2\varepsilon,t_{0}+2\varepsilon]$, we will not modify the curves $\gamma_l$ and $\gamma_r$, that is, we set
\begin{equation}
\gamma_l^\#(t)=\gamma_l(t), \quad \gamma_r^\#(t)=\gamma_r(t), \quad t \notin [t_0-2\varepsilon,t_0+2\varepsilon]. 
\end{equation} 
Hence for $t \notin [t_0-2\varepsilon,t_0+2\varepsilon]$,  condition (L) is clearly satisfied.

In the set $[t_{0}-2\varepsilon,t_{0}-\varepsilon] \cup [t_{0}+\varepsilon,t_{0}+2\varepsilon]$, $\gamma_r^\#$ will simply correspond to a reparametrization of $\gamma_r$, such that the curve $\gamma_r^\#$ on these intervals has two times the velocity of $\gamma_r$ in the same interval.
For $\gamma_l^\#$, $t \in [t_{0}-2\varepsilon,t_{0}-\varepsilon] \cup [t_{0}+\varepsilon,t_{0}+2\varepsilon] $ we will follow  the curve $\nu$ reparametrized by $\varphi_{-}:[t_0-2\varepsilon,t_0-\varepsilon] \rightarrow [t_0-2\varepsilon,t_{-}]$ and $\varphi_{+}:[t_0+\varepsilon,t_0+2\varepsilon] \rightarrow [t_{+},t_{0}+2\varepsilon]$. 
Therefore, from this and (\ref{cond4}) and (\ref{cond5}) the condition on the length is satisfied. The condition on the geodesic curvature is also satisfied, since in this set
\[ \kappa_{\gamma_l^\#}(t)> K_{0} > |\kappa_{\gamma_r^\#}(t)|. \]

It remains to define the curve on the interval $[t_0-\varepsilon,t_0+\varepsilon]$. Observe here that $\gamma_r^\#(t_0-\varepsilon)=\gamma_r(t_0)=\gamma_r^\#(t_0+\varepsilon)$, while $\gamma_l^\#(t_0-\varepsilon)\neq \gamma_l^\#(t_0+\varepsilon)$. Note that, by construction, $\gamma_l^\#(t_{0}-\varepsilon)=\nu(t_{-})$ and $\gamma_l^\#(t_{0}+\varepsilon)=\nu(t_{+}).$
The curve $\gamma_l^\#$ for $t \in [t_{0}-\varepsilon,t_{0}+\varepsilon]$ follows a circle of length $c_{1}$ with geodesic curvature $K_1$, performing slightly more than 2 turns. Therefore, for all $t \in [t_0-\varepsilon,t_0+\varepsilon],$ one has
\[ \kappa_{\gamma_l^\#}(t)=K_1 \quad \mathrm{and} \] 
\[ \int_{t_{0}-\varepsilon}^{t_{0}+\varepsilon} ||(\gamma_l^\#)'(t)||dt =2c_1+ \int_{t_{-}}^{t_{+}} ||(\nu'(t)||dt. \]

% Associated to $K_0>0$ let
Choose $0<c_0<2\pi$ such that the geodesic curvature of the curve $\sigma_{c_0}$ is equal to $K_0$. 

Let us now choose $c_2=c_1+\frac{1}{2}\int_{t_{-}}^{t_{+}} ||\nu'(t)||dt$, and let $\bar{\sigma}_{c_2}$ be the curve obtained by reflecting the curve $\sigma_{c_2}$ with respect to the hyperplane $\{(x,y,z) \in \R^3 \; | \; z=0\}$ (that is, $\bar{\sigma}_{c_2}$ is the image of $\sigma_{c_2}$ under the map $(x,y,z) \mapsto (x,y,-z)$). Such a curve $\bar{\sigma}_{c_2}$ has constant negative geodesic curvature $-K_2$ (hence it is negative locally convex). Now define $\gamma_r^\#$ on $[t_0-\varepsilon,t_0+\varepsilon]$ by setting 
\begin{equation*}
\gamma_r^\#(t)=(\mathcal{F}_{\gamma_r}(t_0)) \bar{\sigma}_{c_2}^2\left(\frac{t-t_0+\varepsilon}{2\varepsilon}\right), \quad t \in [t_0-\varepsilon,t_0+\varepsilon].
\end{equation*}

So, since $c_2>c_1$ and the absolute value $K_2$ of the geodesic curvature of $\sigma_{c_2}$ satisfies $K_2<K_1$, hence
\[ \kappa_{\gamma_l^\#}(t)=K_1>K_2=|\kappa_{\gamma_r^\#}(t)|. \]
Therefore condition (L) is also satisfied on $[t_0-\varepsilon,t_0+\varepsilon]$. Again, Figure~\ref{fig:n} summarizes the definition of the curve $\gamma^\#=(\gamma_l^\#,\gamma_r^\#)$.

\begin{definition}\label{sustenido}
Given a curve $\gamma \in \mathcal{L}\SS^3(z_l,z_r)$ and a time $t_0 \in [0,1]$, we define $\gamma_{t_0}^\#=\gamma^\# \in \mathcal{L}\SS^3(z_l,z_r)$ by setting $\gamma^\#=(\gamma_l^\#,\gamma_r^\#)$, where $\gamma_l^\# \in \mathcal{L}\SS^2(z_l)$ and $\gamma_r^\# \in \mathcal{G}\SS^2(z_r)$ are defined as in the construction above. Given continuous maps $\alpha : K \rightarrow \mathcal{L}\SS^3(z_l,z_r)$ and $t_0 : K \rightarrow [0,1]$, we define $\alpha_{t_0}^\#=\alpha^\# : K \rightarrow \mathcal{L}\SS^3(z_l,z_r)$ by setting $\alpha_{t_0}^\#(s)=(\alpha(s))_{t_0(s)}^\#$ for all $s \in K$.
\end{definition}

\begin{definition}\label{tight2}
Let $K$ be a compact set. A continuous map $\alpha : K \rightarrow \mathcal{L}\SS^3(z)$ is \emph{$\#$-loose} if there exist a continuous map $ t_0 : K \rightarrow [0,1] $ and a homotopy $ A : K \times [0,1] \rightarrow \mathcal{L}\SS^3(z)$ such that for all $s \in K$: $ A(s,0)=\alpha(s), \quad A(s,1)=\alpha(s)_{t_0(s)}^\#. $
If the map $\alpha : K \rightarrow \mathcal{L}\SS^3(z)$ is not $\#$-loose, then we call it \emph{$\#$-tight}.
\end{definition}

\begin{Rem}\label{tight12}
A continuous map $\alpha : K \rightarrow \mathcal{L}\SS^3(z)$ is $\#$-loose if and only if it is loose. Therefore a continuous map $\alpha : K \rightarrow \mathcal{L}\SS^3(z)$ is $\#$-tight if and only if it is tight.
\end{Rem}
 
One can prove this remark using the techniques of ``spreading loops along a curve''  (see for instance~\cite{Sal13}), which can be seen as an easy instance of the h-principle of Gromov (\cite{Gro86} and \cite{Eli02}). We will not prove this remark since we will not use it; in the sequel it will be more convenient to deal with these concepts since they will enable us to apply more easily results in the case $n=2$.

We can now prove the following result:

\begin{proposition}\label{Genericas}
Let $\alpha: K \rightarrow \mathcal{G}\SS^3(z_l,z_r)$ be a continuous map. Then $\alpha$ is homotopic to $\alpha^\#$ inside the space $\mathcal{G}\SS^3(z_l,z_r)$. 
\end{proposition}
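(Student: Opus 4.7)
The plan is to exploit Theorem~\ref{th0}, which identifies $\mathcal{G}\SS^3(z_l,z_r)$ with pairs $(\gamma_l,\gamma_r) \in \mathcal{G}\SS^2(z_l) \times \mathcal{G}\SS^2(z_r)$ satisfying condition (G). Under this identification $\alpha$ corresponds to $(\alpha_l,\alpha_r)$ and $\alpha^\#$ corresponds to $(\alpha_l^\#,\alpha_r^\#)$, so the problem reduces to producing a homotopy in the product space whose slices always satisfy $\lVert\tilde\gamma_l'(t)\rVert = \lVert\tilde\gamma_r'(t)\rVert$ and $\kappa_{\tilde\gamma_l}(t) > \kappa_{\tilde\gamma_r}(t)$. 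The crucial relaxation compared with Theorem~\ref{th1} is that we only need a \emph{real} strict inequality between curvatures, not the absolute-value condition (L); this open condition in the pair space is what will give us the freedom to deform.

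The first step is to recall the $\SS^2$ ingredient: for any continuous map $\beta : K \to \mathcal{L}\SS^2(z)$, the map $\beta^\ast$ (attaching a closed curve $\omega_2$) is homotopic to $\beta$ inside $\mathcal{G}\SS^2(z)$. This is Lemma~6.1 of~\cite{Sal13}, established by the spreading-loops argument, and the analogous statement is equally valid starting from a generic (rather than locally convex) $\beta : K \to \mathcal{G}\SS^2(z)$. Applied separately, this gives a homotopy $H_l$ from $\alpha_l^\#$ back to $\alpha_l$ in $\mathcal{G}\SS^2(z_l)$ and a homotopy $H_r$ from $\alpha_r^\#$ back to $\alpha_r$ in $\mathcal{G}\SS^2(z_r)$.

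The second step — the technical heart of the argument — is to synchronize $H_l$ and $H_r$ so that at every intermediate time $\lambda \in [0,1]$ and every $s \in K$, the pair $(H_l(s,\lambda), H_r(s,\lambda))$ satisfies condition (G). Recall how $(\alpha_l^\#,\alpha_r^\#)$ was built from $(\alpha_l,\alpha_r)$: on the central interval $[t_0-\varepsilon,t_0+\varepsilon]$ one inserts a circle of curvature $K_1$ on the left and one of curvature $-K_2$ on the right (with $K_1>K_2>0$), and on the buffer intervals $[t_0-2\varepsilon,t_0-\varepsilon]\cup[t_0+\varepsilon,t_0+2\varepsilon]$ one reparametrizes so that the speeds on left and right match. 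I will build the synchronized homotopy by continuously interpolating the four geometric parameters $c_1,c_2,K_1,K_2$ (along with the buffer reparametrization $\varphi_\pm$) simultaneously on the two factors, using at each step the explicit formulas $\lVert\gamma'\rVert = (\kappa_{\gamma_l}-\kappa_{\gamma_r})\lVert\gamma_l'\rVert/2$ and $\tau_\gamma = (\kappa_{\gamma_l}+\kappa_{\gamma_r})/(\kappa_{\gamma_l}-\kappa_{\gamma_r})$ from the proof of Theorem~\ref{th0} to enforce equal speeds. The inequality $\kappa_{\tilde\gamma_l}>\kappa_{\tilde\gamma_r}$ is automatic on the buffer and central intervals from the constructions, and persists under small deformation since it is open.

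The step I expect to be the main obstacle is the matching at the junctions $t_0 \pm \varepsilon$: as the inserted circles shrink, the geodesic curvatures on the two sides of the junction must remain compatible with the equal-speed condition, and a naive linear interpolation of parameters will in general break $\lVert\tilde\gamma_l'\rVert = \lVert\tilde\gamma_r'\rVert$. I plan to circumvent this by not literally shrinking the inserted circles, but instead by first applying the spreading-loops technique of~\cite{Sal13} — which dissolves the attached $\omega$-piece by sliding it along $\alpha_l$ and $\alpha_r$ into arbitrarily small rotations that cancel — and performing the spreading simultaneously on both factors with the same time parameter. Because the only cross-condition is the open inequality on curvatures and the scalar equation on speeds (which can be absorbed into a common reparametrization of $[0,1]$), the simultaneous spreading stays inside the pair space described by Theorem~\ref{th0}, producing the desired homotopy in $\mathcal{G}\SS^3(z_l,z_r)$.
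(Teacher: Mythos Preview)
Your approach differs substantially from the paper's. The paper does not pass through the decomposition at all: it simply invokes the h-principle, noting that $\mathcal{G}\SS^3(z_l,z_r)\simeq\Omega\mathrm{Spin}_4$ via the Frenet frame injection, so that attaching a closed curve in $\mathcal{G}\SS^3(\1,\1)$ is homotopically trivial. For intuition it then sketches a direct $\SS^3$ construction---deforming the unit tangent vector $\mathbf{t}_\gamma$ itself so as to create two loops of opposite orientation---and observes that, under Theorem~\ref{th0}, this single $\SS^3$ move automatically produces the required extra loops on both $\gamma_l$ and $\gamma_r$ simultaneously. No synchronization problem ever arises because one works upstairs and lets the decomposition take care of itself.

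Your route through the pair description has a genuine gap at exactly the point you flag. The claim that the equal-speed condition ``can be absorbed into a common reparametrization of $[0,1]$'' is false as stated: applying the \emph{same} diffeomorphism $\phi$ to both factors preserves the ratio $\|\tilde\gamma_l'\|/\|\tilde\gamma_r'\|$, so unequal speeds stay unequal. Fixing this requires \emph{different} reparametrizations of the two factors, and then the pointwise inequality $\kappa_{\tilde\gamma_l}(t)>\kappa_{\tilde\gamma_r}(t)$ compares curvatures at non-corresponding points of the original $\SS^2$ homotopies, with no reason to hold. More concretely, the spreading-loops homotopy of~\cite{Sal13} dissolves the two left circles (both of curvature $+K_1$) by passing through immersions whose geodesic curvature assumes arbitrary real values; coordinating this with the dissolution of the two right circles (curvature $-K_2$) so that the strict inequality survives at every $(s,\lambda,t)$ is not a consequence of openness---openness buys you stability under small perturbations, not under an entire homotopy. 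Making your strategy rigorous would require building the two $\SS^2$ homotopies jointly with explicit simultaneous control of speed and curvature, which is more delicate than the one-line h-principle argument the paper actually uses.
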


\begin{proof}

We know from the h-principle that the desired homotopy exists. We want, however, to have a picture of the process: this is described in detail in~\cite{Alv16}. In nutshell, given a curve $\gamma \in \mathcal{G}\SS^3(z_l,z_r)$, we define $\gamma_1$ by deforming the unit tangent vector $t_{\gamma}(t)$ as in Figure~\ref{fig:p}. This has the effect of adding two loops (with opposite orientations) to both $\gamma_l$ and $\gamma_r$, as in Figure~\ref{fig:q}.

\begin{figure}[H]
\centering
\includegraphics[scale=0.4]{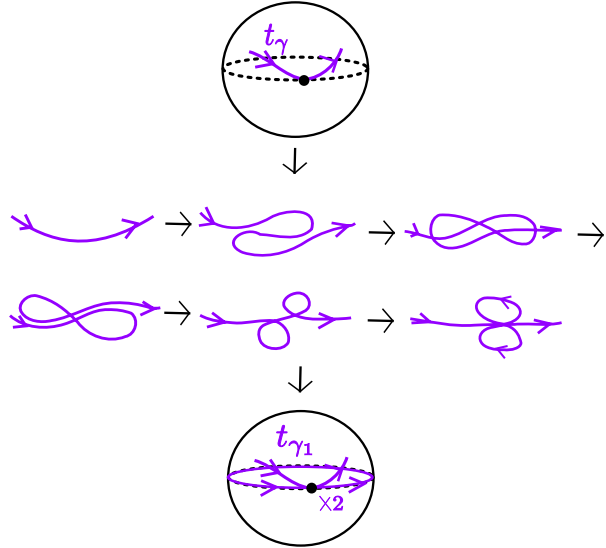}
\caption{A path from $t_{\gamma}$ to $t_{\gamma_1}$.}
\label{fig:p}
\end{figure}

\begin{figure}[H]
\centering
\includegraphics[scale=0.5]{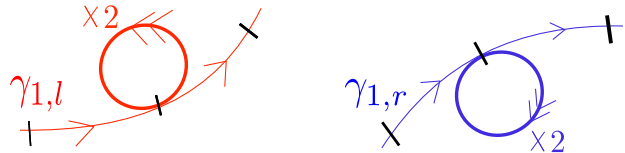}
\caption{The curves $\gamma_{1,l}$ and $\gamma_{1,r}$.}
\label{fig:q}
\end{figure}

\end{proof}

Note that Proposition~\ref{Genericas} is the analogous result for $\SS^3$
of Lemma~$6.1$ from~\cite{Sal13}.
The following remark is analogous to Proposition~$6.4$ from~\cite{Sal13}.

\begin{Rem}\label{Genericas2}
Let $\alpha: K \rightarrow \mathcal{G}\SS^3(z_l,z_r)$ be a continuous map. Then $\alpha$ is homotopic to a constant map in $\mathcal{G}\SS^3(z_l,z_r)$ if and only if $\alpha^\#$ is homotopic to a constant map in $\mathcal{L}\SS^3(z_l,z_r)$. 
\end{Rem} 

One direction follows directly from Proposition~\ref{Genericas}: if $\alpha^\#$ is homotopic to a constant map in $\mathcal{L}\SS^3(z_l,z_r)$, since $\alpha$ is always homotopic to $\alpha^\#$ in $\mathcal{G}\SS^3(z_l,z_r)$, we obtain that $\alpha$ is homotopic to a constant in $\mathcal{G}\SS^3(z_l,z_r)$. The other direction can be proved exactly as in Proposition~$6.4$ from~\cite{Sal13} using again the techniques of spreading loops along a curve.
We will not use this statement and therefore a careful proof is not given.

\medskip 

In~\cite{Sal13}, tight maps
\[ h_{2k-2} : \SS^{2k-2} \rightarrow \mathcal{L}\SS^2((-\1)^k) \]
are constructed for an integer $k \geq 2$; 
these are homotopic to constants maps in $\mathcal{G}\SS^2((-\1)^k)$.
These maps are going to be very important in this work too.
To prove that these maps are not homotopic to a constant
in $\mathcal{L}\SS^2((-\1)^k)$,
the following notion is introduced.

\begin{definition}\label{multiconvex}
A curve $\gamma \in \mathcal{L}\SS^2(z)$ is \emph{multiconvex} of multiplicity $k$ if there exist times $0 = t_0 < t_1 < \cdots <
t_k = 1$ such that $\mathcal{F}_\gamma(t_i)=I$ for $0 \leq i<k$, and the restrictions of $\gamma$ to the intervals $[t_{i-1},t_i]$ are convex arcs for $1 \leq i \leq k$. 
\end{definition}

Let us denote by $\mathcal{M}_k(z)$ the set of multiconvex curves of multiplicity $k$ in $ \mathcal{L}\SS^2(z)$.
Lemma~$7.1$ on~\cite{Sal13} proves that the set $\mathcal{M}_k(z)$
is a closed contractible submanifold of $\mathcal{L}\SS^2(z)$ of codimension $2k-2$ with trivial normal bundle. Therefore we can associate to $\mathcal{M}_k(z)$ a cohomology class $m_{2k-2} \in H^{2k-2}(\mathcal{L}\SS^2(z),\R)$ by counting intersection with multiplicity. Given any continuous map $\alpha : K \rightarrow \mathcal{L}\SS^2(z)$, by a perturbation we can make it smooth and transverse to $\mathcal{M}_k(z)$, and we denote by $m_{2k-2}(\alpha) \in \R$ the intersection number of $\alpha$ with $\mathcal{M}_k(z)$.

Therefore, $h_{2k-2}$ defines extra generators in $\pi_{2k-2}(\mathcal{L}\SS^2((-\1)^k))$ (as compared to $\pi_{2k-2}(\mathcal{G}\SS^2((-\1)^k)$) and $m_{2k-2}$ defines extra generators in $H^{2k-2}(\mathcal{L}\SS^2((-\1)^k),\R)$ (as compared to $H^{2k-2}(\mathcal{G}\SS^2((-\1)^k,\R)$). 

Our objective will be to use this extra topology given by $h_{2k-2}$ and $m_{2k-2}$ to $\mathcal{L}\SS^2((-\1)^k)$ with respect to the space of generic curves, together with our decomposition results Theorem~\ref{th0} and Theorem~\ref{th1}, to draw similar conclusions in the case $n=3$. We will be able to do this only in two cases, namely for $\mathcal{L}\SS^3(\1,-\1)$ and $\mathcal{L}\SS^3(-\1,\1)$. But first some extra work is needed.

\subsection{Relaxation-reflexion of curves in $\mathcal{L}\SS^2(\1)$ and $\mathcal{L}\SS^2(-\1)$}\label{s84}

The goal of this subsection is to address the following problem: given a continuous map $\alpha : K \rightarrow \mathcal{L}\SS^2(z_l)$, how to find a way to construct a continuous map $\hat{\alpha}: K \rightarrow \mathcal{L}\SS^3(z_l,z_r)$ such that $\hat{\alpha}_l=\alpha$. 
If we are able to do this, then we will be in a good position to use what is know in the case $n = 2$ to obtain information on our spaces of locally convex curves.

It will be sufficient to consider first the case of curves, that is given $\gamma \in \mathcal{L}\SS^2(z_l)$, we will construct $\hat{\gamma} \in \mathcal{L}\SS^3(z_l,z_r)$ such that $\hat{\gamma}_l=\gamma$. The first idea is simply to define $\hat{\gamma}_r$ to have a length equals to the length of $\hat{\gamma}_l=\gamma$, and just slightly less geodesic curvature, say the geodesic curvature of $\gamma$ reduced by a small constant $\delta$. Let us denote this curve by $R_\delta \gamma$ for the moment.

A first difficulty is that if $\hat{\gamma}=(\gamma,R_\delta \gamma)$, then the final Frenet frame of $\hat{\gamma}$ will depend on $\delta$ and also possibly on the curve $\gamma$ itself. But this is not a serious problem: instead of looking at the final Frenet frame we can look at its Bruhat cell which will be independent of $\delta$ small enough and of $\gamma$, hence after a projective transformation we may assume that the curve has a fixed final Frenet frame. 

Let us denote by $R(z_l)$ a representative of the final Frenet frame of $R_\delta \gamma$; the final Frenet frame of $\hat{\gamma}$ would then be $(z_l,R(z_l))$. Here, $z_r$ is a function of $z_l$, and many pairs $(z_l,z_r)$ are probably not reached by this procedure. But this can (and in fact will) work for the two spaces $\mathcal{L}\SS^3(-\1,\1) \simeq \mathcal{L}\SS^3(\1,-\k)$ and $\mathcal{L}\SS^3(\1,-\1) \simeq \mathcal{L}\SS^3(-\1,\k)$.  

Yet this is not sufficient. We also want this relaxation process to be compatible with the operation $\#$ defined in Subsection~\ref{s81}. More precisely, one would like to know that if $\gamma$ is such that $\gamma_r=R_\delta \gamma_l$, then $\gamma^\#$ still has this property, namely we want $\gamma^\#_r=R_\delta \gamma^\#_l$. To obtain this symmetry, we will have to relax the geodesic curvature in a symmetric way by introducing another small parameter $\varepsilon>0$, and to reflect the curve obtained: this is what we will call the relaxation-reflection of a curve $\gamma$, and it will be denoted by $RR_{\varepsilon,\delta} \gamma$. We will show that for $\gamma \in \mathcal{L}\SS^2(\1)$, this will produce a curve $\hat{\gamma}=(\gamma,RR_{\varepsilon,\delta} \gamma) \in \mathcal{L}\SS^3(\1,RR(\1)) \simeq \mathcal{L}\SS^3(\1,-\k)$ and for $\gamma \in \mathcal{L}\SS^2(-\1)$, $\hat{\gamma}=(\gamma,RR_{\varepsilon,\delta} \gamma) \in \mathcal{L}\SS^3(-\1,RR(-\1)) \simeq \mathcal{L}\SS^3(-\1,\k)$.

Let us now give proper definition.

\begin{definition}\label{relaxcurve}
Given $\gamma \in \mathcal{L}\SS^2(\pm\1)$, $\varepsilon>0$ and $\delta>0$ sufficiently small, let us define $RR_{\varepsilon,\delta} \gamma$ to be the unique curve in $\mathcal{G}\SS^2$ such that
\[ ||(RR_{\varepsilon,\delta} \gamma)'(t)||=||\gamma'(t)||, \quad\kappa_{RR_{\varepsilon,\delta} \gamma }(t)=
\begin{cases}
-\kappa_\gamma(t)+\delta, \quad t \in (0,\varepsilon) \cup (1-\varepsilon,1), \\ 
-\kappa_\gamma(t)+\delta^2\varepsilon^2, \quad t \in (\varepsilon,1-\varepsilon).
\end{cases}
\]
\end{definition}

This definition should be understood as follows. On the small union of intervals $(0,\varepsilon) \cup (1-\varepsilon,1)$, which is a symmetric interval around the initial point since our curve is closed, we relax the curvature by a constant $\delta$. On the large interval $(\varepsilon,1-\varepsilon)$, the curvature is relaxed by the much smaller constant $\delta^2\varepsilon^2$, so that the product of the relaxation of the curvature with the length of $(\varepsilon,1-\varepsilon)$, which is $\delta^2\varepsilon^2(1-2\varepsilon) \sim \delta^2\varepsilon^2$ is much smaller than the product of the relaxation of the curvature with the length of $(0,\varepsilon) \cup (1-\varepsilon,1)$, which is $2\delta\varepsilon \sim \delta\varepsilon$.   

It follows from Subsection~\ref{s35}, that this curve $RR_{\varepsilon,\delta} \gamma$ is well-defined. For $\gamma \in \mathcal{L}\SS^2(\pm \1)$, the final Frenet frame of $RR_{\varepsilon,\delta} \gamma$, which for the moment may depend upon $\varepsilon,\delta$ and $\gamma$, will be denoted by $RR_{\varepsilon,\delta,\gamma} (\pm \1)$.

Given $\gamma \in \mathcal{L}\SS^2(\pm\1)$ and $\varepsilon,\delta>0$ sufficiently small, let us define 
\[ \hat{\gamma}_{\varepsilon,\delta}=(\gamma,RR_{\varepsilon,\delta} \gamma) \in \mathcal{L}\SS^2(\pm\1) \times \mathcal{G}\SS^2(RR_{\varepsilon,\delta, \gamma} (\pm \1)).  \]

\begin{lemma}\label{Lemma2}
Consider $\gamma \in \mathcal{L}\SS^2(\pm \1)$. Then, for sufficiently small $\varepsilon, \delta > 0$, the pair $(\pm \1, RR_{\varepsilon, \delta, \gamma}(\pm \1))$ belongs to the open Bruhat cell $(\pm \1, \mp \k)$.
\end{lemma}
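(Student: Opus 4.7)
My plan is a perturbative computation of the lifted Frenet frame of $\eta := RR_{\varepsilon, \delta}\gamma$ viewed as a perturbation of the reflected curve $\gamma^-$, followed by a Bruhat cell verification via signs of the upper-right minors of the $\Pi_4$-projection.

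First I would set up the reflection identity. The log derivative $\tilde\Lambda_\gamma = \tfrac{v\kappa_\gamma}{2}\i + \tfrac{v}{2}\k$ transforms under $\mathrm{Ad}(\k)$ (which sends $\i \mapsto -\i$, $\k \mapsto \k$) to $\tilde\Lambda_{\gamma^-} = -\tfrac{v\kappa_\gamma}{2}\i + \tfrac{v}{2}\k$, corresponding to the reflection $\gamma^- = D\gamma$ with $D = \mathrm{diag}(1,1,-1)$. Hence $\tilde{\mathcal{F}}_{\gamma^-}(t) = \k\,\tilde{\mathcal{F}}_\gamma(t)\,\k^{-1}$, so $\tilde{\mathcal{F}}_{\gamma^-}(1) = \pm\1$ coincides with the final frame of $\gamma$. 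Thus $RR_{\varepsilon, \delta, \gamma}(\pm\1) \to \pm\1$ as $\varepsilon, \delta \to 0^+$, and the pair $(\pm\1, \pm\1)$ projects to $I \in \mathrm{SO}_4$, which sits on the boundary of every open Bruhat cell; the perturbation is what selects one.

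Next, via variation of parameters write $\tilde{\mathcal{F}}_\eta(t) = W(t)\tilde{\mathcal{F}}_{\gamma^-}(t)$, $W(0) = \1$, obtaining $W'(t) = W(t) P(t)$ for $P(t) = \mathrm{Ad}(\tilde{\mathcal{F}}_{\gamma^-}(t))\bigl(\tfrac{v(t)\Delta(t)}{2}\i\bigr)$, where $\Delta = \delta$ on $(0,\varepsilon) \cup (1-\varepsilon, 1)$ and $\Delta = \delta^2\varepsilon^2$ on $(\varepsilon, 1-\varepsilon)$. Since $\mathrm{Ad}(\tilde{\mathcal{F}}_{\gamma^-}(t))(\i) = \Pi_3(\tilde{\mathcal{F}}_{\gamma^-}(t))(e_1) = \gamma^-(t) = \gamma_1(t)\i + \gamma_2(t)\j - \gamma_3(t)\k$, the first-order integral splits as: the two boundary regions yield an $\i$-contribution of order $\varepsilon\delta$ (because $\tilde{\mathcal{F}}_{\gamma^-}$ is near $\pm\1$ there, so $\mathrm{Ad}$ fixes $\i$), whereas the middle contributes $\tfrac{\delta^2\varepsilon^2}{2}\int_0^1 v(\gamma_1\i + \gamma_2\j - \gamma_3\k)\,dt$, giving $\j, \k$ components of order $\varepsilon^2\delta^2$. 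Writing $W(1) \approx \1 + a\i + b\j + c\k$, the two scalings are tuned so that $a \sim \varepsilon\delta$, $b, c \sim \varepsilon^2\delta^2$ are all non-zero and $c$ depends only on the middle integral.

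Finally I would verify the Bruhat cell by computing the upper-right $k \times k$ minors of $\Pi_4(\pm\1, \pm W(1)) = \Pi_4(\1, W(1))$. A direct calculation yields, at leading order, $M_1 = c$, $M_2 = b^2 + c^2$, $M_3 = c(1 + a^2 + b^2 + c^2)$, $M_4 = 1$. The minors of $\Pi_4(\pm\1, \mp\k)$ are $(-, +, -, +)$, so placement in that cell reduces to $c < 0$, i.e.\ $\int_0^1 v(t)\gamma_3(t)\,dt > 0$. This sign is supplied by the initial normal $\mathbf{n}_\gamma(0) = e_3$ together with $\kappa_\gamma > 0$ (forcing $\gamma$ to start bending into the upper hemisphere), extended globally by connectedness of $\mathcal{L}\SS^2(\pm\1)$ and continuity from the reference curve $\sigma_c^2$, for which the integral is manifestly positive. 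The spin lift is then pinned down by continuity of the family $s \mapsto (\pm\1, RR_{s\varepsilon, s\delta, \gamma}(\pm\1))$. The main obstacle I anticipate is precisely this uniform sign control on $\int v\gamma_3\,dt$: the $\delta^2\varepsilon^2$ weighting of the middle interval was chosen exactly to reduce the Bruhat cell check to this single sign condition, but establishing it robustly across each connected component (perhaps via a chopping argument invoking Proposition~\ref{bruhathomeo} to reduce to the reference $\sigma_c^2$) will be the technical heart of the proof.
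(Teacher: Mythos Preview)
Your perturbative setup via variation of parameters is sound and is actually more systematic than the paper's proof, which computes explicitly for a single reference curve and then appeals to a locality argument (``the relaxation is essentially concentrated in a small $\varepsilon$-neighborhood of the endpoint, so the Bruhat cell depends only on the local data there''). However, your asymptotic bookkeeping contains an error that leads you to the wrong sign condition.

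You assert that the boundary intervals contribute only to the $\i$-component of $W(1)-\1$ and that the $\k$-component $c$ comes solely from the middle, yielding $c \approx -\tfrac{\delta^2\varepsilon^2}{2}\int_0^1 v\gamma_3\,dt$. This is not correct. Near $t=0$ we have $\gamma_3(t) = \tfrac12 v(0)^2\kappa_\gamma(0)\,t^2 + O(t^3)$ (and symmetrically near $t=1$), so the boundary contributes to $c$ a term of order $-\varepsilon^3\delta\bigl(v(0)^3\kappa_\gamma(0)+v(1)^3\kappa_\gamma(1)\bigr)$, which is \emph{always negative} since $\kappa_\gamma>0$. Your middle term is of order $\varepsilon^2\delta^2$ with sign governed by $\int v\gamma_3$. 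Which of the two dominates depends on the ratio $\varepsilon/\delta$, and your reduction implicitly assumes $\varepsilon \ll \delta$.

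In that regime the condition $\int_0^1 v\gamma_3\,dt > 0$ is genuinely needed, and it does \emph{not} hold for every $\gamma \in \mathcal{L}\SS^2(\pm\1)$: a locally convex closed curve can spend most of its arclength in the southern hemisphere (start at $e_1$, make a short arc, transit toward $-e_3$, wind many times there, and return), driving the integral negative. Your connectedness argument then fails because the integral can cross zero. The paper's locality argument is really the opposite regime $\delta \ll \varepsilon$: there the boundary term dominates $c$, its sign is forced by $\kappa_\gamma(0),\kappa_\gamma(1)>0$, and the answer depends only on the jet of $\gamma$ at the endpoints (hence is the same for every curve with $\mathcal{F}_\gamma(0)=\mathcal{F}_\gamma(1)=I$). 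Reworking your computation in that regime would close the gap and in fact give a cleaner proof than the paper's reference-curve-plus-locality argument.
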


\begin{proof}
Let us prove that $(\1,RR_{\varepsilon,\delta,\gamma} (\1))$ is Bruhat-equivalent to $(\1,-\k)$; the proof that $(-\1,RR_{\varepsilon,\delta,\gamma} (-\1))$ is Bruhat-equivalent to $(-\1,\k)$ will be analogous.

We will first prove this for a specific curve $\gamma=\gamma_l \in \mathcal{L}\SS^2(\1)$; at the end we will explain how this implies the result for an arbitrary curve in $\mathcal{L}\SS^2(\1)$. Let us choose
\[ \gamma_l(t)=\Pi_3(\tilde{\Gamma}_l(t))(e_1), \quad t \in [0,1] \]
where 
\[ \tilde{\Gamma}_l(t)=\exp\left(2\pi h_l t\right) \in \SS^3, \quad t \in [0,1] \]
with $h_l=\cos(\theta_l)\i+\sin(\theta_l)\k$ and $\theta_l=\pi/4$, that is
\[ \tilde{\Gamma}_l(t)=\exp\left(2\pi \left(\frac{\i+\k}{\sqrt{2}}\right) t\right). \]

Then for $t \in [0,1]$ close to one, and given $\varepsilon,\delta>0$ small, we set
\[ \gamma_{r,\delta,\varepsilon}(t)=\Pi_3(\tilde{\Gamma}_{r,\delta,\varepsilon}(t))(e_1) \]
where $\tilde{\Gamma}_{r,\delta,\varepsilon}$ is defined by
\[ \tilde{\Gamma}_{r,\delta,\varepsilon}(t)=\exp\left((2\pi-\varepsilon) h_{r,\delta} t\right) \]
with $\varepsilon>0$ small and
\[ h_{r,\delta}=-\cos \theta_{r,\delta} \i+\sin\theta_{r,\delta} \k \]
with $\theta_{r,\delta}=\pi/4+\delta$, with $\delta$ small. Observe that this curve $\gamma_{r,\delta,\varepsilon}$, is not exactly the curve $RR_{\varepsilon,\delta}\gamma_l$ that we defined; yet clearly the two are homotopic hence it is enough to prove the result by considering $\gamma_{r,\delta,\varepsilon}$ instead of $RR_{\varepsilon,\delta}\gamma_l$. To simplify notations, we will suppress the dependence on $\varepsilon$ and $\delta$ and write $\gamma_{r}$ instead of $\gamma_{r,\delta,\varepsilon}$ (and similarly for $\tilde{\Gamma}_{r,\delta,\varepsilon}$, $h_{r,\delta}$ and $\theta_{r,\delta}$).  

Hence we can write again
\[ h_r=\cos\delta\left(\frac{-\i+\k}{\sqrt{2}}\right)+\sin\delta\left(\frac{\i+\k}{\sqrt{2}}\right) \]
and the final lifted Frenet frame of $\gamma_r$ is
\[ \tilde{\Gamma}_r(1)=\exp\left((2\pi-\varepsilon) h_r\right)=\exp\left(-\varepsilon h_r\right). \]
Let us first compute to which Bruhat cell the image of $(\tilde{\Gamma}_l(1),\tilde{\Gamma}_r(1))=(\1,\tilde{\Gamma}_r(1))$ under the universal cover projection $\Pi_4 : \SS^3 \times \SS^3  \simeq \mathrm{Spin}_4 \rightarrow \mathrm{SO}_4$ belongs. Using the explicit expression of the map $\Pi_4$ (see Subsection~\ref{s21}), we can compute $\Pi_4(\1,\tilde{\Gamma}_r(1))$ and we find that it is equal to the matrix

\begin{equation*}
\Pi_4(\1,\tilde{\Gamma}_r(1))=
\begin{pmatrix}
P_1 & P_2 & P_3 & P_4
\end{pmatrix},
\end{equation*}
where the columns $P_i$, for $1 \leq i \leq 4$, are given by

\begin{equation*}
P_1=
\begin{pmatrix} 
 \cos(\varepsilon)\\ 
  (-\cos\delta+\sin\delta)\frac{\sin\varepsilon}{\sqrt{2}} \\
 0\\
(\cos\delta+\sin\delta)\frac{\sin\varepsilon}{\sqrt{2}}
\end{pmatrix},
\quad P_2=
\begin{pmatrix} 
(\cos\delta-\sin\delta)\frac{\sin\varepsilon}{\sqrt{2}} \\ 
 \cos(\varepsilon)\\
  (-\cos\delta-\sin\delta)\frac{\sin\varepsilon}{\sqrt{2}}\\
  0
\end{pmatrix},
\end{equation*}

\begin{equation*}
P_3=
\begin{pmatrix} 
0 \\ 
  (\cos\delta+\sin\delta)\frac{\sin\varepsilon}{\sqrt{2}} \\
 \cos(\varepsilon)\\
(\cos\delta-\sin\delta)\frac{\sin\varepsilon}{\sqrt{2}}
\end{pmatrix},
\quad P_4=
\begin{pmatrix} 
(-\cos\delta-\sin\delta)\frac{\sin\varepsilon}{\sqrt{2}} \\ 
0 \\
 (-\cos\delta+\sin\delta)\frac{\sin\varepsilon}{\sqrt{2}} \\
  \cos(\varepsilon) 
\end{pmatrix}.
\end{equation*}

Since $\varepsilon>0$ and $\delta>0$ are small, in particular $0<\varepsilon<\pi$ and $0<\delta<\pi/4$, one can check that this matrix is Bruhat-equivalent to the matrix 
\[ \begin{pmatrix}
0 & 0 & 0 & -1 \\
0 & 0 & 1 & 0 \\
0 & -1 & 0 & 0 \\
1 & 0 & 0 & 0 
\end{pmatrix} \in \mathrm{SO}_4. \]
Therefore $(\1,RR_{\varepsilon,\delta,\gamma_l}(\1))$ is Bruhat-equivalent to $(\1,-\k)$ for the specific curve $\gamma_l$ we choose.

To conclude, observe that for the curve $\gamma_l$ we choose, the final lifted Frenet frame of $(\gamma_l,RR_{\varepsilon,\delta}\gamma_l)$ belongs to an open cell. Using this observation, and the fact that for any curve $\gamma \in \mathcal{L}\SS^2(\1)$, the curve $RR_{\varepsilon,\delta}\gamma$ is obtained from $\gamma$ by relaxing its geodesic curvature essentially in a small $\varepsilon$-neighborhood of $\gamma(0)=\gamma(1)$ (outside this neighborhood the geodesic curvature is only slightly altered), we deduce that for any curve $\gamma \in \mathcal{L}\SS^2(\1)$, the final lifted Frenet frame of $(\gamma,RR_{\varepsilon,\delta}\gamma)$ belongs   to the same open cell as the final lifted Frenet frame of $(\gamma_l,RR_{\varepsilon,\delta}\gamma_l)$. This shows that $(\1,RR_{\varepsilon,\delta,\gamma}(\1))$ is Bruhat-equivalent to $(\1,-\k)$ for any curve $\gamma \in \mathcal{L}\SS^2(\1)$.

\end{proof}

We will use Bruhat cells to remove the dependence on $\varepsilon$, $\delta$ and $\gamma$ from the final lifted Frenet frame $(\pm\1,RR_{\varepsilon,\delta,\gamma} (\pm \1))$.

Recall from Proposition~\ref{bruhathomeo} that there exist natural homeomorphisms

\[ T_{\varepsilon,\delta}^+ : \mathcal{L}\SS^3(\1,RR_{\varepsilon,\delta,\gamma} (\1)) \rightarrow \mathcal{L}\SS^3(\1,-\k), \]
\[ T_{\varepsilon,\delta}^- : \mathcal{L}\SS^3(-\1,RR_{\varepsilon,\delta,\gamma} (-\1)) \rightarrow \mathcal{L}\SS^3(-\1,\k).  \]

Let us now make the following definition.

\begin{definition}\label{relaxcurve2}
For $\gamma \in \mathcal{L}\SS^2(\pm \1)$ and $\varepsilon,\delta>0$ sufficiently small, we define
\[ \hat{\gamma}=T^{\pm}_{\varepsilon,\delta}(\hat{\gamma}_{\varepsilon,\delta}) \in \mathcal{L}\SS^3(\pm \1,\mp \k). \]
\end{definition}

Similarly, for a continuous map $\alpha : K \rightarrow \mathcal{L}\SS^2(\1)$ (respectively a continuous map $\alpha : K \rightarrow \mathcal{L}\SS^2(-\1)$), we define a continuous map  $\hat{\alpha} : K \rightarrow \mathcal{L}\SS^3(\1,-\k)$ (respectively a continuous map  $\hat{\alpha} : K \rightarrow \mathcal{L}\SS^3(-\1,\k)$) by setting $\hat{\alpha}(s)=\widehat{\alpha(s)}$. 

\begin{proposition}\label{propp}
Let $\alpha : K \rightarrow \mathcal{L}\SS^3(\1,-\k)$ (respectively $\alpha : K \rightarrow \mathcal{L}\SS^3(-\1,\k)$) a continuous map. Assume that $\alpha=\hat{\beta}$ for some continuous map $\beta : K \rightarrow \mathcal{L}\SS^2(\1)$ (respectively $\beta : K \rightarrow \mathcal{L}\SS^3(-\1)$). Then $\alpha^\#$ is homotopic in $\mathcal{L}\SS^3(\1,-\k)$ (respectively in $\mathcal{L}\SS^3(-\1,\k)$) to $\widehat{\beta^\ast}$.   
\end{proposition}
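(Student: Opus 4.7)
The plan is to use Theorem~\ref{th1} to identify both $\alpha^\#$ and $\widehat{\beta^*}$ with pairs of curves in $\mathcal{L}\SS^2 \times \mathcal{G}\SS^2$ satisfying condition (L), and then exhibit an explicit homotopy between these pairs. First I would unpack the definitions. If $\alpha(s)=\widehat{\beta(s)}$, then up to the Bruhat homeomorphism $T^{\pm}_{\varepsilon,\delta}$ the curve $\alpha(s)$ corresponds to the pair $(\beta(s),RR_{\varepsilon,\delta}\beta(s))$. Applying $\#$, the construction in Subsection~\ref{s83} gives $\alpha(s)^\# = (\beta(s)^\#_l,\gamma_r^\#(s))$, where $\beta(s)^\#_l$ is $\beta(s)$ with a nearly doubly-wrapped circle of curvature $K_1$ inserted near $t_0(s)$, and $\gamma_r^\#(s)$ is $RR_{\varepsilon,\delta}\beta(s)$ modified near $t_0(s)$ by the insertion of the reflected circle $\bar\sigma_{c_2}^2$ of negative curvature $-K_2$, with the insertion sizes matched so that (L) holds. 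On the other hand, $\widehat{\beta^*}(s)$ corresponds to the pair $(\beta(s)^*,RR_{\varepsilon,\delta}(\beta(s)^*))$; since $\beta(s)^*=\beta(s)\ast_{t_0(s)} \omega_2$ differs from $\beta(s)$ only on a small interval around $t_0(s)$ where it equals $\omega_2=\sigma_c^2$, the second component $RR_{\varepsilon,\delta}(\beta(s)^*)$ differs from $RR_{\varepsilon,\delta}\beta(s)$ only on the same interval, where it is essentially the reflected circle $\bar\sigma_c^2$ of negative curvature.

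Next I would handle the left parts. Both $\beta^\#_l$ and $\beta^*$ have the form ``$\beta$ with a closed convex loop attached at $t_0(s)$''; the loops in question (a nearly-two-wrap circle of curvature $K_1$ in one case, and the chosen $\omega_2=\sigma_c^2$ in the other) are both elements of $\mathcal{L}\SS^2(\1)$, which is path-connected. Therefore there is a continuous path between them in $\mathcal{L}\SS^2(\1)$, and this path, inserted in a neighborhood of $t_0(s)$ exactly as in Definition~\ref{adding}, provides a homotopy from $(\alpha^\#)_l$ to $(\widehat{\beta^*})_l$ inside $\mathcal{L}\SS^2(\1)$. A parallel observation handles the right parts: the inserted arcs $\bar\sigma_{c_2}^2$ and (the appropriate segment of) $RR_{\varepsilon,\delta}\omega_2$ are both negatively-curved closed arcs in $\mathcal{G}\SS^2(\1)$, and can be interpolated through a continuous family of such arcs in $\mathcal{G}\SS^2(\1)$ with matched endpoints.

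The technical core, and the main obstacle, is to combine these two one-parameter interpolations into a single homotopy $H:K\times[0,1]\to \mathcal{L}\SS^3(\pm\1,\mp\k)$ while preserving condition (L), namely $||H(s,u)_l'(t)||=||H(s,u)_r'(t)||$ and $\kappa_{H(s,u)_l}(t)>|\kappa_{H(s,u)_r}(t)|$ at every $(s,u,t)$. Away from the insertion window around $t_0(s)$ both pairs already coincide (up to the small error introduced by the $\delta^2\varepsilon^2$ term in Definition~\ref{relaxcurve}), so the homotopy may be taken to be the identity there. Inside the window, I would parametrize the interpolation by a single family of scalar functions $(\lambda_l(s,u,\cdot),\lambda_r(s,u,\cdot))$ for speed and $(\kappa_l,\kappa_r)$ for curvature, arranged so that at each moment $\lambda_l=\lambda_r$ and $\kappa_l>|\kappa_r|$; by first choosing $\varepsilon$ and $\delta$ small enough that the gap $K_1-K_2$ dominates all perturbations introduced by $RR$, one gets enough slack to deform the two pairs of inserted arcs in lockstep. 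Finally, since both endpoints of the homotopy have the same final lifted Frenet frame $(\pm\1,\mp\k)$ (after the Bruhat homeomorphism is applied), the resulting map lands in $\mathcal{L}\SS^3(\pm\1,\mp\k)$, proving the proposition.
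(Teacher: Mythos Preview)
Your proposal is correct and follows the same approach as the paper: both rely on the observation that the $\#$ construction and the $\widehat{\;\cdot\;}$ construction were deliberately designed to be compatible (this is precisely the motivation stated in the paragraph preceding Definition~\ref{relaxcurve}). The paper's own proof is a single sentence --- ``This follows easily from our definitions of $\alpha^\ast$ (in the case $n=2$), $\alpha^\#$ (in the case $n=3$) and $\hat{\alpha}$'' --- whereas you have carefully unpacked those definitions and sketched the explicit homotopy; you are doing more work than the paper, but along exactly the same line.
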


\begin{proof}
This follows easily from our definitions of $\alpha^\ast$ (in the case $n=2$), $\alpha^\#$ (in the case $n=3$) and $\hat{\alpha}$.
\end{proof}

\subsection{Proof of Theorem~\ref{th5}}\label{s85}

Now we define $\hat{\mathcal{M}}_k(z_l,z_r)$ to be the set of curves $\gamma=(\gamma_l,\gamma_r) \in \mathcal{L}\SS^3(z_l,z_r)$ such that $\gamma_l \in \mathcal{M}_k(z_l)$. Exactly as in Lemma~$7.1$ of~\cite{Sal13}, we have the following result.

\begin{proposition}\label{manifold2}
The set $\hat{\mathcal{M}}_k(z_l,z_r) \subset \mathcal{L}\SS^3(z_l,z_r)$
is a closed submanifold of  codimension $2k-2$
with trivial normal bundle.
\end{proposition}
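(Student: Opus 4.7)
The idea is to view $\hat{\mathcal{M}}_k(z_l,z_r)$ as the preimage of $\mathcal{M}_k(z_l)$ under the natural projection to the left factor, and transfer the submanifold structure provided by Lemma~7.1 of~\cite{Sal13}. By Theorem~\ref{th1}, the map $\gamma \mapsto (\gamma_l,\gamma_r)$ identifies $\mathcal{L}\SS^3(z_l,z_r)$ with the open subset of $\mathcal{L}\SS^2(z_l) \times \mathcal{G}\SS^2(z_r)$ carved out by condition (L). Under this identification, $\hat{\mathcal{M}}_k(z_l,z_r)$ is exactly the set of pairs for which $\gamma_l \in \mathcal{M}_k(z_l)$, that is, $\hat{\mathcal{M}}_k(z_l,z_r)=\pi_l^{-1}(\mathcal{M}_k(z_l))$ where $\pi_l:\mathcal{L}\SS^3(z_l,z_r)\to\mathcal{L}\SS^2(z_l)$ sends $\gamma$ to $\gamma_l$.

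First I would verify that $\pi_l$ is a submersion. Given any $(\gamma_l,\gamma_r)\in\mathcal{L}\SS^3(z_l,z_r)$ and any tangent vector $\delta\gamma_l \in T_{\gamma_l}\mathcal{L}\SS^2(z_l)$, one has to produce a compatible $\delta\gamma_r \in T_{\gamma_r}\mathcal{G}\SS^2(z_r)$ so that $(\delta\gamma_l,\delta\gamma_r)$ is tangent to the locus defined by condition (L), namely the equality $\|\gamma_l'\|=\|\gamma_r'\|$ (with the boundary/Frenet conditions preserved), together with the strict inequality $\kappa_{\gamma_l}>|\kappa_{\gamma_r}|$. The second condition is open and hence automatic for small perturbations. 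The first condition can be realized by adjusting $\gamma_r$: the speed of $\gamma_r$ can be freely modified via a reparametrization-type variation in $\mathcal{G}\SS^2(z_r)$ (which leaves the endpoint Frenet frames invariant), since the geodesic curvature of a generic curve in $\mathbb{S}^2$ is unconstrained. This gives a splitting $T_{\gamma}\mathcal{L}\SS^3(z_l,z_r) \to T_{\gamma_l}\mathcal{L}\SS^2(z_l)$ which is surjective.

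Once $\pi_l$ is a submersion, the preimage theorem combined with Lemma~7.1 of~\cite{Sal13} implies that $\pi_l^{-1}(\mathcal{M}_k(z_l))$ is a closed submanifold of codimension $2k-2$ in $\mathcal{L}\SS^3(z_l,z_r)$. Moreover, the normal bundle of $\hat{\mathcal{M}}_k(z_l,z_r)$ is the pullback along $\pi_l$ of the normal bundle of $\mathcal{M}_k(z_l)$, and Lemma~7.1 provides an explicit trivialization of the latter by $2(k-1)$ canonical sections (two at each of the $k-1$ interior nodes where $\mathcal{F}_{\gamma_l}(t_i)=I$, after quotienting out the reparametrization freedom of $t_i$). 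The pullback trivialization then gives the desired framing of $N\hat{\mathcal{M}}_k(z_l,z_r)$. Closedness of $\hat{\mathcal{M}}_k(z_l,z_r)$ follows from continuity of $\pi_l$ and closedness of $\mathcal{M}_k(z_l)$.

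The main obstacle is rigorously verifying the submersion property of $\pi_l$, since the compatibility conditions intertwine the speeds of $\gamma_l$ and $\gamma_r$ while the endpoint Frenet frames of $\gamma_r$ are held fixed. The delicate point is showing that any prescribed variation of $\|\gamma_l'\|$ can be matched by a tangent vector in $T_{\gamma_r}\mathcal{G}\SS^2(z_r)$ (which requires a local surjectivity statement for the ``speed-prescription'' map), but this is a standard application of infinite-dimensional implicit function methods once condition (L) is written out explicitly in the coordinates of Section~\ref{chapter3}.
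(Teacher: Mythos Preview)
Your approach and the paper's are built on the same idea: transfer the structure of $\mathcal{M}_k(z_l)$ from Lemma~7.1 of~\cite{Sal13} through the left projection $\pi_l$. The difference is in execution. The paper does not prove that $\pi_l$ is a submersion; instead it simply observes that the explicit tubular neighborhood constructed in~\cite{Sal13} (which produces $2k-2$ concrete normal vector fields along $\mathcal{M}_k$) can be applied verbatim to the $\gamma_l$-component of a curve in $\hat{\mathcal{M}}_k(z_l,z_r)$, yielding a basis of the normal bundle directly. This sidesteps the submersion question altogether, since it only requires checking that those particular $2k-2$ deformations lift to $\mathcal{L}\SS^3(z_l,z_r)$, a much weaker statement than surjectivity of $d\pi_l$.

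Your route is more systematic but introduces an extra obligation, and your sketch of the submersion step has a small gap: a pure reparametrization of $\gamma_r$ preserves total arc length, so it cannot by itself match a variation $\delta\gamma_l$ that changes $\int_0^1\|\gamma_l'\|\,dt$. What you actually need is that, for any prescribed $\delta v=\delta\|\gamma_l'\|$, one can choose a curvature variation $\delta\kappa_r$ so that the pair $(\delta v,\delta\kappa_r)$ lies in $T_{\gamma_r}\mathcal{G}\SS^2(z_r)$, i.e., preserves the final Frenet frame infinitesimally. This is a controllability statement for the linearized Frenet ODE (the map $\delta\kappa_r\mapsto\delta\mathcal{F}_{\gamma_r}(1)$ is onto $\mathfrak{so}_3$), which is true but is not ``reparametrization.'' Once this is said correctly your argument goes through, and the normal bundle is indeed the pullback of the trivial bundle from~\cite{Sal13}, as you claim.
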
 

Notice that, while $\mathcal{M}_k$ is contractible,
it is not clear whether $\hat{\mathcal{M}}_k(z_l,z_r)$ also is;
fortunately, we do not need to settle this question.

\begin{proof}
The fact that $\hat{\mathcal{M}}_k(z_l,z_r)$ is a closed set
follows directly from the closure of $\mathcal{M}_k$.

In the proof of Lemma~$7.1$ in \cite{Sal13},
a tubular neighborhood for $\mathcal{M}_k$
is explicitely constructed.
The exact same construction (using $\gamma_l$ only)
still works, and obtains a basis for the normal bundle,
which is therefore trivial.
\end{proof}

% The final paragraph of the proof of Lemma~$7.1$ in \cite{Sal13},
% concerning contractibility,
% is not so easy to translate;
% we shall no longer consider this question.

As before, we can then associate to $\hat{\mathcal{M}}_k(z_l,z_r)$ a cohomology class $\hat{m}_{2k-2} \in H^{2k-2}(\mathcal{L}\SS^3(z_l,z_r),\R)$ by counting intersection with multiplicity.

In order to prove Theorem~\ref{th5}, we will need the following proposition.

\begin{proposition}\label{homot}
Let $\alpha_0,\alpha_1 : K \rightarrow \mathcal{L}\SS^3(\1,-\k)$ (respectively $\alpha_0,\alpha_1 : K \rightarrow \mathcal{L}\SS^3(-\1,\k)$) be two continuous maps. Assume that $\alpha_0=\hat{\beta_0}$ and $\alpha_1=\hat{\beta_1}$ for some continuous map $\beta_0,\beta_1 : K \rightarrow \mathcal{L}\SS^2(\1)$ (respectively $\beta_0,\beta_1 : K \rightarrow \mathcal{L}\SS^2(-\1)$). Then $\alpha_0$ and $\alpha_1$ are homotopic in $\mathcal{L}\SS^3(\1,-\k)$ (respectively in $\mathcal{L}\SS^3(-\1,\k)$) if and only if $\beta_0$ and $\beta_1$ are homotopic in $\mathcal{L}\SS^2(\1)$ (respectively in $\mathcal{L}\SS^2(-\1)$).    
\end{proposition}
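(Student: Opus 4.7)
The plan is to prove the two implications separately; the forward direction is essentially formal, while the backward direction carries the real content. For the forward direction, I would take a homotopy $B\colon K \times [0,1] \to \mathcal{L}\SS^2(\pm\1)$ from $\beta_0$ to $\beta_1$, use compactness of $K \times [0,1]$ to select parameters $\varepsilon,\delta>0$ uniformly small enough that Definitions~\ref{relaxcurve} and~\ref{relaxcurve2} apply to every $B(s,u)$, and then set $\widehat{B}(s,u):=\widehat{B(s,u)}$. The continuity of the relaxation-reflection and of the Bruhat-equivalence homeomorphism $T^{\pm}_{\varepsilon,\delta}$ make $\widehat{B}\colon K\times[0,1] \to \mathcal{L}\SS^3(\pm\1,\mp\k)$ into a continuous homotopy from $\alpha_0=\widehat{\beta_0}$ to $\alpha_1=\widehat{\beta_1}$.

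For the backward direction, let $A\colon K\times[0,1]\to \mathcal{L}\SS^3(\pm\1,\mp\k)$ be a homotopy from $\alpha_0$ to $\alpha_1$. Applying Theorem~\ref{th1} pointwise decomposes each $A(s,u)$ as a pair $(A(s,u)_l, A(s,u)_r)$ satisfying condition (L), and since the left quaternion of the final Frenet frame $(\pm\1,\mp\k)$ equals $\pm\1$, we have $A(s,u)_l \in \mathcal{L}\SS^2(\pm\1)$. Taking the left part gives a continuous homotopy $B\colon K \times [0,1] \to \mathcal{L}\SS^2(\pm\1)$ from $(\widehat{\beta_0})_l$ to $(\widehat{\beta_1})_l$. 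The proof then reduces to exhibiting, for each $\beta \in \mathcal{L}\SS^2(\pm\1)$, a homotopy from $(\widehat{\beta})_l$ back to $\beta$ in $\mathcal{L}\SS^2(\pm\1)$ that depends continuously on $\beta$; concatenating this bridge at the two ends of $B$ produces the sought homotopy from $\beta_0$ to $\beta_1$.

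The main obstacle is this last step: by construction $\widehat{\beta}_{\varepsilon,\delta}=(\beta,RR_{\varepsilon,\delta}\beta)$ has left part exactly $\beta$, so the discrepancy between $(\widehat{\beta})_l$ and $\beta$ is entirely due to the chopping homeomorphism $T^{\pm}_{\varepsilon,\delta}$ of Proposition~\ref{bruhathomeo}. The resolution is that the chopping procedure from~\cite{SS12} modifies a curve only in small end-neighborhoods whose size is controlled by $\varepsilon,\delta$; hence $(\widehat{\beta})_l$ differs from $\beta$ only on a short collar near the endpoints, where the discrepancy is an explicit short arc determined by the Bruhat datum. This short arc can be continuously contracted inside $\mathcal{L}\SS^2(\pm\1)$ by letting $\varepsilon,\delta \to 0$ and spreading the correction along the rest of the curve, using the standard spreading-loops technique referenced after Remark~\ref{tight12}. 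The contraction is canonical and continuous in $\beta$, supplying the missing bridge.
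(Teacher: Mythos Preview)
Your outline follows the paper's exactly: for one direction apply $\widehat{(\cdot)}$ to a homotopy of $\beta$'s, for the other take left parts of a homotopy of $\alpha$'s. Where the paper simply declares that ``$H_l$ gives a homotopy between $\beta_0$ and $\beta_1$'', you are more careful and note that $H_l$ is a priori only a homotopy from $(\widehat{\beta_0})_l$ to $(\widehat{\beta_1})_l$, so a canonical bridge from $(\widehat{\beta})_l$ back to $\beta$ is needed.

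Your proposed bridge has a flaw in the mechanism. The homeomorphism $T^\pm_{\varepsilon,\delta}$ is not the chopping operation: Proposition~\ref{bruhathomeo} is Lemma~3.1 of~\cite{SS12}, and the homeomorphism there is realized by a projective transformation (the action of an element of $\mathrm{Up}^+_{n+1}$) applied to the \emph{entire} curve, not a modification supported in an end-collar. So the claim that $(\widehat{\beta})_l$ and $\beta$ agree away from the endpoints is unjustified, and the subsequent ``contract the short arc by spreading loops'' step has nothing to act on. A cleaner route to the bridge is to use contractibility of the open Bruhat cell: the Bruhat homeomorphisms $T_w$ vary continuously in $w$ and reduce to the identity when $w$ equals the original endpoint $(\pm\1, RR_{\varepsilon,\delta,\beta}(\pm\1))$, so the one-parameter family $w \mapsto \big(T_w(\widehat{\beta}_{\varepsilon,\delta})\big)_l$ furnishes a canonical path from $\beta$ to $(\widehat{\beta})_l$, continuous in $\beta$. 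One must still check (or arrange, by choosing the path of $w$'s inside $\{\pm\1\}\times\SS^3$) that the left endpoint stays at $\pm\1$ throughout, so that this path lies in $\mathcal{L}\SS^2(\pm\1)$. Once that bridge is in place, your concatenation argument goes through.
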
 

\begin{proof}
It is sufficient to consider the case where $\alpha_0,\alpha_1 : K \rightarrow \mathcal{L}\SS^3(\1,-\k)$ (the case where $\alpha_0,\alpha_1 : K \rightarrow \mathcal{L}\SS^3(-\1,\k)$ is, of course, the same). We know that $\alpha_0=\hat{\beta_0}$ and $\alpha_1=\hat{\beta_1}$ for some continuous map $\beta_0,\beta_1 : K \rightarrow \mathcal{L}\SS^2(\1)$.

On the one hand, if $H$ is a homotopy between $\alpha_0$ and $\alpha_1$, it can be decomposed as $H=(H_l,H_r)$, and it is clear that $H_l$ gives a homotopy between $\beta_0$ and $\beta_1$. On the other hand, if $H$ is a homotopy between $\beta_0$ and $\beta_1$, $\hat{H}$ provides a homotopy between $\alpha_0$ and $\alpha_1$. 
\end{proof}

Theorem~\ref{th5} will now be an easy consequence of the following proposition. Here the functions $h_{2k-2} : \SS^{2k-2} \rightarrow \mathcal{L}\SS^2((-\1)^k)$ are as in~\cite{Sal13} and in Subsection~\ref{s83} above.

\begin{proposition}\label{nico2}
Consider an integer $k \geq 2$. If $k$ is even, the maps
\[ \hat{h}_{2k-2} : \SS^{2k-2} \rightarrow \mathcal{L}\SS^3(\1,-\k) \]
are homotopic to constant maps in $\mathcal{G}\SS^3(\1,-\k)$, but satisfy $\hat{m}_{2k-2}(\hat{h}_{2k-2})=\pm 1$.
If $k$ is odd, the maps
\[ \hat{h}_{2k-2} : \SS^{2k-2} \rightarrow \mathcal{L}\SS^3(-\1,\k) \]
are homotopic to constant maps in $\mathcal{G}\SS^3(\1,-\k)$, but satisfy $\hat{m}_{2k-2}(\hat{h}_{2k-2})=\pm 1$.
\end{proposition}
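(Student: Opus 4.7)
The strategy is to reduce each assertion to the corresponding established property of $h_{2k-2}$ in the $\SS^2$ setting, exploiting the hat construction from Subsection~\ref{s84} and the relation between $\hat{\mathcal{M}}_k$ and $\mathcal{M}_k$.

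For the null-homotopy in $\mathcal{G}\SS^3$, I would begin by recalling that the Frenet frame inclusion $\mathcal{G}\SS^3(z_l,z_r) \hookrightarrow \Omega(\SS^3 \times \SS^3) = \Omega\SS^3 \times \Omega\SS^3$ is a homotopy equivalence (Subsection~\ref{s83}), and that under the decomposition of Theorem~\ref{th0} this equivalence is compatible with the left/right splitting, yielding $\mathcal{G}\SS^3(z_l,z_r) \simeq \mathcal{G}\SS^2(z_l) \times \mathcal{G}\SS^2(z_r)$ up to homotopy. Consequently, a map into $\mathcal{G}\SS^3$ is null-homotopic if and only if both its left and right parts are null-homotopic in the corresponding $\mathcal{G}\SS^2$ spaces. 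The left part of $\hat{h}_{2k-2}$ coincides with $h_{2k-2}$, which is null-homotopic in $\mathcal{G}\SS^2((-\1)^k)$ by~\cite{Sal13}. The right part is, modulo the Bruhat homeomorphism $T^\pm_{\varepsilon,\delta}$, the composition $RR_{\varepsilon,\delta} \circ h_{2k-2}$; since $RR_{\varepsilon,\delta}$ depends only on the speed and signed geodesic curvature of its argument, it extends continuously to a self-map of $\mathcal{G}\SS^2$ and therefore transports any null-homotopy of $h_{2k-2}$ through generic curves into a null-homotopy of the right part. Combining these two facts gives the claimed null-homotopy.

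For the intersection number $\hat{m}_{2k-2}(\hat{h}_{2k-2}) = \pm 1$, I would invoke the very definition of $\hat{\mathcal{M}}_k(z_l,z_r)$: it consists of locally convex curves whose left part lies in $\mathcal{M}_k(z_l)$. Since the left part of $\hat{h}_{2k-2}(s)$ is $h_{2k-2}(s)$ (the homeomorphism $T^\pm_{\varepsilon,\delta}$ being chosen so as to preserve the left factor, which is possible because the Bruhat adjustment affects only the right factor's endpoint), one obtains the equality $\hat{h}_{2k-2}^{-1}(\hat{\mathcal{M}}_k) = h_{2k-2}^{-1}(\mathcal{M}_k)$. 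The proof of Proposition~\ref{manifold2} builds a trivialization of the normal bundle of $\hat{\mathcal{M}}_k$ by pulling back the trivialization of the normal bundle of $\mathcal{M}_k$ via the left-projection, so transversality of $h_{2k-2}$ with $\mathcal{M}_k$ in $\mathcal{L}\SS^2$ (established in~\cite{Sal13}) is inherited by $\hat{h}_{2k-2}$ with $\hat{\mathcal{M}}_k$ in $\mathcal{L}\SS^3$, with matching local orientations at each intersection point. Hence the signed intersection count is preserved and equals $m_{2k-2}(h_{2k-2}) = \pm 1$.

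The main obstacle is the technical bookkeeping for the Bruhat homeomorphism $T^\pm_{\varepsilon,\delta}$ as $\gamma$ ranges over $\SS^{2k-2}$. The endpoint $RR_{\varepsilon,\delta,\gamma}(\pm\1)$ depends on $\gamma$, and although Lemma~\ref{Lemma2} guarantees that all of these endpoints lie in a single open Bruhat cell, one must produce $T^\pm_{\varepsilon,\delta}$ as a continuous family of homeomorphisms from $\mathcal{L}\SS^3(\pm\1, RR_{\varepsilon,\delta,\gamma}(\pm\1))$ to the fixed target $\mathcal{L}\SS^3(\pm\1, \mp\k)$ in such a way that (i) the left factor is preserved, so that the multiconvexity characterization of $\hat{\mathcal{M}}_k$ applies directly; and (ii) the homotopy class of the right factor in the generic space is preserved, so that the null-homotopy argument goes through. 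Both compatibilities follow from the explicit form of the Bruhat-equivalence homeomorphism of Proposition~\ref{bruhathomeo}, but verifying them carefully is where the real work lies.
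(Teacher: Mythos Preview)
Your argument for the intersection number $\hat{m}_{2k-2}(\hat{h}_{2k-2})=\pm 1$ is essentially the paper's: both reduce immediately to $m_{2k-2}(h_{2k-2})$ via the observation that $\hat{\mathcal{M}}_k$ is the preimage of $\mathcal{M}_k$ under left-projection and that the normal bundle trivialization is pulled back from the $\SS^2$ case.

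For the null-homotopy in $\mathcal{G}\SS^3$, however, you take a genuinely different route. The paper does \emph{not} argue via the product splitting of $\mathcal{G}\SS^3$. Instead it uses the $\#$-machinery developed in Subsection~\ref{s83}: from~\cite{Sal13} one knows $h_{2k-2}^\ast$ is null-homotopic in $\mathcal{L}\SS^2$; Proposition~\ref{homot} lifts this to a null-homotopy of $\widehat{h_{2k-2}^\ast}$ in $\mathcal{L}\SS^3$; Proposition~\ref{propp} identifies $\widehat{h_{2k-2}^\ast}$ with $\hat{h}_{2k-2}^\#$ up to homotopy in $\mathcal{L}\SS^3$; and finally Proposition~\ref{Genericas} says $\hat{h}_{2k-2}$ and $\hat{h}_{2k-2}^\#$ are homotopic in $\mathcal{G}\SS^3$. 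Concatenating gives the conclusion. Your approach, by contrast, bypasses the $\#$-construction entirely: you observe that the lifted Frenet frame of $\gamma\in\mathcal{G}\SS^3$ is exactly the pair $(\tilde{\mathcal{F}}_{\gamma_l},\tilde{\mathcal{F}}_{\gamma_r})$, so the Frenet inclusion $\mathcal{G}\SS^3(z_l,z_r)\hookrightarrow\Omega\SS^3\times\Omega\SS^3$ factors through left/right projection to $\mathcal{G}\SS^2(z_l)\times\mathcal{G}\SS^2(z_r)$, whence null-homotopy of each factor suffices. This is shorter and more conceptual, and it makes clear why the result holds without invoking the somewhat delicate $\#$-operation. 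The paper's route, on the other hand, keeps the argument inside $\mathcal{L}\SS^3$ until the very last step and exercises the propositions built up in Subsections~\ref{s83}--\ref{s84}, which is consistent with the paper's narrative of developing that machinery. Both arguments share the technical point you flag about the continuous dependence of $T^\pm_{\varepsilon,\delta}$ on $\gamma$ and its compatibility with the left factor; the paper treats this as implicit in Definition~\ref{relaxcurve2} and the sentence ``by definition of $\hat{h}_{2k-2}$ and $\hat{m}_{2k-2}$''.
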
 

\begin{proof}
Let us consider the case where $k$ is even (the case where $k$ is odd is exactly the same). By definition of $\hat{h}_{2k-2}$ and $\hat{m}_{2k-2}$, it is clear that
\[ \hat{m}_{2k-2}(\hat{h}_{2k-2})=m_{2k-2}(h_{2k-2}) \]
and therefore we have $\hat{m}_{2k-2}(\hat{h}_{2k-2})=\pm 1$.

It remains to prove that $\hat{h}_{2k-2}$ is homotopic to a constant map in $\mathcal{G}\SS^3(\1,-\k)$. From  Lemma~$7.3$ on~\cite{Sal13}, we know that $h_{2k-2}$ is homotopic to a constant map in $\mathcal{G}\SS^2(\1)$. Therefore this implies that $h_{2k-2}^\ast$ is homotopic to a constant map in $\mathcal{L}\SS^2(\1)$ (see Proposition $6.4$ on~\cite{Sal13}). Let us denote by $c : K \rightarrow \mathcal{L}\SS^2(\1)$ this constant map; then         $\hat{c} : K \rightarrow \mathcal{L}\SS^3(\1,-\k)$ is also a constant map. Now, by Proposition~\ref{propp}, $\hat{h}_{2k-2}^\#$ is homotopic in $\mathcal{L}\SS^3(\1,-\k)$ to $\widehat{h_{2k-2}^\ast}$. Since $h_{2k-2}^\ast$ is homotopic to $c$ in $\mathcal{L}\SS^2(\1)$, it follows from Proposition~\ref{homot} that $\widehat{h_{2k-2}^\ast}$ is homotopic to the constant map $\hat{c}$ in $\mathcal{L}\SS^3(\1,-\k)$, and so $\hat{h}_{2k-2}^\#$ is homotopic to the constant map $\hat{c}$ in $\mathcal{L}\SS^3(\1,-\k)$. Using Proposition~\ref{Genericas2} (we will only use the easy direction which follows from Proposition~\ref{Genericas2}), this shows that $\hat{h}_{2k-2}$ is homotopic to a constant map in $\mathcal{G}\SS^3(\1,-\k)$, which is what we wanted to prove.    
\end{proof}

\medskip

Therefore, given a integer $k \geq 2$, $\hat{h}_{2k-2}: \SS^{2k-2} \rightarrow \mathcal{L}\SS^3((-\1)^k,(-\1)^{(k-1)}\k)$ defines extra generators in $\pi_{2k-2}(\mathcal{L}\SS^3((-\1)^k,(-\1)^{(k-1)}\k))$ as compared to \\ $\pi_{2k-2}(\mathcal{G}\SS^3((-\1)^k,(-\1)^{(k-1)}\k))$.

Using Proposition~\ref{nico2}, it will be easy to conclude.

\begin{proof}[Proof of Theorem~\ref{th5}]
First let us recall that the inclusion $ \mathcal{L}\SS^3(z_l,z_r) \subset \mathcal{G}\SS^3(z_l,z_r) $ always induces surjective homomorphisms between homology groups with real coefficients (\cite{SS12}). Also, for any $j \geq 1$, we have injective homomorphisms between cohomology groups with real coefficients 
\[ H^j(\mathcal{G}\SS^3(z_l,z_r),\R) \simeq H^j(\Omega(\SS^3 \times \SS^3), \R) \rightarrow H^j(\mathcal{L}\SS^3(z_l,z_r),\R).  \]
In our case, this implies
\begin{equation*}
\mathrm{dim}\; H^j(\mathcal{L}\SS^3(-\mathbf{1},\1),\R)=\mathrm{dim}\; H^j(\mathcal{L}\SS^3(\mathbf{1},-\k),\R) \geq 
\begin{cases}
0 & j\;\mathrm{odd} \\
l+1 & j=2l, 
\end{cases}
\end{equation*}
and
\begin{equation*}
\mathrm{dim}\; H^j(\mathcal{L}\SS^3(\mathbf{1},-\1),\R)=\mathrm{dim}\; H^j(\mathcal{L}\SS^3(-\mathbf{1},\k),\R) \geq
\begin{cases}
0 & j\;\mathrm{odd} \\
l+1 & j=2l.
\end{cases}
\end{equation*}
But now Proposition~\ref{nico2} gives, for $k \geq 2$ even, an extra element $\hat{m}_{2k-2}$ in the cohomology of degree $2k-2$ for $\mathcal{L}\SS^3(-\mathbf{1},\1) \simeq \mathcal{L}\SS^3(\mathbf{1},-\k)$. Writing $j=2l$, this gives an extra element when $j=2l$ with $l$ odd, therefore 
\begin{equation*}
\mathrm{dim}\; H^j(\mathcal{L}\SS^3(-\mathbf{1},\1),\R) \geq 
\begin{cases}
0 & j\;\mathrm{odd} \\
l+2 & j=2l, \; l\;\mathrm{odd}  \\
l+1 & j=2l, \; l\;\mathrm{even}.
\end{cases}
\end{equation*}
Similarly, Proposition~\ref{nico2} gives, for $k \geq 2$ odd, an extra element $\hat{m}_{2k-2}$ in the cohomology of degree $2k-2$ for $\mathcal{L}\SS^3(\mathbf{1},-\1) \simeq \mathcal{L}\SS^3(-\mathbf{1},\k)$. Writing $j=2l$, this gives an extra element when $j=2l$ with $l$ even, and so
\begin{equation*}
\mathrm{dim}\; H^j(\mathcal{L}\SS^3(\mathbf{1},-\1),\R) \geq
\begin{cases}
0 & j\;\mathrm{odd} \\
l+1 & j=2l, \; l\;\mathrm{odd}  \\
l+2 & j=2l, \; l\;\mathrm{even}.
\end{cases}
\end{equation*}
This ends the proof.
\end{proof}

%%%%%%%%%%%%%%%%%%%%%%%%%%%%%%%%%%%%%%%%%%%%%%%%%%%%%%%%%%%%%%%%%%%%%%%%%%%%%%%%%%%%%%%%%%%%%%%%%%%%%%

\bigskip

\parindent=0pt
\parskip=0pt
\obeylines

Emília Alves and Nicolau C. Saldanha
emilia@mat.puc-rio, saldanha@puc-rio.br
Departamento de Matemática, PUC-Rio
Rua Marquês de São Vicente 225, Rio de Janeiro, RJ 22451-900, Brazil

\end{document}